\numberwithin{equation}{section}
\newtheorem{thm}{Theorem}[section]
\newtheorem{lem}[thm]{Lemma}
\newtheorem{pro}[thm]{Proposition}
\newtheorem{cor}[thm]{Corollary}
{\theorembodyfont{\rmfamily}
\newtheorem{defn}[thm]{Definition}
\newtheorem{examp}[thm]{Example}

\newtheorem{rmk}[thm]{Remark}
}
\newcommand{\qed}{\hfill \mbox{\raggedright \rule{.07in}{.1in}}}
\newenvironment{proof}{\vspace{1ex}\noindent{\bf
Proof}\hspace{0.5em}}{\hfill\qed\vspace{1ex}}
\newcommand{\Z}{\mathbb{Z}}
\newcommand{\R}{\mathbb{R}}
\newcommand{\N}{\mathbb{N}}
\newcommand{\ep}{\epsilon}
\title{Deterministically Driven Random Walks in a Random Environment on $\Z$.}
\author{Colin Little\thanks{Department of Mathematics, University of Surrey, Guilldford, GU2 7XH, UK.}}
\begin{document}

\maketitle

\begin{abstract}
We introduce the concept of a deterministic walk in a deterministic environment on a countable state space (DWDE). For the deterministic walk in a fixed environment we establish properties analogous to those found in Markov chain theory, but for systems that do \textbf{not} in general have the Markov property. In particular, we establish hypotheses ensuring that a DWDE on $\Z$ is either recurrent or transient. An immediate consequence of this result is that a \emph{symmetric} DWDE on $\Z$ is recurrent. Moreover, in the transient case, we show that the probability that the DWDE diverges to $+ \infty$ is either 0 or 1. In certain cases we compute the direction of divergence in the transient case.
\end{abstract}


\section{Introduction}
Our purpose is to provide a foundational background for the study of deterministically driven random walks in a random environment. Here we consider the simplest infinite state case: deterministically driven random walks in a random environment on $\Z$. 

Much attention in the area of deterministically driven random walks in a random environment has focused lately on applications to problems in randomly generated dynamical billiards. Among the most well-known examples of a dynamical billiard is that of a \emph{Lorentz gas}, which models the free motion of a point particle in Euclidean space $\R^d$ ($d \geq 2$) subject to elastic collisions with a fixed array of dispersing scatterers.

In an early development of the subject of dynamical billiards Sinai \cite{Sinai1970} showed that a finite horizon planar periodic Lorentz gas is recurrent and ergodic. Bunimovich, Sinai and Chernov \cite{BuChSi1981} proved the central limit theorem and weak invariance principle (WIP) for such systems, while Melbourne \& Nicol \cite{MelbNic2007} showed that almost every trajectory is approximable by a sample path of a Brownian motion. (This property is known as an \emph{almost sure invariance principle}.) In a recent development of the subject of deterministic walks in a random environment Dolgopyat et al \cite{DolSzVar2009} have shown that the WIP of a planar periodic Lorentz gas is (in some sense) robust to random perturbations of finite regions of an otherwise periodic arrangement of scatterers. One important feature of the results that we present here is that they do not require an asymptotic periodicity condition on the environment.

Another class of random dynamical billiards considers the situation in which a space (e.g. a strip in $\R^2$, $\R^2$, etc) is tessellated by countably many translated copies of the same polygon. A billiard is then constructed by randomly assigning a configuration of scatterers to each polygon. Once an environment of scatterers has been generated the resulting billiard dynamics are run. Such systems are known as \emph{random Lorentz gases} (or \emph{random Lorentz tubes} in the strip setting). Lenci et al \cite{CriLenSer2010, CriDegEpLenSer2011} show that (under certain hypotheses on the geometry of scatterer configurations) almost all realisations of a random 2-dimensional Lorentz tube are recurrent, and that (under somewhat more restrictive hypotheses) this can also be shown for higher dimensional cases. Lenci \& Troubetzkoy \cite{LenTro2011} identify very special hypotheses under which a given fixed 2-dimensional Lorentz tube is recurrent and ergodic, and for which the first return map is $K$-mixing, and they consider random Lorentz tubes whose typical realisation satisfies these hypotheses.

Examples of other (non-billiard) applications of deterministic walks in random environments can be found in Dolgopyat \cite{Dolgopyat2009}, and Simula \& Stenlund \cite{SimSten2009}.

We observe that a common feature of the above examples is that once an environment (of scatterers) has been generated, it remains unchanged thereafter. A different class of deterministically driven random walk in a random environment deals with situations in which the initial configuration of the environment is randomly chosen, and is then allowed to evolve over time. For example, Stenlund \cite{StenArx2012} considers the asymptotic behaviour of a random Sinai billiard on a 2-torus in which the position of a single scatterer is randomly updated after each collision (for which he establishes an almost sure invariance principle). We do not consider such systems here, but note (as Stenlund does) that the situation in which the environment is frozen is typically much harder to analyse than the situation in which it updates randomly after collisions.

In this work we are concerned with understanding the problem of recurrence and transience properties of a deterministic walk in a random environment on $\Z$. Our primary motivation lies in the fact that (to our knowledge) the existing dynamical systems literature does not address such questions in a systematic way. The secondary motivation consists in the hope that the results described here will provide a foundation upon which to tackle harder problems such as the billiard systems described above.

Our setup bears certain similarities to the now classical subject of \emph{random walks in a random environment} (RWRE), which considers ensembles of randomly generated Markov chains on a common state space. But unlike in that situation we will be concerned more generally with ensembles of dynamical systems that are deterministic in nature, and which do not have the Markov property. 

In Section \ref{sec:DWDEsetup}, we formally introduce our setup. By way of further motivation for the questions considered there, we here give a brief account of RWREs in the one-dimensional setting. 

Consider the following example of a RWRE on $\Z$. Let $(\alpha_n)_{n \in \Z}$ be a sequence of independent and identically distributed (i.i.d.) random variables taking values in $[0,1]$. For each realisation $(\alpha_n)_n$ of this process we define a Markov chain $(U_n)_{n \geq 0}$ on $\Z$ satisfying, (i) $P(U_0 = 0) = 1$ and (ii) for all $n \geq 0$ and $k \in \Z$, $P(U_{n+1} = k+1| U_n = k) = \alpha_k$ and $P(U_{n+1} = k-1| U_n = k) = 1-\alpha_k$. Each realisation $(\alpha_n)_n$ thus defines an \emph{environment} of transition probabilities on $\Z$, and as such the process $(\alpha_n)_{n}$ is called a \emph{random environment} on $\Z$. A setup of this kind is called a \emph{simple} RWRE on $\Z$. More complex setups for the RWRE have focused on situations where the environment is generated by different types of dynamics, and by allowing jumps other than $\pm 1$.

The first mathematical results for simple RWREs go back to the works of Koslov \cite{Koslov1973}, Kesten et al \cite{KeKS1975}, and Solomon \cite{Sol1975} in the mid-1970s. Since then a great deal has been established about the qualitative and quantitative properties of RWREs in the one dimensional setting. (Full and relatively modern surveys of RWREs, including the higher dimensional cases, can be found in \cite{Sznitman2004} and \cite{Zeitouni2001}.)

The seminal results in the area are due to Solomon \cite{Sol1975} and Sinai \cite{Sinai1982}. Solomon was the first to establish a criterion for classifying the (almost sure) pointwise asymptotic behaviour of the simple RWRE on $\Z$. In particular he showed this behaviour was completely determined by the sign of $E(\ln \frac{1-\alpha_0}{\alpha_0})$. Solomon also proved a Law of Large Numbers for simple RWREs, and showed the existence of transient simple RWREs for which the growth rate of the position of the walk is sub-linear. Sinai \cite{Sinai1982} famously showed that in the recurrent case the distributional behaviour of the simple RWRE exhibits anomalously sub-diffusive growth according to a $(\ln n)^2$ law.

Subsequent developments in the field have extended these results to more general settings. Key \cite{Key1984} obtained a more general classification result for the case of RWRE on $\Z$ in an i.i.d.\ environment for which jumps are uniformly bounded. Bolthausen \& Goldsheid \cite{BalGol2000} further extended the classification of RWREs on $\Z$ with bounded jumps to the situation in which the random environment is ergodic and stationary. Moreover, \cite{BalGol2000} considered the more general \emph{strip} model in which the state space is $k \times \Z$, for some $k \in \N$. Bremont \cite{Bremont2004}, under certain additional assumptions, further extended the classification of one dimensional RWREs with bounded jumps to the setting of a Gibbsian environment on a subshift of finite type. As yet, relatively little is known about the classification of higher dimensional RWREs. 

Taking our lead from the probability theory, and looking instead at deterministic dynamical systems, our goal is to establish results of a similar flavour to those obtained for RWREs on $\Z$. Typically these problems are a lot harder in the deterministic setting because very different assumptions regarding the dependence/independence of observables obtain to those that are prevalent in the probabilistic setting.

In preparation for our main results, in Section \ref{Sec:MarkovDistortion} we show that standard properties of Markov chains go over to Markov maps with good distortion properties.

In Section \ref{sec:DWDEsetup}, we formally introduce the notion of central interest: the \emph{deterministic walk in a deterministic environment} (DWDE). Our aim is to provide a foundational understanding of the asymptotic behaviour of a DWDE on $\Z$. In particular, we establish results of the flavour of Solomon \cite{Sol1975} in situations where the deterministic walk typically \textbf{does not} have the Markov property. In the finite state case (which is covered in \cite{Little2012b}) it is possible to prove recurrence under relatively mild assumptions on the underlying dynamics. In the infinite state case it transpires that additional properties such as good distortion and big images are required.

Our first main result (Theorem \ref{thm:Transience_TransitivityofOrbits}) identifies hypotheses that ensure that a deterministic walk on a general countable state space in a given fixed environment is recurrent with probability 0 or 1.

Building upon this, we establish zero-one laws for the asymptotic behaviour of the DWDE on $\Z$. Firstly (in Theorem \ref{thm:4Case0-1Law}), we show that under certain assumptions the DWDE on $\Z$ is recurrent with probability 0 or 1. Secondly (in Theorem \ref{thm:nosplitcase}), we identify conditions that ensure that, with probability 1, the DWDE on $\Z$ exhibits exactly one of three types of asymptotic behaviour: specifically, either (i) the DWDE on $\Z$ diverges to $+\infty$ with probability 1, (ii) the DWDE on $\Z$ diverges to $-\infty$ with probability 1, or (iii) the DWDE is recurrent with probability 1. It follows immediately from Theorem \ref{thm:nosplitcase} that if a DWDE on $\Z$ is \emph{symmetric} (in a sense to be made precise) then it is recurrent (Corollary \ref{cor:symmetricRecurrence}). 

In general, there does not a exist a sharp criterion for determining the transience or recurrence of a DWDE. However, for specific classes of DWDE on $\Z$ for which there is inherent bias in each possible choice of transition function, we make use of Theorem \ref{thm:nosplitcase} to show that such DWDEs are transient, and that they diverge in the expected direction.

\section{Markov Maps with Strong Distortion}\label{Sec:MarkovDistortion}

In this section we recall the concept of a \emph{Markov map}, and the related property of \emph{Strong Distortion}. In preparation for our main results, we establish some useful properties Markov maps with Strong Distortion.

\begin{defn} Recall that a non-singular transformation $T$ of a measure space $(X, m)$ is said to be \emph{Markov}, with a measurable \emph{Markov partition} $\beta$ of $X$, if
\begin{itemize}
\item[(i)] for all $a \in \beta$, $T(a)$ is the union of elements of $\beta$,
\item[(ii)] $T_{|a}: a \rightarrow T(a)$ is a bijection.
\end{itemize}
Given a Markov map $T$ with with Markov partition $\beta$, for all $n \geq 1$ and all sequences $a_0, \ldots, a_{n-1} \in \beta$, we call the set $[a_0, \ldots, a_{n-1}] := \cap_{j=0}^{n-1}T^{-j}a_j$ a \emph{cylinder set of rank} $n$ or an $n$-\emph{cylinder}. 

For a given cylinder set $a$, we let $|a|$ denote its rank. 

We denote by $\beta_n$ the collection of all $n$-cylinders. 

We say that a cylinder set $a$ is \emph{admissible} if $m(a) > 0$.
\end{defn}

In Definition \ref{def:MarkovCommunication} and Definition \ref{def:MarkovRecTrans} below we introduce concepts for Markov maps that are analogous to related concepts for Markov chains. 

\begin{defn}\label{def:MarkovCommunication}
Let $T$ be a Markov transformation of a measure space $(X, m)$ with Markov partition $\beta$. Given $a, b \in \beta$ we say that $a$ \emph{communicates with} $b$ if
\[
m(\{x \in a: T^nx \in b~\mathrm{for~some}~n \geq 1\}) > 0.
\]
Given $a, b \in \beta$, we say that $a$ and $b$ \emph{intercommunicate} if $a$ communicates with $b$ and $b$ communicates with $a$.

Given $a \in \beta$ we define the \emph{communication class of} $a$ to be the set of all $b \in \beta$ with which it intercommunicates. (We observe that intercommunication is an equivalence relation.)

We say that a communication class $\mathcal{C}$ is \emph{closed} if for all $a \in \mathcal{C}$ and all $b \in \beta$, $a$ communicates with $b$, only if $b$ communicates with $a$. (When $\beta$ is finite, the existence of closed communication classes is automatic.)

We say that $\beta$ is \emph{irreducible} if $\beta$ is itself a communication class under $T$.
\end{defn}

\begin{defn}\label{def:MarkovRecTrans}

We say that a partition element $a \in \beta$ is \emph{recurrent} if for a.e.\ $x \in a$ there exists $n \geq 1$ such that $T^nx \in a$. It follows from the non-singularity of $T$ that if $a$ is recurrent then for a.e.\ $x \in a$, $T^nx \in a$ infinitely often. 

We say that $a \in \beta$ is \emph{transient} if it is not recurrent.

We say that a communication class $\mathcal{C}$ is \emph{transitive} if for all $a, b \in \mathcal{C}$, and for a.e.\ $x \in a$, there exists $n \geq 1$ such that $T^nx \in b$. We say that $T$ is \emph{transitive} if the Markov partition $\beta$ forms a transitive communication class.
\end{defn}

\begin{defn}\label{def:StrongDistortion}
Let $T: X \rightarrow X$ be a Markov transformation of a $\sigma$-finite measure space $(X, m)$, with Markov partition $\beta$. 
By the non-singularity of $T$, for all $n \geq 1$ and all $a \in \beta_n$, we may define the Radon-Nikodym derivative $v'_a := \frac{d(m \circ T^{-n})}{dm}$ such that for every measurable set $B$, $\int_{T^na \cap B} v'_a dm = m(a \cap T^{-n}B)$.

We say that $T$ has the \emph{Strong Distortion Property}, with distortion constant $D \geq 1$, if for all $n \geq 1$ and for all $a \in \beta_n$, and for a.e.\ $x, y \in T^n a$, $\frac{v'_a(x)}{v'_a(y)} \leq D$.
\end{defn}

We now give some useful properties of Markov maps with strong distortion. Throughout we assume that $T: X \rightarrow X$ is a Markov transformation of a $\sigma$-finite measure space $(X, m)$ with Markov partition $\beta$, such that $m(a) < \infty$ for all $a \in \beta$, and that $T$ has the Strong Distortion Property. 

The following result derives from \cite[Proposition 4.3.1]{Aaronson1997}.

\begin{pro}\label{pro:GibbsPropimpliesLeakage}
For all $a \in \beta$, $m(Ta) < \infty$ and there exists $C \geq 1$ such that for all $n \geq 1$, $a \in \beta_n$, and any measurable set	$B$
\begin{equation}\label{eqn:Distortion}
C^{-1} \frac{m(B \cap T^na)}{m(T^n a)} \leq \frac{m(a \cap T^{-n}B)}{m(a)} \leq C \frac{m(B \cap T^na)}{m(T^n a)}.
\end{equation}
\end{pro}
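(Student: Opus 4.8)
The plan is to unpack the Strong Distortion Property and derive the two-sided comparison \eqref{eqn:Distortion} from it, with the distortion constant $D$ furnishing the constant $C$. The starting point is the defining identity for the Radon-Nikodym derivative: for $a \in \beta_n$ and any measurable set $B$,
\[
m(a \cap T^{-n}B) = \int_{T^n a \cap B} v'_a \, dm.
\]
Applying this with $B = X$ (equivalently $B = T^n a$, since $a \subseteq T^{-n}(T^n a)$ up to null sets by the Markov bijection property) gives $m(a) = \int_{T^n a} v'_a \, dm$. Thus both $m(a \cap T^{-n}B)$ and $m(a)$ are integrals of the same density $v'_a$ over $T^n a \cap B$ and over $T^n a$ respectively.

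Next I would exploit that Strong Distortion makes $v'_a$ almost constant on $T^n a$: the hypothesis $\frac{v'_a(x)}{v'_a(y)} \le D$ for a.e.\ $x,y \in T^n a$ means that if we set $M = \mathrm{ess\,sup}\, v'_a$ and $\mu = \mathrm{ess\,inf}\, v'_a$ over $T^n a$, then $M \le D\mu$. From this I get the sandwich estimates
\[
\mu \, m(T^n a \cap B) \le \int_{T^n a \cap B} v'_a \, dm \le M \, m(T^n a \cap B)
\]
and likewise $\mu \, m(T^n a) \le m(a) \le M \, m(T^n a)$. Dividing the inequality for $m(a \cap T^{-n}B)$ by the inequality for $m(a)$, the factors $M$ and $\mu$ cross over and leave a factor of $M/\mu \le D$ on one side and $\mu/M \ge D^{-1}$ on the other, yielding \eqref{eqn:Distortion} with $C = D$. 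The finiteness claim $m(Ta) < \infty$ follows because $m(a) \ge \mu \, m(Ta) $ for $a \in \beta$ (the rank-one case $n=1$), so $m(Ta) \le m(a)/\mu < \infty$ provided $\mu > 0$; one argues that the essential infimum of $v'_a$ is strictly positive, since otherwise $m(a) = \int_{Ta} v'_a\,dm$ combined with the bounded-ratio condition would force $v'_a \equiv 0$ a.e.\ and hence $m(a) = 0$, contradicting $m(a) < \infty$ being the mass of a genuine partition element (and the non-singularity of $T$).

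The main obstacle I anticipate is the careful handling of the essential supremum and infimum in the $\sigma$-finite setting, ensuring they are finite and positive so that the division step is legitimate; degenerate cases where $v'_a$ could vanish or blow up on a positive-measure subset of $T^n a$ must be excluded using non-singularity together with the hypothesis $m(a) < \infty$. A secondary technical point is confirming that the ratio bound $\frac{v'_a(x)}{v'_a(y)} \le D$, stated for a.e.\ pair, indeed controls $M/\mu$; this is a standard measure-theoretic fact (the essential sup and inf are approached on positive-measure sets, so one can choose the points $x,y$ within the almost-everywhere set where the ratio bound holds), but it deserves a careful word. Since the proposition is attributed to \cite[Proposition 4.3.1]{Aaronson1997}, I expect the actual proof to cite or lightly adapt that reference rather than reproduce these estimates in full, so the real work is just verifying that the Strong Distortion Property supplies the hypothesis of Aaronson's result.
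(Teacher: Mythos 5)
Your argument is correct, but note that the paper does not actually prove this proposition at all: it simply states that the result ``derives from'' Aaronson's Proposition 4.3.1 and moves on, so your self-contained derivation is doing more work than the source. The derivation itself is the standard one and it goes through: from $\int_{T^n a \cap B} v'_a\,dm = m(a\cap T^{-n}B)$ and the case $B = X$ you get $m(a) = \int_{T^n a} v'_a\,dm$, the a.e.\ ratio bound gives $\operatorname{ess\,sup} v'_a \le D\,\operatorname{ess\,inf} v'_a$ on $T^n a$ (your Fubini-style remark about choosing $x,y$ in the full-measure set is exactly the right care to take), and dividing the two sandwich estimates yields \eqref{eqn:Distortion} with $C = D$. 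Two small points of precision. First, in your argument for $m(Ta)<\infty$ the contradiction you want is with $m(a)>0$, not with $m(a)<\infty$; positivity of $m(a)$ is not stated in the standing hypotheses but is implicitly required anyway, since $m(a)$ appears in the denominators of \eqref{eqn:Distortion} (and for null partition elements the claim is vacuous). A cleaner route to $\operatorname{ess\,inf} v'_a>0$ is: $M:=\operatorname{ess\,sup} v'_a>0$ because $\int_{T^n a}v'_a\,dm=m(a)>0$, and then $\operatorname{ess\,inf} v'_a\ge M/D>0$ directly from the ratio bound. Second, for $a\in\beta_n$ with $n\ge 2$ you should observe that $T^n a = T a_{n-1}$ for the last symbol $a_{n-1}\in\beta$, so its finiteness follows from the $n=1$ case; otherwise the right-hand side of \eqref{eqn:Distortion} is not obviously meaningful. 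With those touches your proof stands on its own, which is arguably preferable to the paper's bare citation, since it makes explicit that the Strong Distortion Property as defined here really does supply the hypothesis of Aaronson's result.
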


\begin{lem}\label{lem:Recimp_ioRec}
If $a \in \beta$ is recurrent, then $T^r x \in a$ $\mathrm{i.o.}$ for a.e.\ $x \in a$.
\end{lem}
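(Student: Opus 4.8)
The plan is to reduce the statement to the single measure-theoretic fact that, under a non-singular map, a null set has null preimages. First I would introduce the set of points of $a$ that never return,
\[
N := \{x \in a : T^n x \notin a \text{ for all } n \geq 1\},
\]
so that, by Definition \ref{def:MarkovRecTrans}, the recurrence of $a$ is exactly the assertion $m(N) = 0$. The goal is then to upgrade this to the statement that the \emph{bad} set
\[
G := \{x \in a : T^n x \in a \text{ for only finitely many } n \geq 1\}
\]
is also null; together with $m(N)=0$ this yields that for a.e.\ $x \in a$ one has $T^r x \in a$ infinitely often.

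The key step is a last-return-time decomposition of $G$. For $x \in G \setminus N$ the set of return times $\{n \geq 1 : T^n x \in a\}$ is finite and non-empty, hence has a largest element $L(x)$. By maximality, $T^{L(x)}x \in a$ while $T^m(T^{L(x)}x) \notin a$ for every $m \geq 1$; that is, $T^{L(x)} x \in N$. Consequently
\[
G \subseteq N \cup \bigcup_{n \geq 1}\left(a \cap T^{-n}N\right).
\]
Now I would invoke non-singularity: since $m(N) = 0$, we have $m(T^{-1}N) = 0$, and inductively $m(T^{-n}N) = 0$ for every $n \geq 1$. Summing over $n$ gives $m(G) = 0$, and the lemma follows.

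The argument is short and, notably, does not require the Strong Distortion Property, since only non-singularity and $\sigma$-finiteness are used; this is consistent with the remark recorded immediately after Definition \ref{def:MarkovRecTrans}. The only points that require care are the bookkeeping in the last-return decomposition (checking that the displayed containment really captures every point that returns at least once but only finitely often) and the explicit appeal to the fact that non-singularity propagates null sets \emph{backwards} along $T$. I do not anticipate a genuine obstacle here; the value of isolating this as a lemma is to make the passage from ``returns at least once'' to ``returns infinitely often'' available in the cleanest possible form for the later arguments.
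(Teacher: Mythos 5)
Your proof is correct and follows essentially the same route as the paper: both identify the never-return set as null by the definition of recurrence, observe via a last-return decomposition that the points returning only finitely often lie in $a \cap \bigcup_{n \geq 0} T^{-n}$ of that null set, and conclude by non-singularity. Your write-up just makes the last-return bookkeeping more explicit than the paper does.
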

\begin{proof}
Let $a' := \{x \in a: T^r x \notin a, \forall r \geq 1\}$. By assumption, $m(a') = 0$. Defining $E := \{x \in a: T^nx \in a~\mathrm{i.o.}\}$, it follows that $E = a \backslash (a \cap \cup_{n=0}^{\infty}T^{-n}a')$. Since $T$ is non-singular it follows that $m(E) = m(a) - m(a \cap \cup_{n=0}^{\infty}T^{-n}a') = m(a)$. 
\end{proof}

\begin{lem}\label{lem:Trans_iozero}
If $a \in \beta$ is transient, then $m(\{x \in X: T^n(x) \in a~\mathrm{i.o.} \}) = 0$.
\end{lem}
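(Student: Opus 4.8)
The plan is to follow the Markov-chain intuition that a transient state is visited only finitely often almost surely, and to realise this through a uniform lower bound on the probability of never returning, supplied by the Strong Distortion Property. I would first reduce the statement to a return estimate \emph{within} $a$: writing $B := \{x \in a : T^n x \in a~\mathrm{i.o.}\}$, it suffices to prove $m(B) = 0$ and then to bootstrap to all of $X$. For the bootstrap, note that every $x$ with $T^n x \in a$ infinitely often must first enter $a$ at some time $n_0 \geq 0$, and then $T^{n_0} x \in B$; hence $A := \{x \in X : T^n x \in a~\mathrm{i.o.}\} \subseteq \bigcup_{n \geq 0} T^{-n} B$, and the non-singularity of $T$ gives $m(A) \leq \sum_n m(T^{-n} B) = 0$ once $m(B) = 0$ is known.

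The heart of the argument is the estimate $m(B) = 0$. Set $a' := \{x \in a : T^r x \notin a~\forall r \geq 1\}$; since $a$ is transient, $m(a') > 0$, and by non-singularity $m(Ta') > 0$, while $m(Ta) < \infty$ by Proposition \ref{pro:GibbsPropimpliesLeakage}. I would decompose, for each $k \geq 1$, the set $B_k := \{x \in a : x \text{ returns to } a \text{ at least } k \text{ times}\}$ into a disjoint union of cylinders $c$, each recording the full itinerary up to and including the $k$-th return time $N$ (so $c \subseteq a$ has rank $N+1$ with symbol $a$ in its last coordinate, whence $T^{N+1} c = Ta$). The key observation is that the points of $c$ that never return to $a$ after time $N$ are precisely $c \cap T^{-(N+1)}(Ta')$, because $Ta' = \{y \in Ta : T^j y \notin a~\forall j \geq 0\}$. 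Applying the lower inequality in (\ref{eqn:Distortion}) to the cylinder $c$ with the set $Ta'$ then yields
\[
\frac{m\big(c \cap T^{-(N+1)}(Ta')\big)}{m(c)} \;\geq\; C^{-1}\,\frac{m(Ta' \cap Ta)}{m(Ta)} \;=\; C^{-1}\,\frac{m(Ta')}{m(Ta)} \;=:\; \delta \;>\; 0,
\]
a bound \emph{uniform} over all such cylinders $c$ (and hence over $k$), since the distortion constant $C$ does not depend on the rank.

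With this in hand the conclusion follows by a geometric decay argument: summing $m(B_{k+1} \cap c) \leq (1-\delta)\,m(c)$ over the cylinders $c$ comprising $B_k$ gives $m(B_{k+1}) \leq (1-\delta)\,m(B_k)$, and since $m(B_1) \leq m(a) < \infty$ we obtain $m(B_k) \leq (1-\delta)^{k-1} m(a) \to 0$. As $B = \bigcap_k B_k$ is a decreasing intersection of sets of finite measure, $m(B) = \lim_k m(B_k) = 0$, completing the reduction.

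I expect the main obstacle to be purely bookkeeping: organising $B_k$ into the correct family of cylinders so that the $k$-th return time is genuinely constant on each $c$, and verifying the two image identities $T^{N+1} c = Ta$ and $Ta' = \{y \in Ta : T^j y \notin a~\forall j \geq 0\}$ that let the escape event be written as the single pull-back $c \cap T^{-(N+1)}(Ta')$. This identification is exactly the point at which the Strong Distortion Property is invoked, and it is also where one must confirm $\delta > 0$, using $m(Ta) < \infty$ from Proposition \ref{pro:GibbsPropimpliesLeakage} together with $m(Ta') > 0$ from transience and non-singularity. Everything else is routine measure theory.
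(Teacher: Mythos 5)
Your proposal is correct and follows essentially the same route as the paper: the same sets $a'$ and $Ta'$, the same decomposition of the $k$-fold-return set into itinerary cylinders ending in $a$ (the paper's $D_r$), the same application of the distortion inequality (\ref{eqn:Distortion}) with $B = Ta'$ to obtain a uniform escape probability $\delta = C^{-1}m(Ta')/m(Ta)$, and the same geometric decay plus non-singularity bootstrap to all of $X$. The bookkeeping points you flag (constancy of the return time on each cylinder, $T^{N+1}c = Ta$, and the identification of the escape event with $c \cap T^{-(N+1)}(Ta')$) all check out exactly as in the paper's argument.
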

\begin{proof}
Let $a' := \{x \in a: T^n x \notin a, \forall n \geq 1\} = a \cap (\bigcap_{n=1}^{\infty} T^{-n}a^c)$. Thus, the set $a'$ contains those points in $a$ whose orbits immediately leave $a$ and never return. Clearly, $a'$ is measurable and, by assumption, $m(a') > 0$. For all $r \geq 1$, we define sets
\[
E_r := \{x \in a: \exists~ 0 < n_1 < \ldots < n_r: T^{n_j}(x) \in a,~\mathrm{for}~ j = 1, \ldots, r\}.
\]
Thus, $E_r$ denotes the set of points $x \in a$ that make at least $r$ return visits to $a$. 

Consider a cylinder $w = [w_0, \ldots, w_{n-1}]$ that contains exactly $r+1$ occurrences of the symbol $a$ and for which $w_0 = w_{n-1} = a$. We define $D_r$ to be the collection of all such cylinders. It follows that $E_r = \cup_{w \in D_r} w$, and that $E_r$ is therefore measurable. Since $Ta' \subset T^{|w|}w$, it follows from Proposition \ref{pro:GibbsPropimpliesLeakage} that there exists $C \geq 1$ such that for each cylinder $w \in D_r$, 
\begin{equation}\label{ineqn:leak1}
\frac{m(w \cap T^{-|w|} (Ta'))}{m(w)} \geq C^{-1} \frac{m(Ta')}{m(T^{|w|}w)} = C^{-1} \frac{m(Ta')}{m(Ta)}.
\end{equation}
We define $\delta := \frac{m(Ta')}{Cm(Ta)}$. From Proposition \ref{pro:GibbsPropimpliesLeakage} we have that $m(Ta) < \infty$, and since $m(a') > 0$, and $T$ is non-singular, it follows that $m(Ta') > 0$, and so $\delta > 0$. From (\ref{ineqn:leak1}) it follows that for all cylinders $w \in D_r$, $m(w \cap (E_{r+1})^c) \geq \delta m(w)$. Taking unions over all cylinders $w \in D_r$, we have
\begin{equation}
m(E_r \backslash E_{r+1}) = m(\bigcup_{w \in D_r} w \cap (E_{r+1})^c) \geq \delta m(\bigcup_{w \in D_r} w) = \delta m(E_r).
\end{equation}
Since $E_{r+1} \subset E_r$, it follows that 
\[
m(E_r) - m(E_{r+1}) \geq \delta m(E_r).
\]
This establishes
\begin{equation}\label{eqn:DecayofCr1}
m(E_{r+1}) \leq (1-\delta)m(E_r).
\end{equation}
For all $r \in 1$, $\{x: T^n(x) \in a~\mathrm{i.o.} \} \subset E_{r+1} \subset E_r$,  and so 
\[
m(\{x \in a: T^n(x) \in a~\mathrm{i.o.} \}) \leq m(E_{r+1}) \leq (1-\delta)m(E_r) \leq (1-\delta)^rm(a).
\]
Thus $m(\{x \in a: T^n(x) \in a~\mathrm{i.o.} \}) = 0$. It follows from the non-singularity of $T$ that for all $b \in \beta$, $m(\{x \in b: T^n(x) \in a~~\mathrm{i.o.}\}) = 0$, and therefore that $m(\{x \in X: T^n(x) \in a~~\mathrm{i.o.}\}) = 0$. 
\end{proof}

\begin{lem}\label{lem:EquivRec}
If $a \in \beta$ is recurrent, then for all $b$ that lie in the same communication class as $a$, $b$ is recurrent.
\end{lem}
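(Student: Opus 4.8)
The plan is to argue by contradiction, showing that if $b$ were transient then $a$ could not be recurrent. Suppose $b$ is transient. By Lemma~\ref{lem:Trans_iozero} we have $m(\{x \in X : T^n x \in b~\mathrm{i.o.}\}) = 0$, so a.e.\ orbit visits $b$ only finitely often. On the other hand, since $a$ is recurrent, Lemma~\ref{lem:Recimp_ioRec} gives that a.e.\ $x \in a$ satisfies $T^n x \in a$ i.o. The idea is to exploit the communication $a \to b$ (which holds because $a$ and $b$ lie in a common communication class) together with the Strong Distortion Property to show that these infinitely many returns to $a$ force infinitely many visits to $b$ on a set of positive measure, contradicting the first display.

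First I would record a uniform ``leakage to $b$'' estimate, in the spirit of (\ref{ineqn:leak1}). Since $a$ communicates with $b$, there is some $N \ge 1$ with $m(S) > 0$, where $S := \{y \in a : T^N y \in b\}$. Applying Proposition~\ref{pro:GibbsPropimpliesLeakage} to any admissible cylinder $w$ of rank $k$ whose final symbol is $a$ (so that $T^{k-1}$ maps $w$ bijectively onto $a$), I would obtain a constant $\delta_0 > 0$, depending only on $a$, $b$, $N$, such that $m(w \cap T^{-(k-1+N)} b) \ge \delta_0\, m(w)$; that is, a definite proportion of each such cylinder reaches $b$ exactly $N$ steps after its terminal visit to $a$. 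To chain these estimates I would fix, along a.e.\ orbit in $a$ that returns to $a$ infinitely often, a sequence of return times $\sigma_0 < \sigma_1 < \cdots$ to $a$ that are spaced more than $N$ apart (possible since there are infinitely many returns), and call the $j$-th attempt a \emph{success} if $T^{\sigma_j + N} x \in b$. Decomposing the set $V_r$ of points that make at least $r$ returns but whose first $r$ attempts all fail into the cylinders ending at the $r$-th return time, the estimate above yields $m(V_{r+1}) \le (1-\delta_0)\, m(V_r)$, exactly as in the passage leading to (\ref{eqn:DecayofCr1}). Hence $m(V_r) \le (1-\delta_0)^r m(a) \to 0$, so a.e.\ $x \in a$ has at least one successful attempt, i.e.\ visits $b$ at least once.

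To promote this to infinitely many visits, I would note that the preceding decay estimate is uniform: the same computation, applied to an arbitrary admissible cylinder $w_\ast$ terminating in $a$, shows that the conditional probability of never again succeeding after $w_\ast$ is $0$. Summing over the countably many such cylinders and over the (finite) index of a hypothetical last success then forces the number of successful attempts to be infinite for a.e.\ $x \in a$, so that $T^n x \in b$ i.o.\ on a set of measure $m(a) > 0$, contradicting $m(\{x : T^n x \in b~\mathrm{i.o.}\}) = 0$. The main obstacle is the chaining step: the distortion bound of Proposition~\ref{pro:GibbsPropimpliesLeakage} controls the conditional proportion reaching $b$ only relative to a cylinder fixed up to the current return time, so the attempts must be arranged on disjoint time-windows (hence the spacing $> N$ of the $\sigma_j$) for the one-step bound $1-\delta_0$ to compound into the geometric estimate; keeping the bookkeeping of these stopping times measurable and cylinder-adapted, and correctly upgrading ``once'' to ``infinitely often'', is where the care is needed.
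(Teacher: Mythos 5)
Your argument is correct and follows essentially the same route as the paper: a uniform ``leakage into $b$'' bound from Proposition \ref{pro:GibbsPropimpliesLeakage} applied to cylinders terminating in $a$, chained into geometric decay of the measure of points that keep returning to $a$ without visiting $b$, upgraded to infinitely many visits, and closed off by (the contrapositive of) Lemma \ref{lem:Trans_iozero}. The only cosmetic difference is that the paper targets a fixed cylinder $u=[d_1,\ldots,d_k,b]\subset Ta$ and normalises by $m(Ta)$, so the one-sided inequality of Proposition \ref{pro:GibbsPropimpliesLeakage} applies verbatim, whereas your target $S=a\cap T^{-N}b$ conditions one step earlier (on the return to $a$ itself) and therefore needs both sides of that inequality, or Definition \ref{def:StrongDistortion} directly, to make $\delta_0$ uniform over the varying image $m(Tw_{k-2})$.
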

\begin{proof}
Suppose that $a$ is recurrent, and fix $b$ such that $a$ and $b$ intercommunicate. Define 
\[
E := \{x \in a: T^n(x) \in a~\mathrm{i.o.} ~\&~ \forall n \geq 1,~ T^n(x) \notin b\}.
\]
We first show that 
\begin{equation}\label{eqn:equivofRecurrence}
m(E) = 0.
\end{equation}
Since $a$ communicates with $b$ there exists a cylinder 
\[
u := [d_1, \ldots , d_k, b]
\]
such that $a \cap T^{-1}u$ is admissible. For $r \geq 1$, define the sets
\begin{eqnarray*}
E_r := \{x \in a: \exists~ 0 < n_1 < \ldots < n_r: T^{n_j}(x) \in a, ~\mathrm{for}~ j = 1, \ldots, r, & ~\&~ & \\ T^s(x) \notin b, ~\mathrm{for}~s = 0, \ldots, n_r)\}.~~~~~~~~~~~~~~~~~~& & \qquad
\end{eqnarray*}
Thus $E_r$ denotes the set of points $x \in a$ that make at least $r$ return visits to $a$ before visiting $b$, if ever. For $r \geq 1$, recall from the proof of Lemma \ref{lem:Trans_iozero} the sets $D_r$ and define the collection of cylinder sets
\[
D_{r,b} := \{w \in D_r: w_j \neq b,~\forall~j = 0, \ldots , |w|-1\}.
\]
Observe that $E_r = \cup_{w \in D_{r,b}} w$. Therefore, $E_r$ is measurable. It follows from Proposition \ref{pro:GibbsPropimpliesLeakage} that there exists $C \geq 1$ such that for each $w \in D_{r,b}$
\begin{equation}\label{eqn:leak1}
\frac{m(w \cap T^{-|w|} u)}{m(w)} \geq C^{-1}\frac{m(u)}{m(Ta)}.
\end{equation}
Defining $\delta := C^{-1}\frac{m(u)}{m(Ta)}$, a similar argument to that employed in Lemma \ref{lem:Trans_iozero} establishes that $m(E_{r+1}) \leq (1-\delta)m(E_r)$. For all $r \geq 1$, $E \subset E_{r+1} \subset E_r$,  and so 
\[
m(E) \leq m(E_{r+1}) \leq (1-\delta)m(E_r) \leq (1-\delta)^rm(a),
\]
from which (\ref{eqn:equivofRecurrence}) now follows. 

Letting $D := \{x \in a: T^n(x) \in a ~\mathrm{i.o.} ~\&~ \exists N \forall n \geq N, T^n(x) \notin b\}$, it follows that $D \subset a \cap (\cup_{n=0}^{\infty}T^{-n}E)$.  Since $T$ is non-singular it follows from (\ref{eqn:equivofRecurrence}) that $m(D) = 0$. Since $a$ is recurrent it follows from Lemma \ref{lem:Recimp_ioRec} that $T^rx \in b$ i.o. for a.e.\ $x \in a$.
 
Since $b$ communicates with $a$, there is a positive measure subset of $b$ whose orbits enter $a$, and therefore return to $b$ infinitely often. By Lemma \ref{lem:Trans_iozero} it is now immediate that $b$ is recurrent.
\end{proof}

From Lemma \ref{lem:EquivRec} we obtain the following result.
\begin{lem}\label{cor:Transience}
Let $\mathcal{C}$ be a communication class. Then either (i) every $b \in \mathcal{C}$ is recurrent, or (ii) every $b \in \mathcal{C}$ is transient.
\end{lem}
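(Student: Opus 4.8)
The plan is to obtain this as an essentially immediate consequence of Lemma \ref{lem:EquivRec} by a dichotomy on whether the class contains any recurrent element. First I would split into two exhaustive and mutually exclusive cases: either there exists some $a \in \mathcal{C}$ that is recurrent, or there does not.

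In the first case, fix a recurrent $a \in \mathcal{C}$. The key observation is that, by the definition of a communication class (Definition \ref{def:MarkovCommunication}), every $b \in \mathcal{C}$ intercommunicates with $a$ and hence lies in the same communication class as $a$. Lemma \ref{lem:EquivRec} then applies directly: since $a$ is recurrent, every $b$ in its communication class is recurrent. As $\mathcal{C}$ is precisely that communication class, every $b \in \mathcal{C}$ is recurrent, which is alternative (i). In the second case, no element of $\mathcal{C}$ is recurrent, so by the definition of transience (Definition \ref{def:MarkovRecTrans}, where transient simply means not recurrent) every $b \in \mathcal{C}$ is transient, which is alternative (ii).

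Since the two cases are exhaustive and yield precisely (i) and (ii) respectively, the proof is complete. I do not anticipate any genuine obstacle here: all of the analytic content — the non-singularity and Strong Distortion estimates, and the geometric-decay argument bounding the measure of points returning $r$ times — has already been absorbed into Lemma \ref{lem:EquivRec}. The only point worth stating carefully is the conceptual one that motivates isolating this as a separate claim: recurrence is a priori an attribute of an individual partition element rather than of a class, and Lemma \ref{lem:EquivRec} is exactly the ingredient needed to promote it to a class invariant, so that once the trivial case split is performed the conclusion follows with no further work.
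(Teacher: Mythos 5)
Your proposal is correct and matches the paper's intent exactly: the paper presents this lemma as an immediate consequence of Lemma \ref{lem:EquivRec} (offering no further argument), and your dichotomy on whether $\mathcal{C}$ contains a recurrent element, combined with the fact that transient simply means not recurrent, is precisely the routine step being left implicit.
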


In consequence of Lemma \ref{cor:Transience} we may say that a communication class is either \emph{recurrent} or \emph{transient}. From the proof of Lemma \ref{lem:EquivRec}, we have the following.

\begin{lem}\label{cor:Transitivity}
A communication class is recurrent if and only if it is transitive.
\end{lem}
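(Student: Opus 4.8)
The plan is to prove the two implications separately, noting that the substantive work has essentially already been carried out inside the proof of Lemma \ref{lem:EquivRec}. Throughout, let $\mathcal{C}$ be a communication class; by Lemma \ref{cor:Transience} the term \emph{recurrent} is unambiguous when applied to $\mathcal{C}$, since either every element of $\mathcal{C}$ is recurrent or every element is transient.

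First I would dispatch the direction ``transitive $\Rightarrow$ recurrent'', which is immediate from the definitions. Suppose $\mathcal{C}$ is transitive and fix any $a \in \mathcal{C}$. Applying the transitivity condition of Definition \ref{def:MarkovRecTrans} in the case $b = a$ yields that for a.e.\ $x \in a$ there exists $n \geq 1$ with $T^n x \in a$; but this is precisely the statement that $a$ is recurrent. Since $a \in \mathcal{C}$ was arbitrary, every element of $\mathcal{C}$ is recurrent, and hence $\mathcal{C}$ is recurrent.

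For the converse ``recurrent $\Rightarrow$ transitive'', I would extract the relevant intermediate step from the proof of Lemma \ref{lem:EquivRec}. Fix $a, b \in \mathcal{C}$. Since $\mathcal{C}$ is recurrent, $a$ is recurrent, and since $a$ and $b$ lie in the same communication class they intercommunicate---so the hypotheses of Lemma \ref{lem:EquivRec} are met. The argument there shows, via the geometric decay estimate $m(E_{r+1}) \leq (1-\delta)m(E_r)$ together with an appeal to Lemma \ref{lem:Recimp_ioRec}, that $T^r x \in b$ infinitely often for a.e.\ $x \in a$. In particular, for a.e.\ $x \in a$ there exists $n \geq 1$ with $T^n x \in b$, which is exactly the transitivity condition. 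As $a, b \in \mathcal{C}$ were arbitrary, $\mathcal{C}$ is transitive.

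All of the genuine content resides in the converse, and since it is already established within the proof of Lemma \ref{lem:EquivRec}, the only task is to observe that the conclusion ``$T^r x \in b$ i.o.\ for a.e.\ $x \in a$'' is obtained there \emph{before} the recurrence of $b$ is deduced, and that this conclusion is strictly stronger than the transitivity requirement. I therefore expect no obstacle beyond making this citation precise; in particular, no new distortion or big-images estimate is needed, as the decay inequality driving the proof of Lemma \ref{lem:EquivRec} already supplies everything required.
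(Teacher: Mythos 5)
Your proposal is correct and follows essentially the same route as the paper, which itself offers no separate argument but simply remarks that the lemma follows from the proof of Lemma \ref{lem:EquivRec}. You have correctly identified that the substantive direction (recurrent $\Rightarrow$ transitive) is the intermediate conclusion ``$T^r x \in b$ i.o.\ for a.e.\ $x \in a$'' already established there, and that the converse is immediate from Definition \ref{def:MarkovRecTrans} by taking $b = a$.
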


\section{Introducing Deterministic Walks in a Deterministic Environment}\label{sec:DWDEsetup}

We formally define the \emph{deterministic walk} and those of its properties that are of primary interest: specifically, the \emph{transience}, \emph{recurrence} and \emph{transitivity} of the deterministic walk on its state space.

\begin{defn}\label{def:DetWalk}
Let $T$ be a measurable transformation of a probability space $(X, m)$, and let $S$ be a countable set such that associated with each element $i \in S$ is a measurable function $f_i: X \rightarrow S$. We call each $f_i$ a \emph{transition function}, and we call the collection $(f_k)_{k \in S}$ an \emph{environment} on $S$. 

Define the skew-product transformation $T_f: X \times S \rightarrow X \times S$ by
\begin{equation}\label{eq:skewprod}
T_f(x, i) := (Tx, f_i(x)).
\end{equation}
Define the \emph{deterministic walk on} $S$ \emph{in the environment} $(f_k)_{k \in S}$ to be
\begin{equation}\label{eq:detwalk}
U_n := U_{i,n}(x) := \pi_2(T_f^n(x, i))
\end{equation} 
where $\pi_2(x,y) := y$. 
\end{defn}

We will often refer to the set $S$ as either the \emph{state space} or the \emph{fibre}. We will also refer to the probability space $(X,m)$ as the \emph{base}, and the map $T:X \rightarrow X$ as the \emph{base map}.

The following properties of a deterministic walk are fundamental to the exposition of our main results.

\begin{defn}\label{def_RecTransofDW}
Given a deterministic walk on a countable state space $S$ in an environment $(f_k)_{k \in S}$, we say that a state $i \in S$ is \emph{recurrent} if 
\[
m(\{x \in X: U_{i,n}(x) = i, \mathrm{for~some}~n \geq 1\}) = 1. 
\]
We say that a state $i$ is \emph{transient} if it is not recurrent. 

We say that the deterministic walk is \emph{transitive on} $S$ if for all $i, j \in S$, 
\[
m(\{x \in X: U_{i,n}(x) = j, \mathrm{for~some}~n \geq 1\}) = 1. 
\]
\end{defn}

\begin{defn}
In the setting of the deterministic walk we consider the analogous setup to the RWRE in which the environment of transition functions on the state space must first be generated before the deterministic walk is run. In Sections \ref{Sec:Zero_One} and \ref{Sec:Transience}, our primary interest in such systems will be in the situation where the state space is $\Z$, and where environments $(\ldots, f_{-1}, f_0, f_1, \ldots)$ of transition functions are generated by an i.i.d.\ process or, more generally, an ergodic and stationary process. Because such processes can be described deterministically, we call such a system a \emph{deterministic walk in a deterministic environment} on $\Z$ (DWDE).
\end{defn}

\section{Asymptotic Properties of a Deterministic Walk in a Fixed Environment}\label{Sec:AsympPropsDWDE}

In this section we prove our first main result, in which we establish hypotheses under which a deterministic walk in a given fixed environment on a countable state space $S$ is recurrent with probability 0 or 1. 

\begin{defn}
Define the measure $\mu := m \times \mathrm{counting~measure}$.
\end{defn}
Clearly, $\mu$ is a $\sigma$-finite measure. 

The following two results are straightforward consequences of the standing hypotheses, and are of central importance. (The routine details of the proofs of the results can be found in \cite[Proposition 4.1]{Little2012a} and \cite[Proposition 4.3]{Little2012a}, respectively.)
\begin{pro}\label{pro:SkewMarkov}
Suppose that $T$ is Markov, with Markov partition $\beta$, and that transition functions are constant on elements of $\beta$. Then for any realisation of the environment the skew-product $T_f: X \times S \rightarrow X \times S$ is a Markov transformation of the measure space $(X \times S, \mu)$ with Markov partition $\beta \times S$.
\end{pro}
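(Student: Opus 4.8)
The plan is to verify directly the two defining conditions of a Markov transformation for $T_f$ with respect to the candidate partition $\beta \times S := \{a \times \{i\} : a \in \beta,\ i \in S\}$, together with the non-singularity of $T_f$ with respect to $\mu$. The single observation that drives the whole argument is that, since each transition function $f_i$ is \emph{constant} on every element $a \in \beta$, the second coordinate of $T_f$ is constant on each set $a \times \{i\}$. This reduces every claim about $T_f$ to the corresponding (already known) claim about the base map $T$, so I expect the proposition to be essentially routine.

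First I would record that $\beta \times S$ is a measurable partition of $X \times S$: since $\beta$ partitions $X$ into measurable sets and $S$ is countable, the sets $a \times \{i\}$ are measurable, pairwise disjoint, and cover $X \times S$. Next, I fix $a \in \beta$ and $i \in S$ and let $j$ denote the constant value of $f_i$ on $a$. For every $x \in a$ we have $T_f(x,i) = (Tx, j)$, so $T_f(a \times \{i\}) = T(a) \times \{j\}$. Because $T$ is Markov, $T(a)$ is a union of elements of $\beta$, say $T(a) = \bigcup_{b \in \mathcal{B}_a} b$; hence $T_f(a \times \{i\}) = \bigcup_{b \in \mathcal{B}_a} (b \times \{j\})$ is a union of elements of $\beta \times S$, which is condition (i) in the definition of a Markov partition. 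For condition (ii), the restriction $T_f|_{a \times \{i\}} : (x,i) \mapsto (Tx, j)$ has constant second coordinate, so it is a bijection onto $T(a) \times \{j\}$ precisely because $T|_a : a \to T(a)$ is a bijection.

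It then remains to check that $T_f$ is non-singular for $\mu = m \times (\text{counting measure})$. Here I would write an arbitrary measurable $E \subseteq X \times S$ as $E = \bigcup_{j \in S} E_j \times \{j\}$ and compute $T_f^{-1}E = \bigcup_{i \in S} \bigl( \bigcup_{j \in S} (A_{i,j} \cap T^{-1}E_j) \bigr) \times \{i\}$, where $A_{i,j} := \{x : f_i(x) = j\}$ is measurable (indeed a union of elements of $\beta$, by constancy of $f_i$). If $\mu(E) = 0$ then $m(E_j) = 0$ for every $j$, and non-singularity of $T$ gives $m(T^{-1}E_j) = 0$; since $S$ is countable, each fibre of $T_f^{-1}E$ is a countable union of $m$-null sets and hence null, so $\mu(T_f^{-1}E) = 0$. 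The converse implication is symmetric: if $m(E_j) > 0$ for some $j$, then, since non-singularity of $T$ forces $X = \bigcup_{a} T(a)$ up to a null set, we have $m(T(a) \cap E_j) > 0$ for some $a \in \beta$, and lifting through the bijection $T|_a$ produces a positive-$\mu$ set inside $T_f^{-1}E$.

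I do not anticipate a genuine obstacle in this argument. The only point requiring care is the bookkeeping of the countable fibre sums in the non-singularity step, which is exactly where the countability of the \emph{state space} $S$ (rather than of $\beta$) is used; everything else is an immediate transfer of the Markov structure of $T$ to the fibred map $T_f$ via the constancy of the transition functions on $\beta$.
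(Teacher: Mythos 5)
The paper gives no proof of this proposition at all: it calls the result a ``straightforward consequence of the standing hypotheses'' and refers the routine details to the author's thesis, so the only available comparison is with the direct verification that is clearly intended. Your treatment of the two partition conditions is exactly that verification and is correct: constancy of $f_i$ on $a$ gives $T_f(a \times \{i\}) = T(a) \times \{j\}$, which is a union of elements of $\beta \times S$ by the Markov property of $T$, and $T_f$ restricted to $a \times \{i\}$ is a bijection onto this image because $T|_a$ is. The forward half of your non-singularity argument (null sets pull back to null sets, via the countable fibre decomposition) is also fine.

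The gap is in the converse half of the non-singularity argument, and it is twofold. First, ``lifting through the bijection $T|_a$'' is not justified: a measurable bijection need not carry positive-measure sets to positive-measure sets, so $m(T(a) \cap E_j) > 0$ does not by itself yield $m(a \cap T^{-1}E_j) > 0$ (the $m$-mass of $T^{-1}E_j$ could be concentrated in other branches); and even granting that, you still need \emph{some} state $i$ with $f_i \equiv j$ on that particular $a$ before $(a \cap T^{-1}E_j) \times \{i\}$ sits inside $T_f^{-1}E$. Second, and more seriously, this direction can genuinely fail under the stated hypotheses: if no transition function ever takes the value $j$ (for instance every $f_i \equiv 0$), then $E := X \times \{j\}$ satisfies $\mu(E) = 1$ while $T_f^{-1}E = \emptyset$, so $\mu \not\ll \mu \circ T_f^{-1}$. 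The implication you do prove correctly, $\mu(E) = 0 \Rightarrow \mu(T_f^{-1}E) = 0$, is the one the paper actually uses (it is what licenses the Radon--Nikodym derivatives in Definition \ref{def:StrongDistortion} and the estimates of Proposition \ref{pro:GibbsPropimpliesLeakage}), so the clean fix is either to take non-singularity in this one-sided sense or to observe that the two-sided version needs every state to be hit by some transition function on a positive-measure set, which holds for instance under irreducibility.
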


\begin{pro}\label{pro:T_fisMGF}
Suppose that $T$ is Markov, with Markov partition $\beta$, and that it has the Strong Distortion Property. Suppose also that transition functions are constant on elements of $\beta$. Then the skew-product $T_f: X \times S \rightarrow X \times S$ has the Strong Distortion Property with respect to the measure $\mu$.
\end{pro}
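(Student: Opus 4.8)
The plan is to exploit the rigidity of the fibre dynamics that follows from the hypothesis that each $f_i$ is constant on elements of $\beta$. By Proposition \ref{pro:SkewMarkov}, $T_f$ is Markov with Markov partition $\beta \times S$, so the claim reduces to verifying the distortion bound of Definition \ref{def:StrongDistortion} on the cylinders of $T_f$. My first step is therefore to describe these cylinders explicitly.

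First I would show that every admissible rank-$n$ cylinder for $T_f$ has the form $A \times \{i\}$, where $A \in \beta_n$ is a rank-$n$ cylinder for $T$ and $i \in S$. Writing $i_0 := i$ and $i_{j+1} := f_{i_j}(T^j x)$, one has $T_f^j(x, i) = (T^j x, i_j)$. A point $(x,i)$ lies in the cylinder $\bigcap_{j=0}^{n-1} T_f^{-j}(b_j \times \{l_j\})$ precisely when $x \in A := \bigcap_{j=0}^{n-1} T^{-j} b_j$ and $i_j = l_j$ for each $j$. Since $f_{i_{j-1}}$ is constant on $b_{j-1} \ni T^{j-1} x$, each $i_j$ is determined by $A$ and $i_0$ alone; hence the cylinder is empty unless the prescribed labels $l_1, \ldots, l_{n-1}$ are the forced ones, in which case it equals $A \times \{i_0\}$. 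By the same token the terminal label $i_n = f_{i_{n-1}}(T^{n-1}x)$ is constant on $A$ --- call it $j_\ast$ --- so that $T_f^n(A \times \{i\}) = T^n A \times \{j_\ast\}$.

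The main step is to identify the Radon--Nikodym derivative $v'_{A \times \{i\}} := \frac{d(\mu \circ T_f^{-n})}{d\mu}$ with the base derivative $v'_A := \frac{d(m \circ T^{-n})}{dm}$. Since $\mu$ is the product of $m$ with counting measure and $S$ is countable, every measurable subset of $X \times S$ is a countable disjoint union of rectangles $B' \times \{j\}$ with $B' \subseteq X$ measurable, so it suffices to test the defining relation of Definition \ref{def:StrongDistortion} against such rectangles. For these, $(A \times \{i\}) \cap T_f^{-n}(B' \times \{j\})$ is empty unless $j = j_\ast$, in which case it equals $(A \cap T^{-n}B') \times \{i\}$ and so has $\mu$-measure $m(A \cap T^{-n} B')$. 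Comparing with $\int_{T_f^n(A \times \{i\}) \cap (B' \times \{j\})} v'_{A \times \{i\}}\, d\mu$, which vanishes unless $j = j_\ast$ and otherwise equals $\int_{T^n A \cap B'} v'_{A \times \{i\}}(x, j_\ast)\, dm(x)$, and using the relation $m(A \cap T^{-n} B') = \int_{T^n A \cap B'} v'_A\, dm$ for $T$, I conclude that $v'_{A \times \{i\}}(x, j_\ast) = v'_A(x)$ for a.e.\ $x \in T^n A$, with $v'_{A \times \{i\}}$ supported on $T^n A \times \{j_\ast\}$.

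The distortion bound then transfers verbatim: for a.e.\ pair $(x, j_\ast), (y, j_\ast) \in T_f^n(A \times \{i\})$,
\[
\frac{v'_{A \times \{i\}}(x, j_\ast)}{v'_{A \times \{i\}}(y, j_\ast)} = \frac{v'_A(x)}{v'_A(y)} \leq D,
\]
where $D$ is the distortion constant of $T$. As $A \in \beta_n$ and $i \in S$ are arbitrary, $T_f$ has the Strong Distortion Property with the same constant $D$. I expect the only delicate point to be the bookkeeping in the Radon--Nikodym computation --- specifically, checking that testing against the rectangles $B' \times \{j\}$ pins down the derivative $\mu$-a.e.\ and that the counting-measure factor contributes no distortion --- while the identification of the cylinders and the final inequality are immediate from the constancy of the $f_i$ on $\beta$.
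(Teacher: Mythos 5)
Your proof is correct. The paper does not include a proof of this proposition (it defers the ``routine details'' to \cite[Proposition 4.3]{Little2012a}), but your argument is exactly the natural one the paper has in mind: the cylinder structure you derive in your first step is precisely the paper's equation (\ref{eq:vn_A}), and the identification $v'_{A\times\{i\}}(x,j_\ast)=v'_A(x)$ via testing against the slices $B'\times\{j\}$ correctly transfers the distortion constant $D$ unchanged.
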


\begin{defn}
We say that a Markov transformation $T$ of probability space $(X, m)$, with Markov partition $\beta$, has the \emph{Big Image Property} if $\inf_{a \in \beta} m(Ta) > 0$.
\end{defn}
\textbf{Standing hypotheses for the deterministic walk}. Hereafter, we assume that the deterministic walk is driven by a Markov transformation $T$ of a probability space $(X, m)$, with Markov partition $\beta$, that has the Strong Distortion and Big Image properties. We also assume that transition functions are constant on elements of $\beta$.

\begin{defn}
Define $\tilde{\beta} := \beta \times S$ and, for $n \geq 1$, $\tilde{\beta}_n := \bigvee_{j=0}^{n-1}T_f^{-j}\tilde{\beta}$.
\end{defn}
It is an immediate consequence of the hypotheses that $T$ is a Markov map with Markov partition $\beta$ and that transition functions are constant on elements of $\beta$, that for all $n \geq 1$ and all $v \in \tilde{\beta}_n$ there exists $a \in \beta_n$, $i, k \in S$ such that 
\begin{equation}\label{eq:vn_A}
v  = a \times \{i\} \quad \& \quad T_f^n(v) = T^na \times \{k\}.
\end{equation}
It follows from (\ref{eq:vn_A}) that for all cylinders $v$
\begin{equation}\label{eqn:bddimage}
\mu(T_f^{|v|}v) \leq 1.
\end{equation}
It also follows from (\ref{eq:vn_A}) and from the Big Image property that
\begin{equation}\label{eqn:bigimage1}
\mu(T_f^{n}v) = m(T^na) \geq \ep > 0
\end{equation}
where $\ep: = \inf_{a \in \beta}m(a)$.

\begin{defn}\label{def:bj} We say that the deterministic walk has \emph{bounded jumps} if $f(X)$ is a finite subset of $S$, for all transition functions $f$.
\end{defn}

\begin{defn} For $A \subset \tilde{\beta}$, we define the projection of $A$,
\[
\pi_2(A) := \{i \in S: \exists a \in \beta: a \times \{i\} \in A\}.
\]
Given $K \subset S$, $i \in S$, and $x \in X$, we say that that the orbit $(U_{i,n}(x))_{n \geq 0}$ is \emph{transitive} on $K$ if it visits every element of $K$ infinitely often. Conversely, we say that the orbit $(U_{i,n}(x))_{n \geq 0}$ is \emph{transient} on $K$ if it visits every element of $K$ at most finitely often.
\end{defn}

\begin{thm}\label{thm:Transience_TransitivityofOrbits}
If a deterministic walk in a fixed environment has bounded jumps, then for any communication class $\mathcal{C} \subset \tilde{\beta}$, either (i) for all $a \in \mathcal{C}$ and $\mu$-a.e.\ $(x, i) \in a$ the orbit $(U_{i,n}(x))_{n \geq 0}$ is transitive on $\pi_2(\mathcal{C})$, or (ii) for all $a \in \mathcal{C}$ and $\mu$-a.e.\ $(x, i) \in a$ the orbit $(U_{i,n}(x))_{n \geq 0}$ is transient on $\pi_2(\mathcal{C})$.
\end{thm}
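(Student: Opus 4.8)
The plan is to reduce the statement to the recurrence/transience dichotomy for the skew-product $T_f$ that we already established in Section \ref{Sec:MarkovDistortion}, and then upgrade the qualitative dichotomy (recurrent vs.\ transient on a single cylinder) to the full transitivity/transience statement about orbits on $\pi_2(\mathcal{C})$ using the bounded-jumps hypothesis. By Propositions \ref{pro:SkewMarkov} and \ref{pro:T_fisMGF}, under the standing hypotheses $T_f$ is a Markov transformation of $(X \times S, \mu)$ with Markov partition $\tilde{\beta}$ and the Strong Distortion Property, and by (\ref{eqn:bddimage})--(\ref{eqn:bigimage1}) it has finite-and-uniformly-positive image measures. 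Hence all of Lemmas \ref{lem:Recimp_ioRec}--\ref{cor:Transitivity} apply to $T_f$ on $\tilde{\beta}$.

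First I would invoke Lemma \ref{cor:Transience} for the skew-product: a fixed communication class $\mathcal{C} \subset \tilde{\beta}$ is either entirely recurrent or entirely transient. In case $\mathcal{C}$ is recurrent, Lemma \ref{cor:Transitivity} tells us $\mathcal{C}$ is transitive, so for all $a, b \in \mathcal{C}$ and a.e.\ $(x,i) \in a$ the $T_f$-orbit enters $b$; combining this with Lemma \ref{lem:Recimp_ioRec} (applied to each $b$, which is recurrent by Lemma \ref{cor:Transience}) gives that the orbit in fact visits every $b \in \mathcal{C}$ infinitely often for a.e.\ starting point in $a$. Projecting via $\pi_2$ and recalling that $b = a' \times \{j\}$ means $U_{i,n}(x) = j$ at the relevant times, this yields conclusion (i): the orbit $(U_{i,n}(x))_n$ is transitive on $\pi_2(\mathcal{C})$. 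In case $\mathcal{C}$ is transient, every $b \in \mathcal{C}$ is transient, so by Lemma \ref{lem:Trans_iozero} the set of points whose $T_f$-orbit returns to $b$ infinitely often is $\mu$-null. Here is where bounded jumps enters: for a fixed state $j \in \pi_2(\mathcal{C})$, only finitely many cylinders $b = a' \times \{j\} \in \mathcal{C}$ can be reached in one step (the jump set is finite), and more to the point only finitely many elements of $\tilde{\beta}$ projecting to $j$ lie in $\mathcal{C}$ — so ``$U_{i,n}(x) = j$ i.o.'' is covered by the finite union of the null sets ``$T_f^n(x,i) \in b$ i.o.'' over those finitely many $b$. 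A finite union of null sets is null, giving conclusion (ii) state by state, and then for all of $\pi_2(\mathcal{C})$.

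The main obstacle will be the bookkeeping between the fibre-level statement about the deterministic walk $U_{i,n}$ and the cylinder-level statement about $T_f$-orbits. A single state $j \in \pi_2(\mathcal{C})$ can correspond to many cylinders $a' \times \{j\}$ in $\tilde{\beta}$, and these need not all lie in the same communication class; the event $\{U_{i,n}(x) = j\}$ is the event that $T_f^n(x,i)$ lands in \emph{some} cylinder over $j$, so I must argue carefully that, starting inside $\mathcal{C}$, only cylinders belonging to $\mathcal{C}$ are relevant (orbits cannot leave a recurrent class, and in the transient case the finitely many reachable cylinders over $j$ inside $\mathcal{C}$ suffice to cover the i.o.\ event up to a null set). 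The bounded-jumps hypothesis is exactly what keeps these unions finite, so that the null-set arguments of Lemma \ref{lem:Trans_iozero} survive the projection; without it one would face a countable union and the transience conclusion could fail. Once this correspondence is set up cleanly, the two cases follow directly from Lemmas \ref{lem:Recimp_ioRec}, \ref{lem:Trans_iozero}, \ref{cor:Transience}, and \ref{cor:Transitivity}.
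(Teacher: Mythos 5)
Your reduction to the recurrence/transience dichotomy for $T_f$ via Propositions \ref{pro:SkewMarkov} and \ref{pro:T_fisMGF} and Lemmas \ref{cor:Transience}--\ref{cor:Transitivity} is exactly the paper's opening move, and your treatment of the recurrent case (transitivity of the class plus Lemma \ref{lem:Recimp_ioRec}, then a countable intersection of full-measure sets) is sound and matches the paper. The transient case is where there is a genuine gap. Your key claim --- that ``only finitely many elements of $\tilde{\beta}$ projecting to $j$ lie in $\mathcal{C}$,'' so that $\{U_{i,n}(x)=j \ \mathrm{i.o.}\}$ is covered by a finite union of the null sets $\{T_f^n(x,i)\in b \ \mathrm{i.o.}\}$ --- is false when $\#\beta=\infty$. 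Bounded jumps makes the set of \emph{states} reachable in one step finite; it says nothing about the number of base-partition elements, and a single state $j$ sits under the $\#\beta$-many cylinders $a'\times\{j\}$, all of which can lie in $\mathcal{C}$ (e.g.\ for a full-branch map with countably many branches and an irreducible environment). Worse, even passing to the countable union does not help: a point can visit the fibre $X\times\{j\}$ infinitely often while visiting each individual cylinder over $j$ only finitely often, so the state-level i.o.\ event is not contained in the union of the cylinder-level i.o.\ events at all. This is precisely the hard part of the theorem, and your proposal never engages with it --- note that you nowhere use the Big Image property, which is a standing hypothesis and is indispensable here.

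The paper's proof splits the transient case into $\#\beta<\infty$ (where your finite-union argument is correct, since each fibre genuinely contains only finitely many partition elements) and $\#\beta=\infty$. In the latter case it uses the Big Image property to build a \emph{finite} set $\Lambda$ of partition elements that captures at least $\ep/2$ of the measure of every one-step image $T_f(a)$, $a\subset D_i$; bounded jumps is what makes the relevant index set $J$ of target states, and hence $\Lambda$, finite. A leakage argument in the style of Lemma \ref{lem:Trans_iozero} then shows that the measure of orbits making $q$ returns to the fibre over $i$ without entering $\Lambda$ decays exponentially in $q$, so almost every orbit returning to state $i$ infinitely often must enter $\Lambda$ infinitely often; since $\Lambda$ is finite and every element of $\mathcal{C}$ is transient, this happens only on a null set. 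To repair your proof you would need to supply this (or an equivalent) argument; the dichotomy lemmas of Section \ref{Sec:MarkovDistortion} alone do not suffice.
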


\begin{proof}
It follows from Lemma \ref{cor:Transience} and Proposition \ref{pro:T_fisMGF} that the communication class $\mathcal{C}$ is either recurrent or transient. 

Suppose that $\mathcal{C}$ is recurrent. Then by Lemma \ref{cor:Transitivity}, $\mathcal{C}$ is transitive, and it follows that for all $a \in \mathcal{C}$ and $\mu$-a.e.\ $(x, i) \in a$, $(U_{i,n}(x))_{n \geq 0}$ is transitive on $\pi_2(\mathcal{C})$.

Conversely, suppose that $\mathcal{C}$ is transient. We deal separately with the cases where $\#\beta < \infty$ and $\#\beta = \infty$.

Suppose that $\# \beta < \infty$. An immediate consequence of Lemma \ref{lem:Trans_iozero} and Proposition \ref{pro:T_fisMGF} is that for all $a \in \mathcal{C}$ and $\mu$-a.e.\ $(x, i) \in a$, the orbit of $(x, i)$ under $T_f$ visits each partition element in $\mathcal{C}$ at most finitely often. For each $i \in S$, $X \times \{i\}$ contains only finitely many partition elements, and the result follows.

Suppose, instead, that $\#\beta = \infty$. Fix $i \in S$ such that 
\[
\mathcal{C}_i := \{a \in \mathcal{C}: a \subset X \times \{i\} \} \neq \emptyset.
\]
Define
\[
D_i := \{x: \exists a \in \mathcal{C}_i, x \in a\}. 
\]
To complete the proof we show that
\begin{equation}\label{eqn:transiozero}
\mu(\{(x, i) \in D_i: U_{i,n}(x) = i ~\mathrm{i.o.}\}) = 0.
\end{equation}
It then follows from the non-singularity of $T_f$ that for all $a \in \mathcal{C}$ and $\mu$-a.e.\ $(x, i) \in a$, the orbit $(U_{i,n}(x))_{n \geq 0}$ is transient on $\pi_2(\mathcal{C})$, as required. 

Since the deterministic walk has bounded jumps it follows that there exists a finite set $J \subset S$ such that for all $x \in D_i$, $\pi_2(T_f(x)) \in J$. For each $l \in J$, let $\{a_{l, n}\}_{n \in \N}$ be an enumeration of all the partition elements of $\beta$ such that $a_{l, n} \times \{l\} \subset T_f(D_i)$. By (\ref{eqn:bigimage1}) there exists $\ep > 0$ such that for all $a \in \tilde{\beta}$, $\mu(T_f(a)) \geq \ep$. For each $l \in J$ define
\[
M_l := \inf\{k: m(\cup_{n=k}^{\infty} a_{l, n}) < \frac{\ep}{2}\}.
\]
Define 
\[
\Lambda := \{(x, l) \in X \times S: l \in J ~\&~ x \in a, \mathrm{for~some}~ a \in \{a_{l, n}\}_{n=1}^{M_l}\}.
\]
Given an $n$-cylinder $v \in \tilde{\beta}_n$, we let $v_j$ denote the $j$th component of $v$, for $0 \leq j \leq n-1$. For $n \geq 1, q \geq 0$ and $i \in S$ define
\begin{eqnarray*}
B_{q,n} := \{v \in \tilde{\beta}_n: \exists 0 = r_0 < r_1 < \ldots < r_q = n-1, v_{r_k} \in \mathcal{C}_i, ~\mathrm{for}~k = 0, \ldots, q,& &\\
~\&~ v_{t} \cap \Lambda = \emptyset, ~\mathrm{for}~t = 0, \ldots, n-1\}.
\end{eqnarray*}
Thus, each $v \in B_{q,n}$ is an $n$-cylinder contained in the set $D_i$, each of whose elements make at least $q$ return visits to $\mathcal{C}_i$ before visiting $\Lambda$, if ever. 

It follows from Proposition \ref{pro:GibbsPropimpliesLeakage} and Proposition \ref{pro:T_fisMGF} that there exists $C \geq 1$ such that for all $n \geq 1$ and $q \geq 0$, and for all $v := [c_0 \times \{i\}, \ldots, c_{n-1} \times \{i\}] \in B_{q, n}$, we have  
\begin{equation}\label{eqn:Lambdadecay1}
\frac{\mu(v \cap T_f^{-n}(\Lambda))}{\mu(v)}  \geq C^{-1} \frac{\mu(T_f^n(v) \cap \Lambda))}{\mu(T_f^n(v))}.
\end{equation}
By (\ref{eq:vn_A}), $T_f^n(v) = T_f(c_{n-1} \times \{i\}) = Tc_{n-1} \times \{k\}$, where $k := f_i(c_{n-1})$. From the definition of $\mu$ and $\Lambda$ we have 
\begin{equation}\label{eqn:Lambdadecay2}
\frac{\mu(T_f^n(v) \cap \Lambda)}{\mu(T_f^n(v))} = \frac{\mu(Tc_{n-1} \times \{k\} \cap (\cup_{j=1}^{M_k} a_{k, j} \times \{k\}))}{\mu(Tc_{n-1} \times \{k\})} = \frac{m(Tc_{n-1}  \cap \cup_{j=1}^{M_k} a_{k, j})}{m(Tc_{n-1})}.
\end{equation}
\begin{equation}\label{eqn:Lambdadecay3}
\frac{m(Tc_{n-1}  \cap \cup_{j=1}^{M_k} a_{k, j})}{m(Tc_{n-1})}  \geq m(Tc_{n-1} \cap \cup_{j=1}^{M_k}a_{k, j}) \geq \ep - \frac{\ep}{2} = \frac{\ep}{2}.
\end{equation}
It now follows from (\ref{eqn:Lambdadecay1}), (\ref{eqn:Lambdadecay2}) and (\ref{eqn:Lambdadecay3}) that
\begin{equation}\label{eqn:leakofT_f}
\frac{\mu(v \cap T_f^{-n}(\Lambda))}{\mu(v)} \geq C^{-1} \frac{\ep}{2} > 0.
\end{equation}
Having established (\ref{eqn:leakofT_f}), a similar argument to that given in the proof of Lemma \ref{lem:Trans_iozero} shows that the measure of walks that start in $D_i$ and that make $q$ returns to $\mathcal{C}_i$, without visiting the set $\Lambda$, decays exponentially in $q$ at a rate of at least $1-C^{-1}\frac{\ep}{2}$. Defining 
\[
A: = \{(x, i) \in D_i: U_{i,n}(x) = i~\mathrm{i.o.}~\&~ \forall n \geq 1,~T_f^n(x, i) \notin \Lambda\},
\]
it follows immediately that
\begin{equation}\label{eqn:mDzero}
\mu(A) = 0.
\end{equation} 
Defining 
\[
B := \{(x, i) \in D_i: U_{i,n}(x) = i~\mathrm{i.o.}~\&~ \exists N \forall n \geq N,~T_f^n(x, i) \notin \Lambda\},
\]
it follows that $B \subset D_i \cap (\cup_{n \geq 0} T_f^{-n}A)$. Since $T_f$ is non-singular it follows from (\ref{eqn:mDzero}) that
\begin{equation}\label{eqn:io0}
\mu(B) = 0.
\end{equation} 
Since the deterministic walk has bounded jumps $\# \Lambda < \infty$. It follows that since $\mathcal{C}$ is transient, for $\mu$-a.e.\ $(x, i) \in D_i$ there exists $N$ such that for all $n \geq N$, $T_f^n(x, i) \notin \Lambda$. Equation (\ref{eqn:transiozero}) now follows from (\ref{eqn:io0}). This completes the proof.
\end{proof}

We have the following consequence of Theorem \ref{thm:Transience_TransitivityofOrbits}.

\begin{thm}\label{cor:NoSemibdd}
Suppose that $S = \Z$, the deterministic walk in a given environment has bounded jumps, and that for every closed communication class $\mathcal{C} \subset \tilde{\beta}$, $\pi_2(\mathcal{C}) = \Z$. Then either
\begin{itemize}
\item[(i)] the deterministic walk is recurrent and $\limsup_{n \rightarrow \infty} U_{i,n}(x) = \infty$ and $\liminf_{n \rightarrow \infty} U_{i,n}(x) = -\infty$, for all $a \in \mathcal{C}$ and $\mu$-a.e.\ $(x, i) \in a$, or
\item[(ii)] $\lim_{n \rightarrow \infty} |U_{i,n}(x)| = \infty$, for all $a \in \mathcal{C}$ and $\mu$-a.e.\ $(x, i) \in a$.	
\end{itemize} 
\end{thm}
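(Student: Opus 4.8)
The plan is to apply Theorem \ref{thm:Transience_TransitivityofOrbits} to a closed communication class and then to leverage the hypothesis $\pi_2(\mathcal{C}) = \Z$ to translate the transitivity/transience dichotomy on $\pi_2(\mathcal{C})$ into the stated statements about $\limsup$, $\liminf$, and $|U_{i,n}|$. First I would fix a closed communication class $\mathcal{C} \subset \tilde{\beta}$; its existence needs a brief justification, since for $\#\beta = \infty$ closed classes are not automatic. The natural argument is to start from an admissible cylinder and follow the communication relation: because $S = \Z$ and jumps are bounded, the reachable set must eventually stabilise into a closed class. I would state this as a short lemma or inline remark. Theorem \ref{thm:Transience_TransitivityofOrbits} then gives the clean dichotomy: for all $a \in \mathcal{C}$ and $\mu$-a.e.\ $(x,i) \in a$, the orbit $(U_{i,n}(x))_n$ is either transitive on $\pi_2(\mathcal{C}) = \Z$ or transient on $\Z$.

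In the transitive case, transitivity on $\pi_2(\mathcal{C}) = \Z$ means the orbit visits \emph{every} integer infinitely often. In particular it visits arbitrarily large positive and arbitrarily large negative integers infinitely often, which immediately gives $\limsup_n U_{i,n}(x) = +\infty$ and $\liminf_n U_{i,n}(x) = -\infty$. I would also note that visiting the starting state infinitely often makes the walk recurrent in the sense of Definition \ref{def_RecTransofDW}, establishing case (i). This direction is essentially a restatement and should be short.

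The substantive work is in the transient case, where I must upgrade ``visits every integer at most finitely often'' to ``$|U_{i,n}(x)| \to \infty$''. These are genuinely different: a priori a transient orbit could oscillate between visiting large positive values and large negative values without ever settling, or it could have a bounded subsequence that avoids each individual state only finitely often yet returns to the bounded region through different states each time. The key tool is the \textbf{bounded jumps} hypothesis: since each step changes the state by at most some fixed $R := \max_f \operatorname{diam}(f(X) \cup \{\text{domain label}\})$, the orbit cannot skip over any integer. I would argue by contradiction: if $|U_{i,n}(x)| \not\to \infty$ then there is a finite window $[-M, M]$ that the orbit enters infinitely often, so some single integer in $[-M, M]$ is visited infinitely often (a pigeonhole on the finite window, using bounded jumps to control entry points). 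This contradicts transience on $\Z$, which asserts every integer is visited only finitely often. Hence $|U_{i,n}(x)| \to \infty$ for a.e.\ such $(x,i)$, giving case (ii).

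The main obstacle I anticipate is making the pigeonhole rigorous at the level of the skew-product and the measure $\mu$ rather than pointwise for a single orbit: I must ensure the ``infinitely often in a finite window'' event, intersected with transience, is handled on a full-measure set uniformly, and that bounded jumps genuinely prevent the orbit from re-entering $[-M,M]$ without passing through one of its finitely many integers. Using the finiteness of $\pi_2(\mathcal{C}) \cap [-M, M]$ together with the fact (from Theorem \ref{thm:Transience_TransitivityofOrbits}) that transience holds simultaneously for all states in $\pi_2(\mathcal{C})$ on a $\mu$-full set should close this gap, since a finite union of null ``visited i.o.'' events is null.
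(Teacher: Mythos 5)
Your proposal follows the paper's own proof exactly: the paper likewise deduces the dichotomy from Theorem \ref{thm:Transience_TransitivityofOrbits} applied to a closed class with $\pi_2(\mathcal{C}) = \Z$ and declares both translations immediate, while you merely spell out the pigeonhole in the transient case (which in fact needs only the finiteness of $[-M,M]\cap\Z$, not bounded jumps) and worry about the existence of closed classes, which the conditional form of the statement renders unnecessary. The argument is correct and essentially identical to the paper's.
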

\begin{proof}
It is immediate from Theorem \ref{thm:Transience_TransitivityofOrbits} that for each closed communication class $\mathcal{C} \subset \tilde{\beta}$, $(U_{i,n}(x))_{n \geq 0}$ is recurrent on $\pi_2(\mathcal{C}) = \Z$ for all $a \in \mathcal{C}$ and $\mu$-a.e.\ $(x,i) \in a$, or $(U_{i,n}(x))_{n \geq 0}$ is transient on $\pi_2(\mathcal{C}) = \Z$ for all $a \in \mathcal{C}$ and $\mu$-a.e.\ $(x,i) \in a$. The recurrent case implies that (i) holds, whereas the transient case implies that (ii) holds.
\end{proof}

\begin{rmk} In the transient case, the limit $\pm \infty$ may depend on $x$. For example, in the case of a Markov chain on $\Z$ with jumps of $\pm 1$ such that $p_{i, i+1} = \frac{3}{4}$ and $p_{i, i-1} = \frac{1}{4}$ for all $i \geq 0$, and $p_{i, i+1} = \frac{1}{4}$ and $p_{i, i-1} = \frac{3}{4}$ for all $i < 0$, then the walk diverges both to the left and to the right with positive probability.
\end{rmk}

\section{Zero-One Laws for a DWDE on $\Z$}\label{Sec:Zero_One}

Hereafter, we confine our attention to the situation where the state space $S = \Z$. In addition to the standing hypotheses we will also assume henceforth that the Markov partition $\beta$ separates points (i.e. for all $x \neq y \in X$, there exists $n \geq 1$ and $a, b \in \beta_n$ such that $a \cap b = \emptyset$ and $x \in a$ and $y \in b$). We also assume that the process by which the environment is generated is \emph{ergodic} and \emph{stationary}. More precisely, we consider the following setup. 

As per the standing hypotheses, we suppose that the set of transition functions from which environments are constructed is some set
\begin{equation}\label{G}
G \subseteq \{g: X \rightarrow \Z: g~\mathrm{constant~on~elements~of~\beta}\}, 
\end{equation} 
Let $\eta: \Omega \rightarrow \Omega$ be an invertible measurable transformation of a probability space $(\Omega, P)$, and $\phi: \Omega \rightarrow G$ be a measurable function. We suppose that each point $\omega \in \Omega$ encodes an environment $(f_i)_{i \in \Z}$ such that
\begin{equation}
f_i := \phi \circ \eta^i(\omega), \quad \mathrm{for~all}~i \in \Z.
\end{equation} 
We call the process $(\phi \circ \eta^i)_{i \in \Z}$ \emph{a deterministic environment on} $\Z$. 

We shall be interested in the cases where where the deterministic environment is either i.i.d.\ or, more generally, ergodic and stationary (i.e.\ $\eta$ is ergodic and measure preserving).

Associated to every $\omega \in \Omega$ is a function 
\begin{eqnarray*}
f(\omega): X \times \Z & \rightarrow & \Z\\
             (x, i)   & \mapsto & f_i(x)
\end{eqnarray*}
where $f_i := \phi(\eta^i(\omega))$. For $\omega \in \Omega$, we thus define the skew-product $T_{f(\omega)}: X \times \Z \rightarrow X \times \Z$ such that
\begin{equation}
T_{f(\omega)}(x, i) = (Tx, i + f_i(x)).
\end{equation}
Typically, we will suppress reference to $\omega$, the dependence being tacit, and simply write $T_f$ instead of $T_{f(\omega)}$.

\begin{defn}\label{def:DWDE}
We define a \emph{deterministic walk in a deterministic environment on} $\Z$ (DWDE) to be the process $U_n := \pi_2 \circ T_f^n$, for $n \geq 0$. Specifically, for $i \in \Z$, $x \in X$, and $\omega \in \Omega$, we define 
\begin{equation}\label{eq:DWDE1}
U_{i,n}(x, \omega) := \pi_2(T_{f(\omega)}^n(x, i)).
\end{equation}
It follows that $U_{i,0}(x, \omega) := i$ and for all $n \geq 0$
\begin{equation}\label{eq:DWDE2}
U_{i,n+1}(x, \omega) := U_{i,n}(x, \omega) + f(\omega)(T^n(x), U_{i,n}(x, \omega)).
\end{equation}
Hence, $U_{i,n+1}(x, \omega) = \sum_{r=0}^n f(\omega)(T^r(x), U_{i,r}(x, \omega))$.
\end{defn}
Before proceeding to the main results of this section we establish some technical results regarding the measurability of certain sets of environments - specifically, Lemma \ref{pro:MeasurabilityofDWDE} and Corollary \ref{cor:MeasR} below. (The reader who is not concerned with such technicalities may wish to pass over these results.) It follows from (\ref{G}) that if $\# \beta < \infty$ then $G$ is necessarily countable, in which case $G$ is equipped with the discrete topology and we assume that $\phi: \Omega \rightarrow G$ is measurable with respect to the sigma-algebra generated by that topology. If instead $\#\beta = \infty$, and $\{a_n\}_{n \geq 1}$ is an enumeration of the Markov partition $\beta$, define the function $S: G \times G \rightarrow \N$ such that $S(f,g) := \max\{n: f(a_n) = g(a_n)\}$. Defining the metric $d: G \times G \rightarrow [0,1]$ such that $d(f, g) := (\textstyle\frac{1}{2})^{S(f,g)}$, we assume that $\phi: \Omega \rightarrow G$ is measurable with respect to the sigma-algebra generated by the topology given by $d$.

\begin{lem}\label{pro:MeasurabilityofDWDE}
Given $n \geq 0$, $k_0, \ldots, k_{n-1} \in \Z$ and $r \in [0,1]$, the set $\{\omega: m(\{x: U_{0,s}(x, \omega) = \sum_{i=0}^{s-1}k_i, ~\mathrm{for}~s = 1, \ldots, n\}) > r\}$ is measurable.
\end{lem}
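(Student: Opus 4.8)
The plan is to eliminate the walk from the event, translating it into a condition on only finitely many of the transition functions, and then to show that the measure of the resulting subset of $X$ depends measurably on $\omega$.

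First I would introduce the partial sums $P_s := \sum_{i=0}^{s-1} k_i$ (so $P_0 = 0$) and use the recursion (\ref{eq:DWDE2}) to show, by induction on $s$, that for \emph{every} $x \in X$ the event $\{U_{0,s}(x,\omega) = P_s \text{ for } s = 1, \ldots, n\}$ holds if and only if $f_{P_s}(T^s x) = k_s$ for each $s = 0, \ldots, n-1$, where $f_{P_s} = \phi(\eta^{P_s}(\omega))$. Indeed, if the walk follows the prescribed path then each increment $U_{0,s+1} - U_{0,s}$ equals $k_s$ and is given by $f(\omega)(T^s x, P_s) = f_{P_s}(T^s x)$; conversely these constraints force $U_{0,s} = P_s$ by induction. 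Since each $f_{P_s}$ is constant on elements of $\beta$, the level set $f_{P_s}^{-1}(k_s)$ is a union of partition elements, and the event becomes
\[
E(\omega) := \bigcap_{s=0}^{n-1} T^{-s}\Big( \bigcup_{a \in \beta :\, f_{P_s}(a) = k_s} a \Big),
\]
where $f_{P_s}(a)$ denotes the constant value of $f_{P_s}$ on $a$. It then remains only to prove that $\omega \mapsto m(E(\omega))$ is measurable, since measurability of $\{\omega : m(E(\omega)) > r\}$ follows at once.

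Next I would split into the two cases of the standing framework. When $\#\beta < \infty$, $G$ is countable and discrete, so $\omega \mapsto (\phi(\eta^{P_0}\omega), \ldots, \phi(\eta^{P_{n-1}}\omega))$ is measurable into the countable set $G^n$; as $m(E(\omega))$ is a fixed function of this tuple, it is a countable-valued measurable function and we are done. When $\#\beta = \infty$, I would approximate $E(\omega)$ from below, with $\{a_j\}_{j \ge 1}$ an enumeration of $\beta$, by
\[
E_N(\omega) := \bigcap_{s=0}^{n-1} T^{-s}\Big( \bigcup_{j \le N :\, f_{P_s}(a_j) = k_s} a_j \Big).
\]
Because there are only finitely many indices $s$, one checks that $E_N(\omega) \uparrow E(\omega)$, so continuity of $m$ from below gives $m(E(\omega)) = \lim_N m(E_N(\omega))$, and it suffices to show each $\omega \mapsto m(E_N(\omega))$ is measurable.

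The crux is that each $E_N(\omega)$ depends only on the finite array of values $f_{P_s}(a_j) = \phi(\eta^{P_s}\omega)(a_j)$ for $0 \le s \le n-1$ and $1 \le j \le N$. Here I would invoke that each coordinate-evaluation map $G \to \Z$, $g \mapsto g(a_j)$, is Borel measurable for the topology on $G$ fixed in the excerpt, since $d$-closeness of transition functions forces their agreement on partition elements and so these evaluations are locally constant. Composing with the measurable maps $\phi$ and $\eta^{P_s}$ shows $\omega \mapsto (f_{P_s}(a_j))_{s,j}$ is measurable into the countable set $\Z^{nN}$; since $m(E_N(\omega))$ is a fixed function of this tuple, it is measurable, and hence so is the pointwise limit $m(E(\omega))$. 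The main obstacle is precisely this last measurability input — verifying that the evaluation maps on $G$ are Borel for the metric $d$ — together with the bookkeeping confirming $E_N(\omega) \uparrow E(\omega)$ (which is where finiteness of the index set $\{0,\dots,n-1\}$ is used); by contrast, the reduction in the first step is routine once the recursion is unwound.
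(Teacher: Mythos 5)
Your proof is correct, and its core coincides with the paper's: both arguments rest on the observation that the event $\{U_{0,s}(\cdot,\omega)=P_s,\ s=1,\ldots,n\}$ is determined by the finitely many values $\phi(\eta^{P_s}\omega)(b)$, $b\in\beta$, $s=0,\ldots,n-1$, together with the measurability of $\omega\mapsto\phi(\eta^{i}\omega)(b)$ (the paper phrases this as openness of $\{g:g(b)=k\}$, you as local constancy of the evaluation maps --- the same fact). Where you diverge is in the final bookkeeping. The paper decomposes $X$ into $n$-cylinders $a$, notes that on each $a$ the itinerary is constant so that $\Pi_a=\{\omega:\phi(\eta^{R_s}\omega)(a_s)=k_s,\ s=0,\ldots,n-1\}$ is measurable, and then writes the superlevel set directly as $\bigcup_{K\in\Delta}\bigcap_{a\in K}\Pi_a$, where $\Delta$ is the countable family of finite collections of $n$-cylinders whose union has measure exceeding $r$. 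You instead keep the $x$-event as a single set $E(\omega)$ and prove the stronger statement that $\omega\mapsto m(E(\omega))$ is a measurable function, by factoring through the countable discrete spaces $G^n$ (when $\#\beta<\infty$) or $\Z^{nN}$ plus a monotone limit $E_N\uparrow E$ (when $\#\beta=\infty$). Your route avoids the finite-subcollection trick and yields measurability of the full distribution function of $m(E(\omega))$ at once, which makes corollaries such as Corollary \ref{cor:MeasR} (and variants with $\geq$, $=$, etc.) entirely automatic; the paper's route avoids any discussion of the topology needed to make the truncation/limit argument work, at the cost of only directly controlling the sets $\{m(\cdot)>r\}$. Both are complete; your monotone-approximation step correctly uses the finiteness of the index set $\{0,\ldots,n-1\}$, which is the one place where care is genuinely required.
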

\begin{proof}
Fix $n \geq 0$, $k_0, \ldots, k_{n-1} \in \Z$, and $r \in [0,1]$. Fix $a \in \beta_n$ and define
\[
\Pi_a := \{\omega: U_{0,s}(x, \omega) = \sum_{i=0}^{s-1}k_i,~\forall x \in a ~\&~ s = 0, \ldots, n\}.
\]
Firstly, we show that $\Pi_a$ is measurable. In both the case where $\#\beta < \infty$ and $\#\beta = \infty$ it is apparent that for all $k \in \Z$ and all $b \in \beta$, $\{g: g(b) = k\}$ is an open set, and hence $\phi^{-1}\{g: g(b) = k\}$ is measurable. Since $\eta: \Omega \rightarrow \Omega$ is measurable it follows that for all $i, k \in \Z$ and all $b \in \beta$ the set $\{\omega: (\phi(\eta^i(\omega))(b) = k\}$ is measurable. If $a: = [a_0, \ldots, a_{n-1}]$, letting $R_s := \sum_{i=0}^{s-1}k_i$ since 
\[
\Pi_a = \{\omega: \phi(\eta^{R_s}\omega)(a_s) = k_s,~ \mathrm{for}~ s = 0, \ldots, n-1\}
\]
it follows that $\Pi_a$ is measurable.  

A given $\omega \in \Omega$ satisfies $m(\{x: U_{0,s}(x, \omega) = R_s, ~\mathrm{for}~s = 0, \ldots, n\}) > r$
if and only if there exists a finite collection $K$ of $n$-cylinders $a$ such that $m(\cup_{a \in K} a) > r$ and $\omega \in \cap_{a \in K}\Pi_a$. Letting $\Delta$ denote the set of all finite collections $K$ of $n$-cylinders such that $m(\cup_{a \in K}a) > r$ it follows that
\begin{equation}\label{eq:Mes1}
\{\omega: m(\{x: U_{0,s}(x, \omega) = R_s, ~\mathrm{for}~s = 0, \ldots, n\}) > r\} = \bigcup_{K \in \Delta}\bigcap_{a \in K}\Pi_a.
\end{equation}
Since $\Delta$ is countable, and it follows that $\cup_{K \in \Delta}\cap_{a \in K} \Pi_a$ is measurable, and so (\ref{eq:Mes1}) establishes the result.
\end{proof}

As a consequence of Lemma \ref{pro:MeasurabilityofDWDE} we have the following corollary.
\begin{cor}\label{cor:MeasR}
Given $n \geq 1$, $l, k \in \Z$, and $r \in [0, 1]$, the set $\{\omega: m(\{x: U_{l,j}(x, \omega) \leq k, \mathrm{for~some}~1 \leq j \leq n\}) > r\}$ is measurable.
\end{cor}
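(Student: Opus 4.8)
The plan is to reduce the statement to the case $l = 0$, which is essentially the content of Lemma~\ref{pro:MeasurabilityofDWDE}, and then to express the event $\{x : U_{l,j}(x,\omega) \le k \text{ for some } 1 \le j \le n\}$ as a \emph{countable disjoint union} over the finite-step trajectories of the walk. First I would establish the shift relation $U_{l,j}(x,\omega) = l + U_{0,j}(x, \eta^l\omega)$, which follows directly from the recursion (\ref{eq:DWDE2}) together with $f(\omega)(x,i) = \phi(\eta^i\omega)(x)$: writing $W_j := U_{l,j}(x,\omega) - l$ one checks $W_0 = 0$ and $W_{j+1} = W_j + \phi(\eta^{W_j}(\eta^l\omega))(T^j x)$, which is exactly the recursion defining $U_{0,j}(x, \eta^l\omega)$. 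Consequently the set in the statement equals $(\eta^l)^{-1}(A)$, where $A := \{\omega' : m(\{x : U_{0,j}(x,\omega') \le k - l \text{ for some } 1 \le j \le n\}) > r\}$; since $\eta$ is invertible and measurable, $\eta^l$ is measurable for every $l \in \Z$, so it suffices to prove that $A$ is measurable. Thus I may assume $l = 0$ and replace $k$ by $k - l$.

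For fixed $\omega$, every $x \in X$ has a well-defined finite trajectory $(U_{0,1}(x,\omega), \ldots, U_{0,n}(x,\omega)) \in \Z^n$, so the sets
\[
\Gamma_R(\omega) := \{x : U_{0,s}(x,\omega) = R_s, \ s = 1, \ldots, n\}, \qquad R = (R_1, \ldots, R_n) \in \Z^n,
\]
are pairwise disjoint (and measurable, being unions of rank-$n$ cylinders) and partition $X$. The event whose measure we must control is exactly the union of those $\Gamma_R(\omega)$ for which $\min_{1 \le j \le n} R_j \le k$, and this index set is a countable subset of $\Z^n$. Reading Lemma~\ref{pro:MeasurabilityofDWDE} in the form in which it was proved — prescribing the increments $k_0, \ldots, k_{n-1}$ is the same as prescribing the partial sums $R_s = \sum_{i < s} k_i$ — it says precisely that for each path $R$ and each $r$ the set $\{\omega : m(\Gamma_R(\omega)) > r\}$ is measurable; equivalently, each function $g_R : \omega \mapsto m(\Gamma_R(\omega))$ is measurable (for $r \ge 1$ the set is empty, and for $r < 0$ it is all of $\Omega$).

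Finally, by countable additivity of $m$ over the disjoint family $\{\Gamma_R(\omega)\}$,
\[
m(\{x : U_{0,j}(x,\omega) \le k \text{ for some } 1 \le j \le n\}) = \sum_{R : \min_{1 \le j \le n} R_j \le k} g_R(\omega),
\]
a countable sum of nonnegative measurable functions, hence itself a measurable function of $\omega$ (being the increasing limit of its finite partial sums). Therefore $A = \{\omega : \sum_R g_R(\omega) > r\}$ is measurable, which by the reduction above completes the argument. The one point requiring care — and the only genuine obstacle — is the passage from the single-path statement of the Lemma to the union over paths: it is crucial that the trajectories partition $X$, so that the measure of the union is the \emph{countable} sum of the path-measures $g_R$, since the Lemma controls only each $\{m(\Gamma_R) > r\}$ and not directly the measure of the union. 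Notably, bounded jumps are not needed here precisely because this sum is countable regardless of how large the individual increments may be.
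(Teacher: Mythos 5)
Your argument is correct and is essentially the derivation the paper intends (the paper states the corollary without proof as a consequence of Lemma \ref{pro:MeasurabilityofDWDE}): reduce to $l=0$ via the shift identity $U_{l,j}(x,\omega)=l+U_{0,j}(x,\eta^l\omega)$, which is exactly the relation (\ref{eq:itin0k}) used later in the paper, and then decompose the event over the countably many trajectory classes $\Gamma_R(\omega)$, each of which the Lemma makes measurable in $\omega$. You correctly identify and handle the one delicate point, namely that the $\Gamma_R(\omega)$ are pairwise disjoint measurable sets partitioning $X$, so that $\omega\mapsto m(\bigcup_R\Gamma_R(\omega))$ is a countable sum of the measurable functions $g_R$ and hence measurable.
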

Define $\tilde{\beta} := \beta \times \Z$.

\begin{defn}
We say that the DWDE is \emph{irreducible} if for $P$-a.e.\ $\omega \in \Omega$, the Markov partition $\tilde{\beta}$ is irreducible under the skew-product $T_{f(\omega)}$.
\end{defn}

\begin{defn}
We say that the DWDE is \emph{recurrent} if for $P$-a.e.\ $\omega \in \Omega$, the deterministic walk in the environment $(\ldots, \phi(\eta^{-1}\omega), \phi(\omega), \phi(\eta\omega), \ldots)$ is recurrent. Similarly, we say that the DWDE is \emph{transient} if for $P$-a.e.\ $\omega \in \Omega$, the deterministic walk in the environment $(\ldots, \phi(\eta^{-1}\omega), \phi(\omega), \phi(\eta\omega), \ldots)$ is transient.
\end{defn}
Recall that the DWDE has \emph{bounded jumps} if for all $g \in G$, the set $g(X)$ is finite.

We may now state and prove the first main result of this section.
\begin{thm}\label{thm:4Case0-1Law}
If the DWDE is irreducible, has bounded jumps, and has an ergodic and stationary environment, then it is either recurrent or transient. Moreover, in the transient case, exactly one of the following holds.
\begin{enumerate}
\item[(i)] $\lim_{n \rightarrow \infty} U_{i,n}(x, \omega) = + \infty$ for all $i \in \Z$, for $P$-a.e.\ $\omega \in \Omega$ and for $m$-a.e.\ $x \in X$.
\item[(ii)] $\lim_{n \rightarrow \infty} U_{i,n}(x, \omega) = - \infty$ for all $i \in \Z$, for $P$-a.e.\ $\omega \in \Omega$ and for $m$-a.e.\ $x \in X$.
\item[(iii)] For $P$-a.e.\ $\omega \in \Omega$, and for all $i \in \Z$,
\[
m(\{x: \lim_{n \rightarrow \infty} U_{i,n}(x, \omega) = \infty\}) + m(\{x: \lim_{n \rightarrow \infty} U_{i,n}(x, \omega) = -\infty\}) = 1,
\]
and $0 < m(\{x: \lim_{n \rightarrow \infty} U_{i,n}(x, \omega) = \infty\}) < 1$.
\end{enumerate}
\end{thm}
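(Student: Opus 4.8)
The plan is to run two ergodicity arguments on the environment space $(\Omega, P, \eta)$, taking the fixed-environment dichotomy of Theorem~\ref{cor:NoSemibdd} as the starting point. First I would record the shift conjugacy between walks in $\omega$ and $\eta\omega$: since $f_i = \phi\circ\eta^i(\omega)$ gives $\phi\circ\eta^i(\eta\omega) = f_{i+1}$, one checks directly from (\ref{eq:DWDE2}) that $U_{i,n}(x,\eta\omega) = U_{i+1,n}(x,\omega) - 1$ for all $n$. Irreducibility makes $\tilde\beta$ a single closed communication class with $\pi_2(\tilde\beta)=\Z$ for $P$-a.e.\ $\omega$, so Theorem~\ref{cor:NoSemibdd} gives, for each such $\omega$, either recurrence (with $\limsup U = +\infty$ and $\liminf U = -\infty$) or transience (with $|U_{i,n}(x,\omega)|\to\infty$). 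Writing $R = \{\omega: \text{the environment is recurrent}\}$, the conjugacy together with the fact (Lemma~\ref{cor:Transience}) that in a fixed environment all states are recurrent or all are transient shows $\omega\in R \iff \eta\omega\in R$, with measurability of $R$ following from Lemma~\ref{pro:MeasurabilityofDWDE} and Corollary~\ref{cor:MeasR}. Ergodicity of $\eta$ then forces $P(R)\in\{0,1\}$, so the DWDE is recurrent or transient.

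For the transient case I would work in $X\times\Z$ and set $\mathcal P := \{(x,i): U_{i,n}(x,\omega)\to+\infty\}$ and $\mathcal M := \{(x,i): U_{i,n}(x,\omega)\to-\infty\}$. Because the jumps are bounded, $|U_{i,n}(x,\omega)|\to\infty$ prevents the walk from oscillating between arbitrarily large positive and negative values, so each fibre equals $\mathcal P\sqcup\mathcal M$ up to a $\mu$-null set; put $p_+(\omega,i) := m(\{x:(x,i)\in\mathcal P\})$ and $p_-(\omega,i) := 1 - p_+(\omega,i)$. The key structural fact is that $\mathcal P$ and $\mathcal M$ are \emph{exactly} $T_f$-invariant: from the identity $U_{\pi_2(T_f(x,i)),\,n}(Tx) = U_{i,n+1}(x)$ one reads off $T_f^{-1}\mathcal M = \mathcal M$ and $T_f^{-1}\mathcal P = \mathcal P$.

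The main obstacle is to show that the ``type'' of $\omega$ is independent of the starting state, i.e.\ that for a.e.\ transient $\omega$ the sets $\{i: p_+(\omega,i)=1\}$ and $\{i: p_+(\omega,i)=0\}$ are each either empty or all of $\Z$. I would prove the leakage statement: if $\mathcal M$ charges one fibre $j$ (some $m(B_j\cap a')>0$ with $a'\in\beta$, where $B_j\times\{j\}=\mathcal M\cap(X\times\{j\})$), then it charges every fibre. Fix a target $b = a\times\{i\}$; by irreducibility $b$ communicates with $b' := a'\times\{j\}$, so some rank-$L$ cylinder $w\subseteq b$ satisfies $b'\subseteq T_f^L w$ (a union of elements of $\tilde\beta$, by Proposition~\ref{pro:SkewMarkov}). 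Proposition~\ref{pro:GibbsPropimpliesLeakage} then transfers the positive measure of $\mathcal M\cap b'$ back along $w$, giving $\mu\big(w\cap T_f^{-L}(\mathcal M\cap b')\big)>0$, and backward invariance of $\mathcal M$ places this set inside $\mathcal M\cap b$, so $\mathcal M$ charges fibre $i$. The same argument applies to $\mathcal P$. Hence for each transient $\omega$ exactly one alternative holds uniformly in $i$: $\mathcal M$ is $\mu$-null ($p_+\equiv1$), $\mathcal P$ is $\mu$-null ($p_+\equiv0$), or both charge every fibre ($0<p_+(\omega,i)<1$ for all $i$).

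Finally I would promote this fibre-independence to an environment zero-one law. The conjugacy gives $h(\eta\omega)=p_+(\omega,1)$, where $h(\omega):=p_+(\omega,0)$, so the leakage lemma yields $h(\omega)=1 \iff p_+(\omega,i)=1\ \forall i \iff h(\eta\omega)=1$, and likewise for the value $0$; thus $\{h=1\}$, $\{h=0\}$ and $\{0<h<1\}$ are $\eta$-invariant (and measurable via Corollary~\ref{cor:MeasR}). Ergodicity of $\eta$ makes exactly one of them have full measure, which, combined with the uniformity in $i$ above, is precisely cases (i), (ii), (iii). I expect the leakage lemma --- in particular arranging the covering $b'\subseteq T_f^L w$ and invoking distortion to move positive measure between fibres --- to be the crux; the convergence-to-one-sign argument and the measurability bookkeeping are routine given bounded jumps and the technical lemmas.
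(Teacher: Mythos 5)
Your proposal is correct and follows essentially the same route as the paper: the conjugacy $U_{k,n}(x,\omega)=U_{0,n}(x,\eta^k\omega)+k$, irreducibility plus non-singularity/distortion to propagate positive measure of divergence from one fibre to all fibres, and ergodicity of $\eta$ to turn the resulting $\eta$-invariance of the environment sets into a zero-one law, with Theorem \ref{cor:NoSemibdd} supplying the fixed-environment dichotomy. The paper organizes this around the sets $R^{\pm}=\{\omega: m(\{x:U_{0,n}\to\pm\infty\})=0\}$ rather than your recurrence set $R$ and the exactly $T_f$-invariant sets $\mathcal P,\mathcal M$, but the ingredients and their roles are the same.
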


\begin{proof}
Let $R^+ := \{\omega: m(\{x: \lim_{n \rightarrow \infty} U_{0,n}(x, \omega) = + \infty\}) = 0\}$. Thus, $R^+$ denotes the set of environments for which the measure of the set of walks that diverge to $+\infty$ when started in state 0, is zero. It follows from Corollary \ref{cor:MeasR} that $R^+$ is $P$-measurable. We show that if $\omega \in R^+$ then for all $k \in \Z$, 
\begin{equation}\label{eq:limk0}
m(\{x: \lim_{n \rightarrow \infty} U_{k,n}(x, \omega) = \infty\}) = 0.
\end{equation}
Suppose to the contrary that $\omega \in R^+$ and that there exist $k \neq 0$ such that 
\[
m(\{x: \lim_{n \rightarrow \infty} U_{k,n}(x, \omega) = \infty\}) > 0.
\]
Then there exists $a \in \beta$ and a set $A \subset a$, such that $m(A) > 0$ and for all $x \in A$ 
\[
\lim_{n \rightarrow \infty} U_{k,n}(x, \omega) = \infty.
\] 
By the irreducibility of $\tilde{\beta}$, for each $b \in \beta$ there exists an admissible cylinder $[b \times \{0\}, \ldots, a \times \{k\}]$. It follows from the non-singularity of $T_f$, that $\mu([b \times \{0\}, \ldots, A \times \{k\}]) > 0$, but this contradicts the fact that $\omega \in R^+$, and so we have established (\ref{eq:limk0}).

Using the fact that for all $k \in \Z$ and for all $x \in X$
\begin{equation}\label{eq:itin0k}
U_{k,n}(x, \omega) = U_{0,n}(x, \eta^k (\omega)) + k
\end{equation}
it is now immediate from (\ref{eq:limk0}) and (\ref{eq:itin0k}) that if $\omega \in R^+$ then $\eta^k (\omega) \in R^+$ for all $k \in \Z$. In particular, $\omega \in R^+$ if and only if $\eta(\omega) \in R^+$, from which it follows that $\eta^{-1}(R^+) = R^+$. Since the $\eta$-invariant measure $P$ is ergodic, it follows from Birkhoff's Ergodic Theorem that $P(R^+) = 0~\mathrm{or}~1$.
 
An identical argument shows that the set 
\[
R^- : = \{\omega: m(\{x: \lim_{n \rightarrow \infty} U_{0,n}(x, \omega) = - \infty\}) = 0\}
\]
has $P$-measure of 0 or 1. 

If $P(R^+) = P(R^-) = 1$, then since the DWDE has bounded jumps it follows from Theorem \ref{cor:NoSemibdd}(i) that the DWDE is recurrent. Alternatively, in the transient cases we have the following
\begin{eqnarray*}
P(R^+) = 0,~P(R^-) = 1 & \Rightarrow & ~\mathrm{case~(i)}\\
P(R^+) = 1,~P(R^-) = 0 & \Rightarrow & ~\mathrm{case~(ii)}\\
P(R^+) = 0,~P(R^-) = 0 & \Rightarrow & ~\mathrm{case~(iii)}.
\end{eqnarray*}
This concludes the proof.
\end{proof}

\begin{defn}
Given a collection of cylinder sets $v_1, v_2, \ldots, v_k$ we define
\[
v_1\cdot v_2 \cdot \ldots \cdot v_k := v_1 \cap T^{-|v_1|}(v_2 \cap T^{-|v_2|}(\ldots (T^{-|v_{k-1}|}(v_k))\ldots )).
\]
\end{defn}

\begin{defn}\label{def:Linkage}
We say that a DWDE has the \emph{Linkage Property} if it is irreducible and there exists $r > 0$ such that for almost every environment and all $a, b \in \tilde{\beta}$ such that $|\pi_2(a)-\pi_2(b)| = 1$, either $b \subset T_f(a)$, or there exists a cylinder $c$ such that
\begin{itemize}
\item[(a)] $\mu(c) \geq r$, and
\item[(b)] the cylinder $a \cdot c \cdot b$ is admissible.
\end{itemize}
\end{defn}
The Linkage Property holds for a large class of DWDEs. 
\begin{examp}\label{ex:Linkage}
Suppose that a DWDE on $\Z$ satisfies (i) $ 2 \leq \# \beta < \infty$ and (ii) for all $g \in G$, $g(X) = \{+1, -1\}$, and that the base transformation $T$ has full-branches (i.e.\ $Ta = X$ for all $a \in \beta$). Then the DWDE has the Linkage Property. 

To see this we fix an environment $(f_i)_{i \in \Z}$, and fix $a = a' \times \{i\} \in \tilde{\beta}$, and suppose that $f_i(a') = +1$. (The case where $f_i(a') = -1$ is identical.) If $b = b' \times \{i+1\}$ for some $b' \in \beta$ then, since $T$ is full-branch, it is immediate that $T_f(a) = X \times \{i + 1\} \supset b$. Instead, suppose that $b = b' \times  \{i-1\}$ for some $b' \in \beta$. By (ii) there exists $c_1 = c'_1 \times \{i+1\}, c_2 = c'_2 \times \{i\} \in \tilde{\beta}$ such that $f_{i+1}(c'_1) = f_i(c'_2) = -1$. Since $T$ is full-branch it follows that $a \cdot c_1 \cdot c_2 \cdot b$ is admissible, and since $\# \beta < \infty$ it follows that the DWDE has the Linkage Property, as claimed.
\end{examp}

\begin{pro}\label{pro:Linkage}
If the DWDE has the Linkage Property, then there exist positive numbers $\{r_k\}_{k \geq 0}$ such that for every environment, and for all $a, b \in \tilde{\beta}$ such that $|\pi_2(a) - \pi_2(b)| \leq k$, there exists a cylinder $c$ such that $\mu(c) \geq r_k$ and $a \cdot c \cdot b$ is admissible.
\end{pro}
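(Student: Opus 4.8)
The plan is to reduce a general fibre-distance $|\pi_2(a)-\pi_2(b)| \le k$ to a chain of unit steps of bounded length, handle each unit step with the Linkage Property (Definition \ref{def:Linkage}), and control the measure of the resulting middle cylinder by a single distortion estimate. First I would record the key lemma. Since $T_f$ is Markov and has the Strong Distortion Property (Proposition \ref{pro:T_fisMGF}), Proposition \ref{pro:GibbsPropimpliesLeakage} applies to $T_f$ and supplies a constant $C \ge 1$ satisfying the comparison (\ref{eqn:Distortion}). I claim that whenever $u, w$ are cylinders with $u \cdot w$ admissible,
\[
\mu(u \cdot w) \ge C^{-1}\,\mu(u)\,\mu(w).
\]
Indeed, $T_f^{|u|}u$ is a union of elements of $\tilde{\beta}$, and admissibility of $u \cdot w$ forces the first symbol of $w$ to meet $T_f^{|u|}u$ in positive measure, hence (being an atom of $\tilde{\beta}$) to be contained in it; since $w$ lies in its first symbol, $w \subset T_f^{|u|}u$ and $\mu(w \cap T_f^{|u|}u) = \mu(w)$. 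The lower bound in (\ref{eqn:Distortion}) with $B = w$, $n=|u|$, together with $\mu(T_f^{|u|}u) \le 1$ from (\ref{eqn:bddimage}), gives the claim. Iterating, any admissible concatenation $p_1 \cdot p_2 \cdots p_N$ satisfies $\mu(p_1 \cdots p_N) \ge C^{-(N-1)}\prod_{i} \mu(p_i)$.

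Next I would fix once and for all a partition element $a_{*} \in \beta$ with $m(a_{*}) > 0$ (one exists since $\sum_{a \in \beta} m(a) = 1$) and use its fibre-translates as waypoints. Given $a, b \in \tilde{\beta}$ with $|\pi_2(a)-\pi_2(b)| = d \le k$, I would choose a sequence of states $\pi_2(a) = s_0, s_1, \ldots, s_L = \pi_2(b)$ with $|s_{t+1}-s_t| = 1$ and $2 \le L \le k+3$, together with waypoints $w_0 = a$, $w_L = b$, and $w_t := a_{*} \times \{s_t\}$ for $0 < t < L$. For each consecutive pair the Linkage Property provides either a direct step $w_{t+1} \subset T_f(w_t)$, or a cylinder $c^{(t)}$ with $\mu(c^{(t)}) \ge r$ and $w_t \cdot c^{(t)} \cdot w_{t+1}$ admissible. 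I then take the middle cylinder $c$ to be the ordered concatenation of the linking cylinders $c^{(t)}$ that occur, interleaved with the intermediate waypoints $w_1, \ldots, w_{L-1}$, so that $a \cdot c \cdot b$ is precisely the cylinder read off the path. Admissibility of $a \cdot c \cdot b$ follows by inductively chaining the admissible links: the shared waypoint symbols make the junctions compatible, because the prefix of each admissible link forces its image to contain the next symbol, and $T_f^{n}$ of an admissible cylinder equals the image of its last symbol. The lemma then yields $\mu(c) \ge C^{-(N-1)}\rho^{N}$, where $\rho := \min(r, m(a_{*})) > 0$ and $N \le 2L - 1$ is the (uniformly bounded) number of pieces; setting $r_k := C^{-(2L-2)}\rho^{2L-1}$ with the worst-case $L = k+3$ completes the estimate.

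The main obstacle is the degenerate cases $d = 0$ and the direct step when $d = 1$: here a naive monotone path of length $d$ produces an empty middle cylinder, so there is no admissible $c$ with $\mu(c) \ge r_k > 0$. The device that resolves this is to insist on $L \ge 2$, i.e. to route through at least one intermediate waypoint. For a same-fibre link one goes up one state and back (forcing the symbol $a_{*} \times \{\pi_2(a)+1\}$ into $c$), and a parity argument shows $d=1$ requires $L=3$ rather than $L=1$ once the direct step is disallowed. Because $a_{*}$ is fixed with $m(a_{*}) > 0$, each such inserted waypoint contributes a factor bounded below independently of $a$, $b$, and the environment, so $c$ is genuinely nonempty with $\mu(c)$ bounded below.

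Finally I would note that the construction uses only the defining condition of the Linkage Property, which holds for $P$-a.e.\ environment; the numbers $r_k$ therefore work uniformly across that full-measure set of environments, with $N$ (and hence $r_k$) depending on $k$ but not on $a$, $b$, or the environment.
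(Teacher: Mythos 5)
Your proof is correct and follows essentially the same route as the paper's: fix a reference partition element, use its fibre translates as waypoints, link consecutive waypoints by the Linkage Property, and bound the measure of the concatenation by iterating the distortion estimate $\mu(u \cdot w) \geq C^{-1}\mu(u)\mu(w)$ obtained from Propositions \ref{pro:T_fisMGF} and \ref{pro:GibbsPropimpliesLeakage} together with (\ref{eqn:bddimage}). Your explicit treatment of the degenerate direct-step case (where $b \subset T_f(a)$ and the Linkage Property supplies no middle cylinder, so one must detour through an extra waypoint) is a detail the paper's proof passes over, but it does not alter the substance of the argument.
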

\begin{proof}
Fix an arbitrary environment. Given $a \in \tilde{\beta}$ let $\pi_2(a)$ denote  the $\Z$-component of $a$, and $a'$ denote the $\beta$-component of $a$. Since the case $|\pi_2(a) - \pi_2(b)| = 1$ is immediate from the Linkage Property, we deal in turn with the cases where $\pi_2(b) - \pi_2(a) = 0$ and $\pi_2(b) - \pi_2(a) = k \geq 2$ - the case where $\pi_2(a) - \pi_2(b) = k \geq 2$ is proven similarly. Fix $d' \in \beta$ and for $k \in \Z$ define $d_k := d' \times \{k\}$. Define $\kappa := m(d')$.

Let $\pi(b) - \pi(a) = 0$ and suppose that $f_{\pi(a)}(a') = +1$. The case where $f_{\pi(a)}(a') = -1$ is identical. By the Linkage Property there exist cylinders $v_1, v_2$ such that $\mu(v_1) > r$ and 	$\mu(v_2) > r$, and $a \cdot v_1 \cdot d_{\pi(a)+1} \cdot v_2 \cdot b$ is admissible. By Proposition \ref{pro:T_fisMGF}, Proposition \ref{pro:GibbsPropimpliesLeakage}, and equation (\ref{eqn:bddimage})
\begin{eqnarray*}
\mu(v_1 \cdot d_{\pi(a)+1} \cdot v_2) & \geq & C^{-1} \mu(v_1) \mu(d_{\pi(a)+1} \cdot v_2)\\
                                    & \geq & C^{-2}\mu(v_1)\mu(d_{\pi(a)+1})\mu(v_2)\\
                                    & \geq & C^{-2}r^2\kappa > 0.
\end{eqnarray*}
This establishes the result for the case $\pi_2(b) - \pi_2(a) = 0$.

Assume that $\pi_2(b) - \pi_2(a) = k \geq 2$. By a similar argument to the case where $\pi_2(b) - \pi_2(a) = 0$ above, there exist cylinders $v_1, \ldots, v_{k}$, such that $\mu(v_i) > r$ for $i = 1, \ldots, k$, and $a \cdot v_1 \cdot d_{\pi(a)+1} \cdot v_2 \cdot \ldots \cdot v_{k-1} \cdot d_{\pi(a)+k-1} \cdot v_k \cdot b$ is admissible. As before, by Proposition \ref{pro:T_fisMGF}, Proposition \ref{pro:GibbsPropimpliesLeakage} and (\ref{eqn:bddimage}) we have
\begin{eqnarray*}
\mu( v_1 \cdot d_{\pi(a)+1} \cdot \ldots \cdot v_{k-1} \cdot d_{\pi(a)+k-1} \cdot v_k) & \geq & (C^{-1})^{2k-2} \mu(v_1) \ldots \mu(v_{k}) \kappa^{k-1}\\
                                  									       & \geq & (C^{-1})^{2k-2} r^{k} \kappa^{k-1} > 0.
\end{eqnarray*}
Taking $r_k := (C^{-1})^{2k-2} r^{k} \kappa^{k-1}$ establishes the result.
\end{proof}

\begin{defn}
We say that the DWDE has \emph{uniformly bounded jumps} if there exists a finite set $J \subset \Z$ such that for all $g \in G$, $g(X) \subset J$.
\end{defn}
We are now in a position to state and prove our main result.

\begin{thm}\label{thm:nosplitcase}
Suppose that the DWDE satisfies the Linkage Property with uniformly bounded jumps and an ergodic and stationary environment. If the DWDE is transient then either case (i) or case (ii) of Theorem \ref{thm:4Case0-1Law} holds. 
\end{thm}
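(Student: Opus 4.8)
The plan is to argue by contradiction: assume the DWDE is transient and that case (iii) of Theorem~\ref{thm:4Case0-1Law} holds, and derive a contradiction from the Linkage Property together with stationarity. For $P$-a.e.\ $\omega$ and each $i \in \Z$ write $\rho(i,\omega) := m(\{x: \lim_n U_{i,n}(x,\omega) = +\infty\})$ and $R_i^- := \{x: \lim_n U_{i,n}(x,\omega) = -\infty\}$, $q(i,\omega) := m(R_i^-)$; in the transient case $|U_{i,n}| \to \infty$ a.e., so $R_i^-$ and its complement partition $X$ up to a null set and $q(i,\omega) = 1-\rho(i,\omega)$. From the identity (\ref{eq:itin0k}) one gets $q(k,\omega) = q(0,\eta^k\omega)$, so $(q(k,\omega))_{k \in \Z}$ is an ergodic stationary sequence, and case (iii) asserts precisely that $q(0,\omega) \in (0,1)$ a.e. My aim is to show that a walk started at $0$ which diverges to $+\infty$ must, at infinitely many times and with conditional probability bounded below, defect to $-\infty$; a leakage estimate in the spirit of Lemma~\ref{lem:Trans_iozero} then forces $m(\{x: \lim_n U_{0,n} = +\infty\}) = 0$, contradicting case (iii).

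The crux is a \emph{spreading} estimate supplied by the Linkage Property: there is a uniform $C' \ge 1$ such that for a.e.\ environment, every $k \in \Z$ and every $a' \in \beta$, the conditional density of the divergence set obeys $m(R_k^- \cap a')/m(a') \ge (C')^{-1} q(k,\omega)$. To prove this I would first show these densities are comparable across all partition elements at a fixed height: given $a', b' \in \beta$, Proposition~\ref{pro:Linkage} (with $\pi_2$-distance $0$) yields a bridge cylinder $c$ with $\mu(c) \ge r_0$ for which $(b' \times \{k\}) \cdot c \cdot (a' \times \{k\})$ is admissible, and the distortion inequality (\ref{eqn:Distortion}) of Proposition~\ref{pro:GibbsPropimpliesLeakage}, applied to $T_f$ via Proposition~\ref{pro:T_fisMGF}, then gives $m(R_k^- \cap b')/m(b') \ge C^{-2} r_0 \cdot m(R_k^- \cap a')/m(a')$. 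Since $m$ is a probability measure and $q(k,\omega) = \sum_{a' \in \beta} m(R_k^- \cap a')$ is a weighted average (weights $m(a')$) of these comparable densities, each density is bounded below by a uniform multiple of $q(k,\omega)$, as claimed. This is the one step that genuinely uses the Linkage Property, and it is what rules out concentration of $R_k^-$ away from the cells the walk actually lands on.

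Granting the spreading estimate, I would use ergodicity to produce good crossing sites. As $q(0,\cdot) \in (0,1)$ a.e., there is $\delta > 0$ with $P(q(0,\cdot) \ge \delta) > 0$, and the ergodic theorem gives, for a.e.\ $\omega$, infinitely many \emph{balanced} states $N_1 < N_2 < \cdots \to +\infty$ with $q(N_j,\omega) \ge \delta$. A walk from $0$ diverging to $+\infty$ has uniformly bounded jumps, hence crosses every such $N_j$. At the first passage to $N_j$ along a cylinder $v$, the base image $T^{|v|}v$ is a union of partition elements of measure $\ge \ep$ by the Big Image property and (\ref{eqn:bigimage1}); combining the distortion inequality with the spreading estimate shows that, conditionally on the past, the walk subsequently diverges to $-\infty$ with probability at least $C^{-1}(C')^{-1}\delta =: c > 0$, uniformly in $j$. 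Defecting to $-\infty$ is incompatible with divergence to $+\infty$, so an iteration identical in form to the decay estimate (\ref{eqn:DecayofCr1}) shows the measure of walks reaching $N_1, \dots, N_k$ without ever diverging to $-\infty$ is at most $(1-c)^k$. The set of walks diverging to $+\infty$ lies in every such set, hence has measure $0$, contradicting case (iii) and leaving only cases (i) and (ii).

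The main obstacle is the spreading estimate and its conversion into a uniform lower bound on the defection probability: case (iii) furnishes $q(k,\omega) \in (0,1)$ only pointwise in $k$, with no a priori uniform positive lower bound (which is why the ergodic density of balanced states, rather than a single bound, is needed), and the walk arrives at $N_j$ not at a fresh sample but at the image point $T^{|v|}x$ lying in a particular union of partition elements, so that without Linkage the set $R_{N_j}^-$ could in principle avoid exactly the cells the walk lands in. The Linkage Property is precisely the hypothesis preventing such concentration; making the constants genuinely uniform over the infinitely many heights $k$ and over a.e.\ environment, and checking the requisite measurability of $R_k^-$ (via the arguments behind Corollary~\ref{cor:MeasR}), is where the technical care will be needed.
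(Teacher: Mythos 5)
Your overall architecture (contradict case (iii) by finding, via ergodicity, infinitely many sites with defection probability bounded below, and then running a leakage iteration powered by the Linkage Property) is close in spirit to the paper's proof, which works with the taboo-hitting sets $_jA_{i,k}$ and the sets $E^+, D^\pm$. But the iteration at the heart of your argument is broken. The event you leak into at the $j$th crossing --- ``the walk diverges to $-\infty$ from the first passage to $N_j$'' --- is a \emph{tail} event, and since divergence to $-\infty$ is a property of the whole trajectory, it is the \emph{same} event $G := \{x : \lim_n U_{0,n}(x,\omega) = -\infty\}$ (intersected with $F_j := \{$reach level $N_j\}$) for every $j$. The geometric bound $(1-c)^k$ in Lemma \ref{lem:Trans_iozero} and (\ref{eqn:DecayofCr1}) works because the leaked portions at successive stages are pairwise disjoint and each leakage is registered by a finite-length cylinder, so the surviving set genuinely shrinks by a factor $(1-\delta)$ at each stage. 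Here the leaked sets coincide, so a single application of your crossing estimate gives only $\mu(F_k \cap G) \ge c\,\mu(F_k)$, i.e.\ $\mu(G) \ge c\,\mu(H)$ where $H$ is the set of walks diverging to $+\infty$ --- which is perfectly consistent with case (iii) and yields no contradiction. To repair this one must either replace the tail event by finite-horizon taboo events (``return to level $0$ before reaching $N_{j+1}$,'' which is what the paper's sets $_{-n_{j+1}}A_{-n_j,0}$ encode, with disjointness of the leaked sets built in), or upgrade the crossing estimate to a conditional-expectation statement $\mu(G \mid \mathcal{F}_{j}) \ge c$ on $F_j$ and invoke L\'evy's zero--one law; neither step appears in your proposal.

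A secondary gap lies in the spreading estimate itself. Bridging from $b'$ to a single cell $a'$ via Proposition \ref{pro:Linkage} gives $m(R_k^- \cap b')/m(b') \gtrsim r_0\, m(a')\cdot m(R_k^-\cap a')/m(a') = r_0\, m(R_k^-\cap a')$, and the bridge cylinders for different targets $a'$ need not be disjoint subsets of $b'$, so you cannot sum these bounds to recover $q(k,\omega) = \sum_{a'} m(R_k^-\cap a')$; you only control $\sup_{a'} m(R_k^-\cap a')$, which for an infinite Markov partition (permitted under the standing hypotheses, which assume Big Images but not $\inf_a m(a) > 0$) carries no uniform lower bound in terms of $q(k,\omega)$. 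The paper circumvents exactly this by decomposing the escape set according to the \emph{last} visit to $\Lambda_{-n_j}$, which makes the bridged sets $a\cdot D_t\cdot B_t$ pairwise disjoint so that the weights $p_t$ sum to $1$. Finally, a minor point: with jumps bounded by $M$ the walk need not hit the exact site $N_j$, only the window $[N_j, N_j+M)$, which is why the paper works with the blocks $\Lambda_j$ rather than individual states; your argument needs the defection bound at the actual landing state, not at $N_j$.
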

Before outlining our strategy for proving Theorem \ref{thm:nosplitcase} we introduce the following prerequisite definitions.

\begin{defn}
Let $M := \max\{|g(x)|: x \in X, g \in G\}$, and for $j \in \Z$ define
\begin{equation}\label{eqn:Lambda_j}
\Lambda_j := \{(x, n) \in X \times \Z: jM \leq n \leq (j+1)M - 1\}.
\end{equation}
\end{defn}
Clearly, for all $i \in \Z$, $\mu(\Lambda_i) = M$.

\begin{defn}
Define 
\[
D^+ := \{\omega \in \Omega: \mu(\{(x, i) \in \Lambda_0: \lim_{n \rightarrow \infty} U_{i,n}(x, \omega) = + \infty\}) > 0\}.
\]
\end{defn}
The set $D^+$ denotes the set of environments for which the deterministic walk diverges to $+\infty$ with positive probability. Thus, $D^+ = (R^+)^c$ and it follows that $D^+$ is also $P$-measurable. 

Analogously, define 
\[
D^- := \{\omega \in \Omega: \mu(\{(x, i) \in \Lambda_0: \lim_{n \rightarrow \infty} U_{i,n}(x, \omega) = - \infty\}) > 0\}
\]
and by identical reasoning, $D^- = (R^-)^c$ and so $D^-$ is $P$-measurable. Moreover, since the hypotheses of Theorem \ref{thm:nosplitcase} imply those of Theorem \ref{thm:4Case0-1Law} it follows that $P(D^+) = 0~\mathrm{or}~1$ and $P(D^-) = 0~\mathrm{or}~1$.

\begin{defn}
For all $i, j, k \in \Z$, $n \geq 0$, and $\omega \in \Omega$ define taboo-hitting-time sets
\[
_jA_{i,k}^n(\omega) := \{x \in \Lambda_i: T_f^n(x) \in \Lambda_k, ~\& ~T_f^r(x) \notin \Lambda_k \cup \Lambda_j ~\mathrm{for}~r = 1, \ldots, n-1\},
\]
Also define
\[
_jA_{i,k}(\omega) := \cup_{n \geq 1} (_jA_{i,k}^n(\omega)).
\]
\end{defn}
Under the standing hypotheses, for all $\omega \in \Omega$ the set $_jA^n_{i, k}(\omega)$ is a union of cylinders in $\cup_{r \geq 1}\tilde{\beta}_r$, and is therefore $\mu$-measurable. By extension, for all $\omega \in \Omega$ the set $_jA_{i, k}(\omega)$ is also $\mu$-measurable.

\begin{defn} 
Define 
\[
E^+ := \{\omega \in \Omega: \exists p > 0,~ \mathrm{s.t.~for~infinitely~many}~k \geq 1, ~\mu(_{-k}A_{-k,0}(\omega)) \geq p\}.
\]
\end{defn}
From this definition we have that
\begin{equation}\label{eq:E+}
E^+ = \cup_{d \geq 1} \cap_{k \geq 1} \cup_{N = k}^{\infty}\{\omega: \mu(_{-k}A_{-k, 0}(\omega)) \geq d^{-1}\},
\end{equation}
and it follows from Corollary \ref{cor:MeasR} that $E^+$ is $P$-measurable.

\paragraph{Strategy for proving Theorem \ref{thm:nosplitcase}}
The proof of Theorem \ref{thm:nosplitcase} contains two main parts. Firstly, we prove (in Lemma \ref{lem:NoDiv-inf}) that $E^+ \cap D^- = \emptyset$. Secondly, we show (in Lemma \ref{lem:PD+PE+}) that if $P(D^+) = 1$ then $P(E^+) > 0$. The result then follows from Theorem \ref{thm:4Case0-1Law}. Suppose to the contrary that case (iii) of Theorem \ref{thm:4Case0-1Law} holds. Then $P(D^+) = 1$ and $P(D^-) = 1$, and it follows from Lemma \ref{lem:PD+PE+} that $P(E^+) > 0$. But by Lemma \ref{lem:NoDiv-inf} it follows that $P(E^+) = 0$, which is the desired contradiction. Hence, case (iii) cannot hold, establishing the result.

It remains to prove Lemma \ref{lem:NoDiv-inf} and \ref{lem:PD+PE+}. Firstly, we establish the following useful result.

\begin{lem}\label{pro:DecompD+}
Suppose that the DWDE is irreducible and that it has uniformly bounded jumps. Then 
\begin{itemize}
\item[(a)] $\omega \in D^-$ if and only if $\mu(\cap_{k \geq 1} (_{0}A_{0,-k}(\omega))) > 0$, and 
\item[(b)] $\omega \in D^+$ if and only if $\mu(\cap_{k \geq 1} (_{0}A_{0,k}(\omega))) > 0$.  
\end{itemize}
\end{lem}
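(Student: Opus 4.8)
The plan is to prove part (a); part (b) follows by the identical argument with the roles of $+\infty$ and $-\infty$, and of $\Lambda_k$ and $\Lambda_{-k}$, interchanged. Write $W^-(\omega) := \bigcap_{k \geq 1}({}_0A_{0,-k}(\omega))$ and $L^-(\omega) := \{(x,i) \in \Lambda_0 : \lim_{n} U_{i,n}(x,\omega) = -\infty\}$, so that by definition $\omega \in D^-$ if and only if $\mu(L^-(\omega)) > 0$. The goal is therefore to show $\mu(W^-(\omega)) > 0$ if and only if $\mu(L^-(\omega)) > 0$. First I would record the structural meaning of $W^-$. By the taboo condition defining ${}_0A_{0,-k}$, a point lies in $W^-(\omega)$ precisely when, for every $k \geq 1$, its $T_f$-orbit reaches $\Lambda_{-k}$ before it ever returns to $\Lambda_0$. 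Since jumps are uniformly bounded by $M$ and $\Lambda_0$ is a block of $M$ consecutive integers, a single step cannot carry the walk from a state $\geq M$ to a state $\leq -1$ (or vice versa) without landing in $\Lambda_0$; hence any orbit avoiding $\Lambda_0$ for all $n \geq 1$ stays on one side of $\Lambda_0$ forever. Combining this with the requirement that the orbit reach every $\Lambda_{-k}$, I would conclude that $x \in W^-(\omega)$ if and only if $U_{i,n}(x,\omega) \leq -1$ for all $n \geq 1$ and $\liminf_{n} U_{i,n}(x,\omega) = -\infty$; in particular such an orbit never returns to $\Lambda_0$.

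For the implication $\omega \in D^- \Rightarrow \mu(W^-(\omega)) > 0$ I would use a last-visit decomposition. If $x \in L^-(\omega)$ then $U_{i,n} \to -\infty$, so the orbit visits $\Lambda_0$ only finitely often; let $N(x)$ be its last visit time. Writing $L^-(\omega) = \bigsqcup_{N \geq 0} B_N$ with $B_N := \{x \in L^-(\omega): N(x) = N\}$, positivity of $\mu(L^-(\omega))$ forces $\mu(B_N) > 0$ for some $N$. For $x \in B_N$ the shifted point $T_f^N(x)$ lies in $\Lambda_0$, never afterwards returns to $\Lambda_0$, and still has walk diverging to $-\infty$, so $T_f^N(B_N) \subseteq W^-(\omega)$. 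Restricting $T_f^N$ to an $N$-cylinder on which it is a bijection with distortion controlled by Proposition \ref{pro:GibbsPropimpliesLeakage}, the associated Radon--Nikodym derivative is positive a.e., so a positive-measure piece of $B_N$ is carried onto a positive-measure subset of $W^-(\omega)$; hence $\mu(W^-(\omega)) > 0$.

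For the converse, suppose $\mu(W^-(\omega)) > 0$. Since the DWDE is irreducible, $\tilde{\beta}$ is a single closed communication class with $\pi_2(\tilde{\beta}) = \Z$, so Theorem \ref{cor:NoSemibdd} applies: for $\mu$-a.e.\ $(x,i)$ the orbit is either recurrent, with $\limsup_n U_{i,n} = +\infty$ (case (i)), or satisfies $\lim_n |U_{i,n}| = \infty$ (case (ii)). The recurrent case is incompatible with $\mu(W^-(\omega)) > 0$, because then a.e.\ orbit crosses $\Lambda_0$ infinitely often, whereas every point of $W^-(\omega)$ never returns to $\Lambda_0$; so case (ii) must hold. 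But for $x \in W^-(\omega)$ we have already shown $\liminf_n U_{i,n} = -\infty$, which together with $\lim_n |U_{i,n}| = \infty$ forces $U_{i,n} \to -\infty$. Thus a.e.\ point of $W^-(\omega)$ lies in $L^-(\omega)$, giving $\mu(L^-(\omega)) \geq \mu(W^-(\omega)) > 0$ and hence $\omega \in D^-$.

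The main obstacle I anticipate is the forward-measure step in the first implication: passing from $\mu(B_N) > 0$ to $\mu(T_f^N(B_N)) > 0$ must be done cylinder-by-cylinder, invoking positivity of the Radon--Nikodym derivatives furnished by the Strong Distortion Property, rather than a naive appeal to non-singularity, which controls preimages and not images. The geometric crossing argument — that uniformly bounded jumps force an orbit to traverse $\Lambda_0$ in order to change sign — is the structural fact that drives both directions, and care is needed to upgrade $\liminf = -\infty$ to the genuine limit $\lim = -\infty$ via the dichotomy of Theorem \ref{cor:NoSemibdd}.
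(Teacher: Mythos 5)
Your proof is correct and follows essentially the same route as the paper's: both directions rest on the last-visit-to-$\Lambda_0$ decomposition (equivalently, the inclusion $\{(x,i)\in\Lambda_0:\lim_n U_{i,n}=-\infty\}\subseteq\bigcup_{j\geq 0}T_f^{-j}\bigl(\bigcap_{k\geq 1}({}_0A_{0,-k})\bigr)$) together with Theorem \ref{cor:NoSemibdd} for the converse. The one place you diverge is the forward implication: you run it directly, pushing $B_N$ forward under $T_f^N$ and invoking the distortion inequality of Proposition \ref{pro:GibbsPropimpliesLeakage} cylinder-by-cylinder to obtain a positive-measure image inside $W^-(\omega)$ --- which does work, but is precisely the technical obstacle you flag. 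The paper instead argues the contrapositive: if $\mu(W^-(\omega))=0$, then $\mu(\bigcup_{j\geq 0}T_f^{-j}W^-(\omega))=0$ by non-singularity (which controls preimages of null sets, exactly what is needed here), and the displayed inclusion finishes the proof. That reformulation buys you everything you were worried about: no forward images, no distortion estimate, and no measurability concerns about $T_f^N(B_N)$. Your treatment of the converse is a faithful (and usefully more detailed) expansion of the paper's one-line appeal to Theorem \ref{cor:NoSemibdd}, and your geometric observation that uniformly bounded jumps force any sign change of the walk to pass through the width-$M$ block $\Lambda_0$ is the right justification both for the inclusion above and for upgrading $\liminf_n U_{i,n}=-\infty$ to $\lim_n U_{i,n}=-\infty$ in case (ii).
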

\begin{proof}
We prove (a) as the proof of (b) is similar. The ($\Leftarrow$) part follows directly from Theorem \ref{cor:NoSemibdd}. For the $(\Rightarrow)$ part, assume that $\mu(\cap_{k \geq 1} (_{0}A_{0,-k}(\omega))) = 0$. Since $T_f$ is non-singular with respect to $\mu$, it follows that 
\[
\mu(\bigcup_{j \geq 0}T_f^{-j}(\cap_{k \geq 1} (_{0}A_{0,-k}(\omega)))) = 0. 
\]
But 
\[
\{(x, i) \in \Lambda_0: \lim_{n \rightarrow \infty} U_{i,n}(x, \omega) = - \infty\} \subset \bigcup_{j \geq 0}T_f^{-j}(\cap_{k \geq 1} (_{0}A_{0,-k}(\omega))).
\]
This completes the proof.
\end{proof}

\begin{lem}\label{lem:NoDiv-inf} 
Suppose that the DWDE satisfies the Linkage Property and that it has uniformly bounded jumps. Then $E^+ \cap D^- = \emptyset$.
\end{lem}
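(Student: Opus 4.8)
The plan is to prove the contrapositive: I will show that if $\omega\in E^+$ and the Linkage Property holds at $\omega$, then $\omega\notin D^-$. By Lemma \ref{pro:DecompD+}(a) this is the same as showing $\mu(G^-)=0$, where $G^-:=\bigcap_{k\ge 1}{}_0A_{0,-k}(\omega)$ is the set of walks that start in $\Lambda_0$ and descend to $-\infty$ without ever returning to $\Lambda_0$. Fix $p>0$ witnessing $\omega\in E^+$ and let $\mathcal K:=\{k\ge 1:\mu({}_{-k}A_{-k,0}(\omega))\ge p\}$, which is infinite. The engine is the geometric-decay (renewal) scheme already used in Lemma \ref{lem:Trans_iozero}: I will produce a single $\delta>0$ such that, for each $k\in\mathcal K$, a fraction at least $\delta$ of the walks that first reach $\Lambda_{-k}$ before returning to $\Lambda_0$ are forced back to $\Lambda_0$ before they can descend to $\Lambda_{-k-1}$.

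First I would set up the renewal. Write $P_k:=\mu({}_0A_{0,-k}(\omega))$. Since the jumps are uniformly bounded by $M$ and each block $\Lambda_j$ has width $M$, the induced block process is nearest-neighbour, so $\Lambda_{-k-1}$ cannot be reached without first passing through $\Lambda_{-k}$; hence ${}_0A_{0,-k-1}\subseteq {}_0A_{0,-k}$ and, as $\mu({}_0A_{0,-1})\le\mu(\Lambda_0)=M<\infty$, continuity from above gives $P_k\searrow\mu(G^-)$. Decomposing ${}_0A_{0,-k}$ by the first-passage cylinder $v$ into $\Lambda_{-k}$ and applying the Gibbs inequality \eqref{eqn:Distortion} (valid for $T_f$ by Proposition \ref{pro:T_fisMGF}), I note $T_f^{|v|}v=Y_v\times\{n_v\}$ with $Y_v$ a union of partition elements and $\mu(Y_v\times\{n_v\})\ge\ep$ by \eqref{eqn:bigimage1}; consequently the conditional probability that the continuation crosses rightward to $\Lambda_0$ before reaching $\Lambda_{-k-1}$ is bounded below by $C^{-1}$ times the relative $\mu$-mass, within the column $Y_v\times\{n_v\}$, of the rightward-crossing set (which contains ${}_{-k}A_{-k,0}(\omega)$). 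Establishing $P_{k+1}\le(1-\delta)P_k$ for $k\in\mathcal K$ then forces $P_k\to 0$ along $\mathcal K$ (using that $P_k$ is decreasing), whence $\mu(G^-)=0$.

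The crux — and the step I expect to be the main obstacle — is the uniform lower bound $\delta$ on this conditional crossing probability, uniform over $k\in\mathcal K$ and over the first-passage configuration. The hypothesis controls only the \emph{total} crossing mass $\mu({}_{-k}A_{-k,0}(\omega))\ge p$ spread over all of $\Lambda_{-k}$, whereas a given entry column $Y_v\times\{n_v\}$ may meet ${}_{-k}A_{-k,0}(\omega)$ in measure zero; when $\#\beta=\infty$ this mass may be distributed among infinitely many starting cylinders, so no single cylinder need carry a uniform share. This is exactly where the Linkage Property is indispensable: because ${}_{-k}A_{-k,0}(\omega)$ lies entirely in the single block $\Lambda_{-k}$, any starting cylinder of a rightward crossing sits within $\Z$-distance $M$ of the full partition element $a^{\ast}\times\{n_v\}\subseteq Y_v\times\{n_v\}$ supplied by the Big Image bound, so Proposition \ref{pro:Linkage} furnishes a bridge of $\mu$-measure at least $r_M$ joining the entry configuration to the crossing, with $r_M$ independent of $k$. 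Concatenating the bridge with the crossing, and bounding the measure of the concatenation below by a product of the three pieces exactly as in the proof of Proposition \ref{pro:Linkage}, transfers the flux $p$ to the entry column and yields a uniform $\delta>0$; some care is needed to route the bridge so that it reaches $\Lambda_0$ without first descending to $\Lambda_{-k-1}$, which is why keeping both endpoints inside the one block $\Lambda_{-k}$ matters.

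With the uniform $\delta$ in hand the renewal estimate gives $\mu(G^-)=0$, so by Lemma \ref{pro:DecompD+}(a) $\omega\notin D^-$, establishing $E^+\cap D^-=\emptyset$. In the write-up it is probably cleanest to dispatch $\#\beta<\infty$ first, where the finitely many entry columns make the uniform bound immediate (some entry cylinder carries at least a fixed share of the flux $p$), and then treat $\#\beta=\infty$ by the Linkage bridging just described.
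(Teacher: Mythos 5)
Your overall architecture matches the paper's: reduce via Lemma \ref{pro:DecompD+}(a) to showing $\mu\bigl(\bigcap_{k\ge 1}{}_0A_{0,-k}(\omega)\bigr)=0$, and obtain this from a renewal estimate in which a uniform fraction $\delta$ of leftward first-passage trajectories is diverted back to $\Lambda_0$ by splicing a Linkage bridge onto the rightward-crossing set of mass at least $p$. The summation issue you would also have to face (adding the product lower bounds over the possibly infinitely many starting cylinders $a_t$ of the crossing without double counting) is resolved in the paper by indexing trajectories according to the last partition element of $\Lambda_{-n_j}$ visited before the crossing, which makes the spliced sets pairwise disjoint; your sketch is compatible with that device.

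The genuine gap is in the step you yourself flag as the crux: routing the bridge. You run the renewal block by block, asserting $P_{k+1}\le(1-\delta)P_k$ for $k\in\mathcal K$, which requires the spliced trajectory (bridge followed by crossing) to avoid $\Lambda_{-k-1}$. Keeping both endpoints of the bridge inside $\Lambda_{-k}$ does not achieve this: Definition \ref{def:Linkage} and Proposition \ref{pro:Linkage} supply a cylinder $c$ with $\mu(c)\ge r_M$ making $a\cdot c\cdot b$ admissible, but say nothing about the intermediate states of $c$, and since $\Lambda_{-k-1}$ is adjacent to $\Lambda_{-k}$ a single jump (a rank-one cylinder, which may have large measure) can carry the bridge into $\Lambda_{-k-1}$. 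Such trajectories lie in ${}_0A_{0,-k-1}$ and contribute nothing to $P_k-P_{k+1}$, so no uniform $\delta$ follows at this scale; indeed the adjacent-block renewal inequality itself is not forced by the hypotheses. The paper's proof exists precisely to circumvent this: using the standing hypothesis of Section \ref{Sec:Zero_One} that $\beta$ separates points (which you never invoke), it chooses $k_0$ so that every cylinder of rank greater than $k_0$ has measure less than $r$ as in (\ref{eqn:r_L_upper}), passes to a subsequence $(n_j)$ with $n_{j+1}-n_j-1>k_0$ as in (\ref{eqn:N_0}), and runs the renewal between consecutive subsequence blocks: the taboo block $\Lambda_{-n_{j+1}}$ is then so far from $\Lambda_{-n_j}$ that any cylinder reaching it has rank exceeding $k_0$ and hence measure below $r$, which the measure-$\ge r$ bridge cannot have, by (\ref{eqn:Sup_r_L}). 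This yields $\mu({}_0A_{0,-n_{j+1}})\le(1-\delta)\mu({}_0A_{0,-n_j})$, which still suffices because $\mu({}_0A_{0,-k})$ is non-increasing in $k$. Without this sparse-subsequence, small-cylinder mechanism your uniform $\delta$ is not established, and that is the essential missing idea.
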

\begin{proof}
We show that for each $\omega \in E^+$ there exists a strictly increasing sequence $(n_j)_{j \geq 1}$ of natural numbers, and $q \in (0, 1)$, such that 
\begin{equation}\label{eqn:expdecay}
\mu(_{0}A_{0,-n_j}(\omega)) \leq Mq^j.
\end{equation} 
The result then follows from Lemma \ref{pro:DecompD+}(a). 

For $d \geq 1$, define 
\[
E_d^+ = \{\omega:~ \mu(_{-n}A_{-n,0}(\omega)) \geq d^{-1} ~\mathrm{for~infinitely~many}~n \geq 1\}.
\]
By (\ref{eq:E+}) $E^+ = \cup_{d \geq 1} E_d^+$. For the remainder of the proof we fix $d \geq 1$ and fix $\omega \in E^+_d$, and write $_jA_{i,k}$ instead of $_jA_{i,k}(\omega)$. 

Since $\omega \in E_d^+$ there exists an infinite, strictly increasing, sequence of natural numbers $(n_j)_{j \geq 1}$ such that for each $j \geq 1$
\begin{equation}\label{eqn:keyIneq}
\mu(_{-n_j} A_{-n_j, 0}) \geq d^{-1}.
\end{equation}
Since the Markov partition $\beta$ separates points, it follows that for all $\ep > 0$ there exists $n \geq 1$ such that for all $a \in \beta_n$, $m(a) < \ep$. To see this, suppose to the contrary that there exists $\ep > 0$ such that for all $n \geq 1$ there exists $a \in \beta_n$ such that $m(a) > \ep$. It follows that there exists an infinite sequence of partition elements $\{a_n: a_n \in \beta, n \geq 1\}$ such that $m(\bigcap_{n \geq 1} a_n) > \ep$, and so there exists a positive measure set of points that are not separated by the partition $\beta$, contradicting the assumption that $\beta$ separates points.

By extension, the partition $\tilde{\beta}$ separates points in $X \times \Z$, and from this it is immediate that for all $\ep > 0$ there exists $n \geq 1$ such that $\mu(c) < \ep$ for all $c \in \tilde{\beta}_n$. It follows from Proposition \ref{pro:Linkage} that there exists $r > 0$ such that for all $a, b \in \tilde{\beta}$ for which $|\pi_2(a) - \pi_2(b)| \leq M$, there exists a cylinder set $c$ such that $\mu(c) > r$ and $a \cdot c \cdot b$ is admissible. Let $k_0$ be such that for all $n \geq k_0$ and all $a \in \tilde{\beta}_n$
\begin{equation}\label{eqn:r_L_upper}
\mu(a) < r.
\end{equation}
Since $(n_j)_{j \geq 1}$ is a strictly increasing sequence we may suppose, after passing to a subsequence if necessary, that for all $j \geq 1$
\begin{equation}\label{eqn:N_0}
n_{j+1} - n_j - 1 > k_0.
\end{equation}
It follows from (\ref{eqn:N_0}) that for all cylinders $c$ such that 
\[
c \subseteq ~ _{-n_j}A_{-n_j, -n_{j+1}}~\mathrm{or}~ c \subseteq ~ _{-n_j}A_{-n_j, -n_{j-1}}, 
\]
we have
\begin{equation}\label{eqn:Sup_r_L}
\mu(c) < r. 
\end{equation}
\paragraph{Claim} There exists $\delta > 0$ such that for all $j \geq 1$, and for each cylinder $b$ satisfying
\begin{equation}\label{eqn:b}
b \subset ~ _0A_{0, -n_j} ~ \& ~ T_f^{|b|}b \subset \Lambda_{-n_j},
\end{equation}
we have
\begin{equation}\label{ineqn:decayb}
\mu(b \cap T_f^{-|b|}(_{-n_{j+1}}A_{-n_j, 0})) \geq  \delta \mu(b).
\end{equation}

Assuming the above claim we complete the proof as follows. Taking unions over all cylinders $b$ satisfying (\ref{eqn:b}), it follows from (\ref{ineqn:decayb}) that
\begin{equation}\label{eqn1}
\mu(_{0}A_{0, -n_j} \backslash _{0}A_{0, -n_{j+1}}) \geq \delta \mu(_{0}A_{0, -n_j}).
\end{equation}
Since $_{0}A_{0, -n_{j+1}} \subset ~_{0}A_{0, -n_j}$, from (\ref{eqn1}) we have
\begin{equation}\label{eqn2}
\mu(_{0}A_{0, -n_{j+1}}) \leq (1 - \delta)\mu(_{0}A_{0, -n_j}).
\end{equation}
Taking $q = 1 - \delta \in (0, 1)$, establishes (\ref{eqn:expdecay}).

\paragraph{Proof of Claim}
Fix $j \geq 1$. Let $\{a_n\}_{n \geq 1}$ be an enumeration of all partition elements $a \in \tilde{\beta}$ such that $a \subset \Lambda_{-n_j}$. We first show that there exists $\gamma > 0$ such that for all $a \in \{a_n\}_{n \geq 1}$
\begin{equation}\label{eqn:gamma}
\mu(a \cap ~ _{-n_{j+1}}A_{-n_j, 0}) \geq \gamma \mu(a).
\end{equation}
Fix $a \in \{a_n\}_{n \geq 1}$. By the Linkage Property and Proposition \ref{pro:Linkage}, it follows that for each $t \in \N$, there exists a cylinder $D_t := [d_0, \ldots, d_{k-1}]$, such that $d_i \neq a ~\mathrm{or}~a_t$ for $i = 0, \ldots, k-1$, $a \cdot D_t \cdot a_t$ is admissible, and 
\begin{equation}\label{eqn:LinkLower}
\mu(D_t) > r.
\end{equation}
For $t \in \N$ define 
\begin{equation}\label{eqn:DefBt}
B_t : = a_t \cap ~ _{-n_j}A_{-n_j, 0},
\end{equation} 
and 
\begin{equation}\label{eqn:DefPt}
p_t := \frac{\mu(B_t)}{\mu(_{-n_j}A_{-n_j, 0})}.
\end{equation}
Since the partition element $a \subset \Lambda_{-n_j}$ and $T_f(a) \supset D_t$, it follows that the cylinder 
\[
D_t \subset \Lambda_{-n_j-1} \cup \Lambda_{-n_j} \cup \Lambda_{-n_j+1}.
\]
Furthermore, from (\ref{eqn:Sup_r_L}) and (\ref{eqn:LinkLower}) it follows that for all $i = 0, \ldots, k-1$, 
\[
d_i \cap (\Lambda_{-n_{j-1}} \cup \Lambda_{-n_{j+1}}) = \emptyset
\]
and hence for all $x \in a \cdot D_t$, 
\begin{equation}\label{eqn:NoVisitto0}
T_f^i(x) \notin \Lambda_{-n_{j-1}} \cup \Lambda_{-n_{j+1}} ~\mathrm{for}~ i = 0, \ldots, k.
\end{equation}
Thus, for all $t \geq 1$
\begin{equation}\label{eqn:Returnto-nj}
a \cdot D_t \cdot a_t \subset ~ _{-n_{j+1}}A_{-n_j, -n_j} \bigcap ~_{-n_{j-1}}A_{-n_j, -n_j} \subset~ _{-n_{j+1}}A_{-n_j, -n_j} \bigcap ~_{0}A_{-n_j, -n_j}.
\end{equation} 
From (\ref{eqn:DefBt}) and (\ref{eqn:Returnto-nj}) we have
\begin{equation}\label{eqn:Escapebackto0}
a \cdot D_t \cdot B_t \subset ~ _{-n_{j+1}}A_{-n_j, 0}.
\end{equation} 
Moreover, it follows from (\ref{eqn:NoVisitto0}) and (\ref{eqn:Escapebackto0}) that $a \cdot D_t \cdot B_t$ consists of points in $\Lambda_{-n_j}$ that start in $a$, and whose last visit to $\Lambda_{-n_j}$ before eventually visiting $\Lambda_0$ occurs in the partition element $a_t$. It follows that for all $t \neq s$
\begin{equation}\label{Disjoint}
a \cdot D_t \cdot B_t \cap a \cdot D_s \cdot B_s = \emptyset.
\end{equation}
It follows from Proposition \ref{pro:T_fisMGF}, Proposition \ref{pro:GibbsPropimpliesLeakage} and equation (\ref{eqn:bddimage}) that for all $t \geq 1$
\begin{equation}\label{ineq:Long1}
\frac{\mu(a \cdot D_t \cdot B_t)}{\mu(a)} \geq C^{-1} \frac{\mu(T_f(a) \cap D_t \cdot B_t)}{\mu(T_f(a))} \geq C^{-1}\mu(T_f(a) \cap D_t \cdot B_t).
\end{equation}
Since $T_f(a) \supset D_t$ we have
\begin{equation}\label{eq:n}
\mu(T_f(a) \cap D_t \cdot B_t) = \mu(D_t \cdot B_t).
\end{equation}
From $T^{|D_t|}D_t \supset a_t \supset B_t$ it follows from Proposition \ref{pro:T_fisMGF}, Proposition \ref{pro:GibbsPropimpliesLeakage} and equation (\ref{eqn:bddimage})
\begin{equation}\label{ineq:Long2}
\frac{\mu(D_t \cdot B_t)}{\mu(D_t)} \geq C^{-1}\frac{\mu(B_t)}{\mu(T_f^{|D_t|}D_t)} \geq C^{-1}\mu(B_t).
\end{equation}
It now follows from (\ref{ineq:Long1}), (\ref{eq:n}) and (\ref{ineq:Long2}) that
\begin{equation}\label{eq:z}
\frac{\mu(a \cdot D_t \cdot B_t)}{\mu(a)} \geq C^{-2} \mu(D_t) \mu(B_t)
\end{equation}
From (\ref{eqn:LinkLower}), (\ref{eqn:DefPt}) and (\ref{eq:z}) we obtain
\begin{equation}\label{ineq:Key1}
\frac{\mu(a \cdot D_t \cdot B_t)}{\mu(a)} \geq C^{-2} r p_t \mu(_{-n_j}A_{-n_j, 0}) \geq C^{-2}r p_t d^{-1} > 0.
\end{equation}
It follows from (\ref{Disjoint}) that $\mu(\bigcup_{t \in \N}(a \cdot D_t \cdot B_t)) = \sum_{t \in \N} \mu(a \cdot D_t \cdot B_t)$.
Taking unions over $t \in \N$, it follows from (\ref{ineq:Key1}) that
\begin{equation}\label{ineq:KeyIneq2}
\mu(\bigcup_{t \in \N}(a \cdot D_t \cdot B_t)) \geq \mu(a) \sum_{t \in \N}C^{-2} r p_t d^{-1} = (C^{-2}rd^{-1})\mu(a) > 0.
\end{equation}
Taking $\gamma := C^{-2}rd^{-1} > 0$ it follows from (\ref{eqn:Escapebackto0}) that
\begin{equation}
\mu(a \cap _{-n_{j+1}}A_{-n_j, 0}) \geq \mu(\bigcup_{t \in \N}(a \cdot D_t \cdot B_t)) \geq \gamma \mu(a).
\end{equation}
This establishes (\ref{eqn:gamma}).

The set $_0A_{0,-n_j}$ can be expressed as the union of cylinders satisfying (\ref{eqn:b}). We fix such a cylinder $b$. Since $T_f^{|b|}b$ is the union of partition elements in $\tilde{\beta}$, it is immediate from (\ref{eqn:gamma}) that 
\begin{equation}\label{eqn:Tb}
\mu(T_f^{|b|}b \cap _{-n_{j+1}}A_{-n_j, 0}) \geq \gamma \mu(T_f^{|b|}b).
\end{equation}
From Proposition \ref{pro:T_fisMGF}, Proposition \ref{pro:GibbsPropimpliesLeakage} and (\ref{eqn:Tb}) we have that
\begin{equation}\label{ineqn:Leakfromb}
\frac{\mu(b \cap T_f^{-|b|} ~ (_{-n_{j+1}}A_{-n_j, 0}))}{\mu(b)} \geq C^{-1} \frac{\mu(T_f^{|b|}b \cap _{-n_{j+1}}A_{-n_j, 0})}{\mu(T_f^{|b|}b)} \geq C^{-1}\gamma.
\end{equation}
Letting $\delta := C^{-1}\gamma > 0$ establishes (\ref{ineqn:decayb}), thereby proving the claim. 
\end{proof}

\begin{rmk}
In the special case where $T$ is full-branch and for all $g \in G$, $g(X) = \{-1, +1\}$, then the conclusion to Lemma \ref{lem:NoDiv-inf} stills holds when the assumption that the DWDE has the Linkage Property is replaced with the weaker assumption that it is irreducible.
\end{rmk}

\begin{lem}\label{lem:PD+PE+}
If the DWDE is irreducible with uniformly bounded jumps and a stationary environment, then $P(D^+) = 1$ implies that $P(E^+) > 0$.
\end{lem}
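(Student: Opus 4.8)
The plan is to exploit the translation covariance of the DWDE together with Poincar\'e recurrence for the environment shift. The key structural observation is that shifting both the starting position and the environment leaves the walk unchanged: from $U_{i,n}(x,\omega) = U_{0,n}(x,\eta^i\omega)+i$ (equation (\ref{eq:itin0k})) one deduces, for every integer $c$, that $U_{i,n}(x,\omega) = U_{i-c,n}(x,\eta^c\omega)+c$. Taking $c=-kM$ shows that the position shift $(x,i)\mapsto (x,i+kM)$ carries the orbit of $(x,i)$ under $T_{f(\omega)}$ bijectively onto the orbit of $(x,i+kM)$ under $T_{f(\eta^{-kM}\omega)}$, translated up by $kM$ in the $\Z$-coordinate; in particular $\Lambda_{-k}$ is sent to $\Lambda_0$ and $\Lambda_0$ to $\Lambda_k$. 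Since this position shift preserves $\mu$ (counting measure is translation invariant) and the trajectory correspondence is exact, the taboo-hitting-time set ${}_{-k}A_{-k,0}(\omega)$ is carried exactly onto ${}_0A_{0,k}(\eta^{-kM}\omega)$ (start, target, and taboo levels all transform correctly), whence
\[
\mu(_{-k}A_{-k,0}(\omega)) = \mu(_0A_{0,k}(\eta^{-kM}\omega)) \qquad \text{for all } k\geq 1.
\]
This identity reduces the statement about $E^+$ (motion up from level $-k$) to motion up from level $0$ in a shifted environment, the target level $k$ matching the number $k$ of shifts $\eta^{-M}$.

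Next I would use the hypothesis $P(D^+)=1$. Since the DWDE is irreducible with uniformly bounded jumps, Lemma \ref{pro:DecompD+}(b) identifies $D^+$ with $\{\omega:\rho(\omega)>0\}$, where $\rho(\omega):=\mu(\cap_{k\geq1}(_0A_{0,k}(\omega)))$; thus $\rho>0$ for $P$-a.e.\ $\omega$. Because $\cap_{k\geq1}(_0A_{0,k}(\omega))\subseteq {}_0A_{0,k}(\omega)$ for every $k$, monotonicity of $\mu$ yields the crucial uniform bound $\mu(_0A_{0,k}(\omega))\geq\rho(\omega)$ for all $k\geq1$ and $P$-a.e.\ $\omega$. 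Setting $G_p:=\{\omega:\rho(\omega)\geq p\}$, the sets $G_p$ increase to $\{\rho>0\}$ as $p\downarrow0$, so $P(G_p)\to1$; in particular I can fix $p>0$ with $P(G_p)>0$. (Measurability of $\rho$, and hence of $G_p$, follows from the machinery behind Corollary \ref{cor:MeasR}, as $\rho$ is a monotone limit of the functions $\omega\mapsto\mu(_0A_{0,k}(\omega))$.)

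The final step is a recurrence argument for the map $\eta^{-M}$, which is well defined and $P$-preserving because the environment is stationary and $\eta$ is invertible. By Poincar\'e recurrence applied to $G_p$, for $P$-a.e.\ $\omega\in G_p$ there are infinitely many $k\geq1$ with $\eta^{-kM}\omega\in G_p$. For each such $k$ we have $\rho(\eta^{-kM}\omega)\geq p$, hence $\mu(_0A_{0,k}(\eta^{-kM}\omega))\geq p$, and by the covariance identity $\mu(_{-k}A_{-k,0}(\omega))\geq p$. Thus $\omega\in E^+$, and therefore $P(E^+)\geq P(G_p)>0$, as required.

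I expect the main obstacle to be the careful verification of the covariance identity, and above all the bookkeeping ensuring that the target level $k$ equals the number of environment shifts $\eta^{-kM}$, so that the Poincar\'e return times line up exactly with the target levels appearing in $E^+$. Once that alignment is secured, the two remaining ingredients, namely monotonicity to pass from $\rho$ to the individual sets ${}_0A_{0,k}$ and a single application of Poincar\'e recurrence, are routine.
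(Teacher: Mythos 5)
Your argument is correct, and its skeleton matches the paper's: both proofs rest on Lemma \ref{pro:DecompD+}(b) to get a uniform-in-$k$ lower bound on $\mu({}_0A_{0,k}(\omega))$ on a positive-measure set of environments, and both then use stationarity, via the covariance identity $\mu({}_{-k}A_{-k,0}(\omega)) = \mu({}_0A_{0,k}(\eta^{-kM}\omega))$, to convert this into the bound defining $E^+$. The differences are in the two outer steps. First, the paper extracts a single constant $c$ with $P(D(k,c)) \geq c^{-1}$ for every $k$, where $D(k,c) := \{\omega: \mu({}_0A_{0,k}(\omega)) \geq c^{-1}\}$, and finishes with continuity from above: the sets $\bigcup_{N \geq k} E(N,c)$ decrease in $k$ and each has measure at least $c^{-1}$, so their intersection (which lies in $E^+$) has measure at least $c^{-1}$. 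You instead fix a level set $G_p$ of $\rho(\omega) := \mu(\bigcap_k {}_0A_{0,k}(\omega))$ and apply Poincar\'e recurrence to $\eta^{-M}$, concluding that almost every $\omega \in G_p$ itself lies in $E^+$; this is a slightly different finishing mechanism (recurrence of one fixed set rather than a $\limsup$ of varying sets) and yields the marginally stronger statement $P(E^+) \geq P(G_p)$, which can be made arbitrarily close to $1$. Second, you make fully explicit the translation-covariance bookkeeping (the shift by $kM$ in position matching $kM$ applications of $\eta^{-1}$, so that $\Lambda_{-k} \mapsto \Lambda_0$ and $\Lambda_0 \mapsto \Lambda_k$) that the paper compresses into the phrase ``the $\eta$-invariance of $P$''; spelling this out is the right instinct, since it is the one step where an error in alignment between target levels and environment shifts could invalidate the whole argument. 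Both routes are equally rigorous and of comparable length.
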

\begin{proof} 
By Lemma \ref{pro:DecompD+}(b), $P(D^+) = 1$ implies that for some $c \geq 1$, 
\begin{equation}\label{eqn:Aux1}
P(\{\omega: \mu(\cap_{k \geq 1}~{(_{0}}A_{0,k}(\omega))) \geq c^{-1}\}) \geq c^{-1}.
\end{equation}
For $k, c \geq 1$, define the sets $D(k, c) := \{\omega: \mu(_{0}A_{0,k}(\omega)) \geq c^{-1}\}$. It follows immediately from (\ref{eqn:Aux1}) that if $P(D^+) = 1$ then there exists $c \geq 1$ such that for all $k \geq 1$
\begin{equation}\label{eqn:PcapD}
P(D(k, c)) \geq c^{-1}.
\end{equation}
For $k, c \geq 1$ define the sets $E(k, c) := \{\omega: \mu(_{-k}A_{-k,0}(\omega)) \geq c^{-1} \}$. By Corollary \ref{cor:MeasR} the sets $D(k,c)$ are $P$-measurable. It follows from (\ref{eqn:PcapD}) and the $\eta$-invariance of $P$ that for all $k \geq 1$, 
\begin{equation}\label{eqn:PEkd}
P(E(k, c)) \geq c^{-1}.
\end{equation}
From (\ref{eqn:PEkd}), it is immediate that for all $k \in \N$
\[
P(\cup_{N \geq k} E(N, c)) \geq c^{-1}
\]
and that
\[
P(\cap_{k \geq 1} \cup_{N \geq k} E(N, c)) \geq c^{-1}.
\]
Since $E^+ \supset \cap_{k \geq 1} \cup_{N \geq k} E(N, c)$, it follows that $P(E^+) \geq c^{-1} > 0$.
\end{proof}

We have the following corollary of Theorem \ref{thm:nosplitcase}.
\begin{cor}\label{cor:symmetricRecurrence}
Suppose that the DWDE satisfies the hypotheses of Theorem \ref{thm:nosplitcase}, and that the deterministic environment is i.i.d.\ such that 
\begin{equation}\label{eq:Symmetry}
P(f_i = g) = P(f_i = -g),
\end{equation}
for all transition functions $g \in G$ and all $i \in \Z$. Then the DWDE is transitive on $\Z$.
\end{cor}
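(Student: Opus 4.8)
The plan is to use the symmetry hypothesis to rule out the two transient alternatives left open by Theorem \ref{thm:nosplitcase}, thereby forcing recurrence, and then to deduce transitivity from irreducibility. First I would record the reduction: since the Linkage Property includes irreducibility, for $P$-a.e.\ $\omega$ the partition $\tilde{\beta} = \beta \times \Z$ is a single communication class, and trivially $\pi_2(\tilde{\beta}) = \Z$. Hence if the DWDE is recurrent, Lemma \ref{cor:Transitivity} makes this class transitive and Theorem \ref{thm:Transience_TransitivityofOrbits}(i) shows that for a.e.\ $(x,i)$ the orbit $(U_{i,n}(x))_{n\ge 0}$ visits every state in $\Z$ infinitely often; in particular $m(\{x : U_{i,n}(x,\omega)=j \text{ for some } n\ge 1\})=1$ for all $i,j\in\Z$, i.e.\ the walk is transitive on $\Z$. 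So it suffices to prove that the DWDE is recurrent. By Theorem \ref{thm:nosplitcase}, a transient DWDE must satisfy case (i) or case (ii) of Theorem \ref{thm:4Case0-1Law}; equivalently, either $P(D^+)=1,\ P(D^-)=0$ or $P(D^+)=0,\ P(D^-)=1$. I will contradict both by proving $P(D^+)=P(D^-)$.

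The engine of the argument is a sign-reflection of the environment. Using that the environment is i.i.d., I would realise $(\phi\circ\eta^i)_{i\in\Z}$ as the coordinate process on the canonical space $(G^{\Z}, \nu^{\otimes\Z})$, where $\nu$ is the common law of a transition function and $\phi(\eta^i\omega)=\omega_i$. Define $\sigma:G^{\Z}\to G^{\Z}$ by $(\sigma\omega)_k := -\omega_{-k}$, the composition of the coordinate reflection $k\mapsto -k$ with the pointwise sign change $g\mapsto -g$ (that $-g\in G$ is implicit in (\ref{eq:Symmetry})). The coordinate reflection preserves the product measure, and the symmetry hypothesis (\ref{eq:Symmetry}) says exactly that $\nu$ is invariant under $g\mapsto -g$; hence $\sigma$ is measure-preserving.

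Next I would conjugate the walk by $\sigma$. A direct induction on the recursion (\ref{eq:DWDE2}) shows that the reflected walk $V_n := -U_{i,n}(x,\omega)$ satisfies $V_{n+1}=V_n+(\sigma\omega)_{V_n}(T^n x)$ with $V_0=-i$, so that
\[
U_{-i,n}(x,\sigma\omega) = -U_{i,n}(x,\omega) \quad \text{for all } n\ge 0,\ x\in X.
\]
Taking $i=0$, the sets $\{x:\lim_n U_{0,n}(x,\omega)=+\infty\}$ and $\{x:\lim_n U_{0,n}(x,\sigma\omega)=-\infty\}$ coincide, and hence have equal $m$-measure. Using the identities $D^{\pm}=(R^{\pm})^c=\{\omega : m(\{x:\lim_n U_{0,n}(x,\omega)=\pm\infty\})>0\}$, which follow from the proof of Theorem \ref{thm:4Case0-1Law} (in particular from (\ref{eq:limk0}) together with irreducibility), this yields $\omega\in D^+ \iff \sigma\omega\in D^-$. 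Since $\sigma$ preserves $P$, we conclude $P(D^+)=P(D^-)$, contradicting both transient cases. Therefore the DWDE is recurrent, and by the reduction it is transitive on $\Z$ for $P$-a.e.\ $\omega$.

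The hard part will be the second paragraph: justifying that the sign-reflection is a bona fide measure-preserving transformation of the environment process and that the divergence events transform correctly. This rests on passing to the canonical i.i.d.\ representation, on checking that (\ref{eq:Symmetry}) is equivalent to invariance of the marginal $\nu$ under $g\mapsto -g$ (taking care that $-g\in G$), and on matching the $\Lambda_0$-based definition of $D^{\pm}$ with the single-state characterization $D^{\pm}=(R^{\pm})^c$ supplied by the zero-one argument around (\ref{eq:limk0}).
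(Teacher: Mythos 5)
Your proposal is correct and follows essentially the same route as the paper: use the symmetry hypothesis to deduce $P(R^+)=P(R^-)$ (equivalently $P(D^+)=P(D^-)$), which together with Theorems \ref{thm:4Case0-1Law} and \ref{thm:nosplitcase} rules out both transient alternatives and forces $P(R^+)=P(R^-)=1$, whence transitivity. The only difference is one of detail: the paper asserts $P(R^+)=P(R^-)$ in a single sentence, whereas you supply the justification explicitly via the measure-preserving reflection $(\sigma\omega)_k=-\omega_{-k}$ and the conjugation identity $U_{-i,n}(x,\sigma\omega)=-U_{i,n}(x,\omega)$, which is a correct and worthwhile elaboration of the step the paper leaves implicit.
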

\begin{proof}
It follows from the symmetry of (\ref{eq:Symmetry}) that $P(R^+) = P(R^-)$. By Theorems \ref{thm:4Case0-1Law} and \ref{thm:nosplitcase}, it follows that $P(R^+) = P(R^-) = 1$, and hence the DWDE is transitive on $\Z$.
\end{proof}

\section{Transient DWDEs on $\Z$}\label{Sec:Transience}

In this section we establish hypotheses under which the DWDE is transient and, moreover, for which it is possible to determine whether case (i) or (ii) of Theorem \ref{thm:nosplitcase} holds. 

In addition to the standing hypotheses, we assume that $T$ is a Gibbs-Markov transformation, from which it automatically follows that $T$ has the Strong Distortion and Big Image properties. It can be shown (see \cite{AD2001}) that Gibbs-Markov maps have the property that there exists $C \geq 1$ such that for all $n \geq 1$, all $a \in \beta_n$ and all $x \in a$
\begin{equation}\label{eq:Gibbscylinder}
C^{-1}g(x)g(Tx)\cdots g(T^{n-1}x) \leq m(a) \leq Cg(x)g(Tx)\cdots g(T^{n-1}x),
\end{equation}
where $g := \frac{dm}{dm \circ T}$. The measure $m$ is known as a \emph{Gibbs} measure for the transformation $T$, and the function $-\ln g$ is called the \emph{Gibbs potential} of the measure $m$.

\begin{thm}\label{thm:TransientDWDE}
Suppose that the DWDE has an ergodic, stationary environment, and is driven by a full-branch Gibbs-Markov transformation $T$ of a probability space $(X, m)$, with finite Markov partition $\beta$ and Gibbs potential $h$. Suppose further that for some $1 \leq r \leq \#\beta - 1$, and all transition functions $f \in G$, 
\[
\#\{a \in \beta: f(a) = +1\} = r ~\quad \&~ \quad \#\{a \in \beta: f(a) = -1\} = \#\beta - r.
\]
Then:
\begin{itemize}
\item[(i)] If $\inf h > \frac{1}{2} \ln 4r(\#\beta - r)$ and $r > \frac{\#\beta}{2}$ then
\[
\lim_{n \rightarrow \infty} U_{i,n}(x, \omega) = +\infty, \mathrm{for~all}~i \in \Z,~ ~P-a.e.\ \omega ~\& ~ m-a.e.\ x. 
\]
\item[(ii)] If $\inf h > \frac{1}{2} \ln 4r(\#\beta - r)$ and $r < \frac{\#\beta}{2}$ then
\[
\lim_{n \rightarrow \infty} U_{i,n}(x, \omega) = -\infty, \mathrm{for~all}~i \in \Z,~ ~P-a.e.\ \omega ~\& ~ m-a.e.\ x.
\]
\end{itemize}
\end{thm}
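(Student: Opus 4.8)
The strategy is to reduce the theorem to an application of Theorem~\ref{thm:nosplitcase}. The hypotheses of that theorem (Linkage Property, uniformly bounded jumps, ergodic stationary environment) are satisfied here: jumps are $\pm 1$ so certainly uniformly bounded; the environment is assumed ergodic and stationary; and since $T$ is full-branch with $2 \leq \#\beta < \infty$ and each $f \in G$ takes values in $\{+1, -1\}$, Example~\ref{ex:Linkage} furnishes the Linkage Property. By Theorem~\ref{thm:nosplitcase}, once I show the DWDE is transient, the walk must diverge to $+\infty$ almost surely or to $-\infty$ almost surely (case (i) or (ii) of Theorem~\ref{thm:4Case0-1Law}), with no possibility of case~(iii). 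Thus the real work splits into two pieces: first, establish transience; second, pin down the \emph{direction} of divergence using the asymmetry $r \gtrless \#\beta/2$.

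\textbf{Establishing transience via a Gibbs-measure estimate.} For transience I would estimate the measure of a single cylinder and couple it to the combinatorics of the jumps. Since $T$ is full-branch with Gibbs potential $h$, the bound~(\ref{eq:Gibbscylinder}) gives $m(a) \leq C e^{-\sum_{j=0}^{n-1} h(T^j x)} \leq C e^{-n \inf h}$ for every $a \in \beta_n$ and $x \in a$. The condition $\inf h > \tfrac{1}{2}\ln 4r(\#\beta - r)$ is equivalent to $e^{-\inf h} < \big(4r(\#\beta - r)\big)^{-1/2}$, i.e. $e^{-2\inf h} \cdot 4r(\#\beta-r) < 1$. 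The plan is to bound the $\mu$-measure of the set of walks returning to a fixed fibre level (say level $0$) after $n$ steps by summing over admissible cylinders. Each such return path of length $n$ corresponds to a choice of jump signs summing to zero, and the number of sign-patterns of length $n$ returning to the origin is controlled by a binomial-type count; weighting each $n$-cylinder by $m(a) \leq C e^{-n \inf h}$ and using that at each step there are at most $r$ choices giving $+1$ and $\#\beta - r$ giving $-1$, the total measure of length-$n$ returns is dominated by $C \big(e^{-2\inf h} 4r(\#\beta - r)\big)^{n/2}$ (the $4$ absorbing the central binomial coefficient asymptotics $\binom{n}{n/2} \sim 2^n$, which contributes a factor $2^n = 4^{n/2}$). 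Summing a convergent geometric series and invoking Borel--Cantelli (or directly Lemma~\ref{lem:Trans_iozero}) then yields that the walk returns to $0$ only finitely often, which is transience.

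\textbf{Determining the direction.} Granted transience, Theorem~\ref{thm:nosplitcase} forces either $U_{i,n} \to +\infty$ or $U_{i,n} \to -\infty$, uniformly in the almost-sure sense. To decide which, I would compare the ``drift'' encoded by $r$. Consider the first-coordinate displacement after one step: under the Gibbs measure $m$, the expected jump from a fixed level is $\sum_{a: f(a)=+1} m(a) - \sum_{a: f(a)=-1} m(a)$. When $T$ is full-branch the $m$-measures of the $\#\beta$ branches sum to $1$, and a symmetry/comparison argument (exchanging the roles of $+1$ and $-1$ branches, i.e. comparing the environment with its reflection) shows that the case $r > \#\beta/2$ is strictly favourable to rightward motion. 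The cleanest route is probably a coupling or monotonicity argument: since the two alternatives $+\infty$ and $-\infty$ are mutually exclusive under Theorem~\ref{thm:nosplitcase}, it suffices to rule out divergence to $-\infty$ when $r > \#\beta/2$, which I would do by showing $P(R^-) = 1$ is incompatible with the rightward bias — for instance by producing a strictly positive probability of never descending below the starting level, again via the Gibbs cylinder estimate applied to the ``staircase'' of predominantly $+1$ jumps. Part~(ii) follows by the reflection $f \mapsto -f$, which swaps $r \leftrightarrow \#\beta - r$ and $+\infty \leftrightarrow -\infty$.

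\textbf{Main obstacle.} I expect the combinatorial measure estimate in the transience step to be the delicate point: one must track the interaction between the Gibbs weights $m(a) \leq C e^{-n\inf h}$ and the number of admissible return cylinders, and show that the central-binomial growth $\binom{n}{n/2} \approx 2^n$ together with the per-step branch counts produces exactly the threshold $4r(\#\beta - r)$ appearing in the hypothesis. Getting the constant $4r(\#\beta-r)$ to emerge cleanly — rather than a crude overcount — is where the condition $\inf h > \tfrac12 \ln 4r(\#\beta-r)$ must be used sharply, and this is the crux of the argument; the direction-of-divergence step, by contrast, should follow relatively softly from the zero-one structure already provided by Theorem~\ref{thm:nosplitcase}.
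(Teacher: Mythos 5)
Your reduction to Theorem \ref{thm:nosplitcase} and your transience argument are essentially the paper's: the Linkage Property does come from Example \ref{ex:Linkage}, and transience follows by counting the $\binom{2n}{n}r^n(\#\beta-r)^n$ cylinders of rank $2n+1$ realising a return to level $0$, each of measure at most $CM^{2n+1}$ with $M=e^{-\inf h}$, and summing the geometric series $\sum_n(4r(\#\beta-r)M^2)^n$. (The paper runs this through Lemma \ref{cor:Aaro} with an auxiliary finite weighted measure $\nu$ rather than a bare Borel--Cantelli argument, but the content is the same computation you describe.)

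The gap is in the direction step, which you expect to ``follow relatively softly''; in the paper it is where the real work happens, and your concrete suggestion does not go through as stated. Producing ``a strictly positive probability of never descending below the starting level via the Gibbs cylinder estimate'' requires showing $m(\{x:\exists n,\ U_{0,n}(x)<0\})<1$; but the Gibbs bound $m(a)\leq CM^n$ carries an uncontrolled constant $C\geq 1$, so summing a convergent Catalan-weighted series over first-passage cylinders only bounds this measure by some finite number, not by a number strictly less than $1$. (Your phrase ``$P(R^-)=1$ is incompatible with the rightward bias'' is also backwards: $P(R^-)=1$ is the conclusion you want, not the hypothesis to refute.) The paper's device is to aim at a \emph{limit} statement, which is immune to multiplicative constants: by Lemma \ref{pro:DecompD+}(a) it suffices to show $m({}_0A_{0,-k})\to 0$ as $k\to\infty$, where ${}_0A_{0,-k}$ is the set of walks reaching $-k$ before returning to $0$. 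Cylinder counting gives $m({}_0A_{0,-k})\leq C\,((\#\beta-r)M)^k\sum_n c_{n,k}\,(r(\#\beta-r)M^2)^n$ with the same first-passage coefficients $c_{n,k}$ as a genuine nearest-neighbour walk; choosing an integer $R$ with $r(\#\beta-r)M^2<R(R-1)/(2R-1)^2$ and $(\#\beta-r)M<(R-1)/(2R-1)$, this upper bound is identified (up to the constant $C$) with the corresponding first-passage probability of an explicit rightward-biased simple random walk, realised as a full-branch deterministic walk, and that probability tends to $0$. Without this comparison, or some substitute that likewise avoids the constant problem, your argument establishes transience but not the direction of divergence, so the proof of parts (i) and (ii) is incomplete.
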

The hypotheses of Theorem \ref{thm:TransientDWDE} are a special case of those of Theorem \ref{thm:nosplitcase}. In particular, as observed in Example \ref{ex:Linkage}, since $T$ is full-branch with a finite Markov partition $\beta$, and since $f(X) = \{+1, -1\}$ for all transition functions $f \in G$, the DWDE has the Linkage Property.

The proof of the Theorem \ref{thm:TransientDWDE} requires the following lemma, which derives from Theorem 4.4.3 in \cite{Aaronson1997}. (The routine details of this derivation can be found in \cite[Corollary 8.3]{Little2012a}.)

\begin{lem}\label{cor:Aaro}
Let $S$ be an irreducible Markov transformation of a probability space $(Y, q)$ that has the Strong Distortion Property. Then for any positive measure set $A$, if $S$ is recurrent (in the sense of Definition \ref{def:MarkovRecTrans}) then
\begin{equation}\label{eq:divcons1}
\sum_{j=0}^{\infty}q(A \cap S^{-j}A) = \infty,
\end{equation}
whereas if $S$ is transient (in the sense of Definition \ref{def:MarkovRecTrans}) then
\begin{equation}\label{eq:divdiss1}
\sum_{j=0}^{\infty}q(A \cap S^{-j}A) < \infty.
\end{equation}
\end{lem}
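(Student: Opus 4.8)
The plan is to prove the dichotomy by identifying the paper's notions of recurrence and transience of partition elements with conservativity and dissipativity (in the sense of the Hopf decomposition) of the single nonsingular transformation $S$, and then to read off the divergence or convergence of the return sums from the Markov-map machinery of \cite[Theorem 4.4.3]{Aaronson1997}, whose distortion hypothesis is supplied by the Strong Distortion Property. First I would record the global dichotomy: since $S$ is irreducible, $\beta$ is a single communication class, so by Lemma \ref{cor:Transience} either every partition element is recurrent or every partition element is transient. It therefore makes sense to call $S$ itself recurrent or transient, and I would argue that recurrence of the partition elements is equivalent to $S$ being conservative, and transience to $S$ being dissipative. (The cleanest route is via the first-return map to a fixed partition element: a partition element is recurrent exactly when this induced map is defined a.e.\ and conservative, and irreducibility together with Lemma \ref{cor:Transitivity} spreads this over all of $Y$.) Throughout I would exploit the identity
\[
\sum_{j=0}^{\infty} q(A \cap S^{-j}A) = \int_A \#\{\,j \geq 0 : S^j x \in A\,\}\, dq,
\]
which turns the statement into a question about the integrability of the visit-count function on $A$.

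For the recurrent case the argument is soft and carries no constants. Having established that $S$ is conservative, I would invoke the Halmos recurrence theorem: for every set $A$ of positive measure, almost every $x \in A$ satisfies $S^j x \in A$ for infinitely many $j$. Hence the visit-count is infinite a.e.\ on $A$, and since $q(A) > 0$ the integral above diverges, giving (\ref{eq:divcons1}). This is the abstract counterpart of Lemma \ref{lem:Recimp_ioRec}, applied not to a partition element but to an arbitrary positive-measure set, the upgrade being exactly what conservativity provides.

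For the transient case the conclusion is the corresponding half of \cite[Theorem 4.4.3]{Aaronson1997}, and here Strong Distortion does essential work. The mechanism is already visible for a single partition element $a$: the geometric decay established inside the proof of Lemma \ref{lem:Trans_iozero}, namely $q(E_{r+1}) \leq (1-\delta) q(E_r)$ for the sets $E_r$ of points making at least $r$ returns to $a$, yields at once
\[
\sum_{j=1}^{\infty} q(a \cap S^{-j}a) = \sum_{r \geq 1} q(E_r) \leq \frac{q(a)}{\delta} < \infty,
\]
so (\ref{eq:divdiss1}) holds when $A$ is a cylinder. The substance of the transient direction is to pass from this to an arbitrary positive-measure $A$, which may meet infinitely many partition elements. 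I would do this through the transfer operator $\hat S$ of $S$ with respect to $q$, writing $q(A \cap S^{-j}A) = \int_A \hat S^{\,j}\mathbf 1_A\, dq$: dissipativity gives $\sum_j \hat S^{\,j}\mathbf 1 < \infty$ a.e., and the bounded-distortion inequality (\ref{eqn:Distortion}) of Proposition \ref{pro:GibbsPropimpliesLeakage} makes $\hat S^{\,j}\mathbf 1_A$ comparable, cylinder by cylinder, to the corresponding return ratios, so that the cylinderwise geometric estimates can be summed and integrated. This upgrade from pointwise finiteness to integrability is precisely what Aaronson's theorem packages, and the verification that our hypotheses (irreducibility, Strong Distortion) meet its assumptions is the routine content indicated in \cite[Corollary 8.3]{Little2012a}.

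I expect the main obstacle to be exactly this last point: establishing (\ref{eq:divdiss1}) for a general set $A$ rather than a single cylinder. For cylinders everything is concrete and follows from the decay already proved for Lemma \ref{lem:Trans_iozero}. The difficulty is that a general $A$ spread over infinitely many partition elements can be visited finitely often along each orbit (as Lemma \ref{lem:Trans_iozero} guarantees) and yet still fail to have an integrable visit-count in the absence of a uniform, distortion-controlled bound. Matching the two descriptions of the Green sum — the visit-count form and the transfer-operator form — and using (\ref{eqn:Distortion}) to make the bound uniform across cylinders is the step that must be carried out with care.
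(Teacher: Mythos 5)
Your overall architecture is the same as the paper's: the paper gives no self-contained proof, deriving the lemma from \cite[Theorem 4.4.3]{Aaronson1997} with the routine details relegated to \cite[Corollary 8.3]{Little2012a}, and your reduction --- recurrence of the partition elements $=$ conservativity, transience $=$ dissipativity, the Halmos recurrence theorem for (\ref{eq:divcons1}), and the geometric decay $q(E_{r+1}) \leq (1-\delta)q(E_r)$ from the proof of Lemma \ref{lem:Trans_iozero} for (\ref{eq:divdiss1}) on cylinders --- is exactly the intended derivation. The recurrent half is sound as you present it, modulo the step that recurrence of partition elements implies conservativity of $S$; that step is itself a distortion argument (approximating an arbitrary positive-measure set by cylinders and using (\ref{eqn:Distortion}) to show orbits that enter a cylinder infinitely often must enter a high-density subset of it), but it is part of what the cited theorem supplies, and your identity $\sum_{j} q(A \cap S^{-j}A) = \int_A \#\{j \geq 0: S^jx \in A\}\, dq$ plus Halmos then yields divergence for every positive-measure $A$. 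Your cylinder computation $\sum_{j \geq 1} q(a \cap S^{-j}a) = \sum_{r \geq 1} q(E_r) \leq q(a)/\delta$ is also correct, and it is all the paper ever needs: Theorem \ref{thm:TransientDWDE} applies the lemma only with $A$ a single partition element $a \times \{0\}$.

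The obstacle you flag in the transient half is, however, not merely a delicate step: the statement (\ref{eq:divdiss1}) for an \emph{arbitrary} positive-measure set $A$ is false, so no transfer-operator or distortion argument can close the gap you leave open. Take $A = Y$: since $S^{-j}Y = Y$, every term $q(A \cap S^{-j}A)$ equals $1$ and the sum diverges regardless of whether $S$ is totally dissipative. The same point shows why your proposed upgrade cannot work: $\int_Y \sum_j \hat S^{\,j}\mathbf{1}\, dq = \sum_j \int_Y \hat S^{\,j}\mathbf{1}\, dq = \infty$ always, because $\hat S$ preserves integrals, so the pointwise finiteness of $\sum_j \hat S^{\,j}\mathbf{1}$ granted by dissipativity never improves to integrability on sets of large measure, no matter how good the distortion is. Aaronson's Theorem 4.4.3 is formulated for cylinder sets, not for arbitrary measurable sets of a probability space; the transient half of the lemma should be read, and can only be proved, with $A$ restricted to (finite unions of) cylinders --- at which point your geometric-decay estimate \emph{is} the whole proof, and this restriction is consistent with the lemma's sole application in the paper. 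Deferring the general-$A$ case to ``what Aaronson's theorem packages'' is therefore misplaced: the theorem does not assert it, and the counterexample above shows nothing could.
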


\begin{defn}
Fix $\theta \in (0,1)$. Let $\nu$ denote the weighted product measure on $X \times \Z$ such that for all $m$-measurable sets $A$ and all $i \in \Z$,
\begin{equation}\label{weightedmes}
\nu(A \times \{i\}) = \theta^{|i|}m(A)\cdot(\sum_{i=-\infty}^{\infty}\theta^{|i|})^{-1}. 
\end{equation} 
\end{defn}
Clearly, $\nu$ is a probability measure on $X \times \Z$. Moreover, we have the following proposition, the routine details of whose proof can be found in \cite[Proposition 8.4]{Little2012a}.
\begin{pro}\label{pro:NuDist}
For all $\omega \in \Omega$, the skew-product transformation $T_{f(\omega)}$ of the measure space $(X \times \Z, \nu)$ has the Strong Distortion Property.
\end{pro}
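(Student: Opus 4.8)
The plan is to deduce the Strong Distortion Property of $T_{f(\omega)}$ with respect to $\nu$ directly from the already-established Strong Distortion Property with respect to $\mu$ (Proposition \ref{pro:T_fisMGF}), by exploiting the fact that $\nu$ and $\mu$ differ only by a constant factor along each fibre. Writing $Z := \sum_{i \in \Z} \theta^{|i|}$, the defining relation (\ref{weightedmes}) gives $\nu(A \times \{i\}) = Z^{-1}\theta^{|i|} m(A) = Z^{-1}\theta^{|i|}\mu(A \times \{i\})$, so the density $\frac{d\nu}{d\mu}$ equals the \emph{constant} $Z^{-1}\theta^{|i|}$ on the fibre $X \times \{i\}$. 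Since $\theta \in (0,1)$, these weights are strictly positive, so $\nu$ and $\mu$ are mutually absolutely continuous; hence $T_{f(\omega)}$ remains Markov and non-singular with respect to $\nu$, with the same Markov partition $\tilde{\beta}$ and the same admissible cylinders (by Proposition \ref{pro:SkewMarkov}), and only the distortion bound needs to be checked. The crucial structural input, recorded in (\ref{eq:vn_A}), is that every cylinder $v \in \tilde{\beta}_n$ has the form $v = a \times \{i\}$ for some $a \in \beta_n$ and is carried by $T_f^n$ onto $T_f^n v = T^n a \times \{k\}$, a single fibre $\{k\}$. Thus along both $v$ and its image the fibre index, and hence the reweighting density, is constant.

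First I would compute, for a fixed cylinder $v = a \times \{i\} \in \tilde{\beta}_n$ with $T_f^n v = T^n a \times \{k\}$, the $\nu$-Radon--Nikodym derivative $w'_v := \frac{d(\nu \circ T_f^{-n})}{d\nu}$ on $T_f^n v$, and compare it with the corresponding $\mu$-derivative $v'_v := \frac{d(\mu \circ T_f^{-n})}{d\mu}$ supplied by Proposition \ref{pro:T_fisMGF}. Testing against sets $B = B_0 \times \{k\}$ with $B_0 \subseteq T^n a$, and using that $\nu$ restricts to $Z^{-1}\theta^{|k|} m$ on $\{k\}$ and to $Z^{-1}\theta^{|i|} m$ on $\{i\}$, one obtains
\begin{align*}
\int_{B_0} w'_v(\cdot,k)\, Z^{-1}\theta^{|k|}\, dm
&= \nu(v \cap T_f^{-n} B) = Z^{-1}\theta^{|i|}\, m(a \cap T^{-n}B_0) \\
&= Z^{-1}\theta^{|i|}\int_{B_0} v'_v(\cdot,k)\, dm.
\end{align*}
As $B_0 \subseteq T^n a$ is arbitrary, this yields the pointwise identity $w'_v(\cdot,k) = \theta^{|i|-|k|}\, v'_v(\cdot,k)$ for $m$-a.e.\ point of $T^n a$.

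The decisive observation is that the factor $\theta^{|i|-|k|}$ depends only on the (fixed) fibre indices $i$ and $k$ attached to $v$, and not at all on the point at which the density is evaluated. Consequently, for $\nu$-a.e.\ pair $(x,k),(y,k) \in T_f^n v$,
\[
\frac{w'_v(x,k)}{w'_v(y,k)} = \frac{\theta^{|i|-|k|}\, v'_v(x,k)}{\theta^{|i|-|k|}\, v'_v(y,k)} = \frac{v'_v(x,k)}{v'_v(y,k)} \leq D,
\]
where $D$ is the distortion constant furnished by Proposition \ref{pro:T_fisMGF}. Hence $T_{f(\omega)}$ has the Strong Distortion Property with respect to $\nu$, with the \emph{same} constant $D$, for every $\omega$. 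The argument is essentially bookkeeping; the only point requiring genuine care — and the one I would present most explicitly — is the verification that the reweighting density $\frac{d\nu}{d\mu}$ is constant along each cylinder and along its image, which is precisely what (\ref{eq:vn_A}) guarantees and which is exactly what forces the fibre-dependent weights to cancel in the distortion ratio.
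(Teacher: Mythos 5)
Your proposal is correct and follows what is essentially the paper's own route: the paper declares the proof routine and defers it to \cite[Proposition 8.4]{Little2012a}, and the routine verification is precisely your reduction to Proposition \ref{pro:T_fisMGF} via the observation that $\frac{d\nu}{d\mu}$ is the constant $Z^{-1}\theta^{|i|}$ on each fibre while, by (\ref{eq:vn_A}), every cylinder $v \in \tilde{\beta}_n$ and its image $T_f^n v$ each lie in a single fibre, so the weights cancel in the ratio $\frac{w'_v(x,k)}{w'_v(y,k)}$ and the constant $D$ is inherited unchanged. Your handling of the one point needing care is also right: since $T_f^n v = T^n a \times \{k\}$ sits entirely in the fibre $\{k\}$, testing against sets $B_0 \times \{k\}$ with $B_0 \subseteq T^n a$ determines $w'_v$ $\nu$-a.e.\ on $T_f^n v$, and mutual absolute continuity of $\nu$ and $\mu$ (as $\theta \in (0,1)$ gives strictly positive weights) carries over non-singularity and the Markov structure of Proposition \ref{pro:SkewMarkov}.
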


\begin{proof}{\textbf{of Theorem \ref{thm:TransientDWDE}}} We prove part (i) of the theorem as the proof of part (ii) is similar. In particular, we fix an arbitrary environment $(f_i)_{i \in \Z}$ satisfying the hypotheses of Theorem \ref{thm:TransientDWDE} and show that (a) the skew product transformation $T_{f}: (X \times \Z, \nu) \rightarrow (X \times \Z, \nu)$ is transient, and (b) that 
\begin{equation}\label{eq:Lim0}
\lim_{k \rightarrow \infty}m(_0A_{0,-k}) = 0,
\end{equation}
where for $k \geq 1$ define
\[
_0A^n_{0,-k} := \{x \in X: U_{0,n}(x) = -k,~\mathrm{and}~-k < U_{0,j}(x) < 0,~\mathrm{for}~1 \leq j \leq n-1\},
\]
and define $_0A_{0,-k} := \cup_{n=1}^{\infty}~ {_0}A^n_{0,-k}$. The result then follows from Theorem \ref{thm:nosplitcase}.

We first show that the deterministic walk in the environment $(f_i)_{i \in \Z}$ is transient. Fix a partition element $a \times \{0\} \in \tilde{\beta}$. Since $\nu$ is a finite measure on $X \times \Z$ and $\nu$ is equivalent to $\mu$, by Lemma \ref{cor:Aaro} and Proposition \ref{pro:NuDist}, it suffices to show that
\begin{equation}\label{eq:convergence}
\sum_{n=0}^{\infty} \nu(a \times \{0\} \cap T_f^{-n}(a \times \{0\})) < \infty.
\end{equation}
Since the deterministic walk is restricted to jumps of +1 and -1, it can only return to the partition element $a \times \{0\}$ in an even number of steps. From (\ref{weightedmes}) it follows that for all $2n+1$-cylinders that start and end in $a \times \{0\}$ we have
\begin{equation}\label{ineq:nu<mu}
\nu([a \times \{0\}, a_1 \times \{k_1\}, \ldots, a_{2n-1} \times \{k_{2n-1}\}, a \times \{0\}]) \leq
 m([a, a_1, \ldots, a_{2n-1}, a]).
\end{equation}
Given that we return to $a \times \{0\}$ at time $2n$, for each $n \geq 1$, there are $\left(\begin{matrix}2n\\n
\end{matrix}\right)$ numbers of ways choosing the timing of $n$ leftward and $n$ rightward jumps. Since $T$ is full branch, and since every transition function takes the value $+1$ on exactly $r$ elements of $\beta$, and the value $-1$ on the remaining $\#\beta - r$ elements of $\beta$, it follows that there are $r^{n}(\#\beta - r)^n\textstyle\left(\begin{matrix}2n\\n
\end{matrix}\right)$ cylinder sets of rank $2n+1$ that start and end in the partition element $a \times \{0\}$. 

Recalling the Radon-Nikodym derivative $g = \frac{dm}{d(m \circ T)}$, and defining 
\[
M := \sup g = e^{- \inf h}
\]
it follows from (\ref{eq:Gibbscylinder}) that there exists $C > 0$ such that for all $n \geq 1$,
\begin{equation}\label{eq:SupnCyl}
\sup_{a \in \beta_n} m(a) \leq CM^n.
\end{equation}
It follows from (\ref{ineq:nu<mu}) and (\ref{eq:SupnCyl})
\begin{equation}\label{eq:1stApp}
\sum_{n=0}^{\infty} \nu(a \times \{0\} \cap T_f^{-n}(a \times \{0\})) \leq C\sum_{n=0}^{\infty} r^{n}(\#\beta - r)^n\textstyle\left(\begin{matrix}2n\\n
\end{matrix}\right) M^{2n+1}.
\end{equation}
By Stirling's formula it follows that for large $n$
\begin{equation}\label{eq:Stirling}
\textstyle\left(\begin{matrix}2n\\n
\end{matrix}\right) \approx (2\pi n	)^{-\frac{1}{2}}4^n.
\end{equation}
It follows from (\ref{eq:1stApp}) and (\ref{eq:Stirling}) that for some constant $C' > 0$
\begin{equation}\label{eq:Est}
\sum_{n=0}^{\infty} \nu(a \times \{0\} \cap T_f^{-n}(a \times \{0\})) \leq C'	\sum_{n=0}^{\infty} (4r(\#\beta - r)M^2)^n.
\end{equation}
It follows from the hypotheses that $M^2 = e^{- 2 \inf h} < (4r(\#\beta - r))^{-1}$ and so 
\begin{equation}\label{eq:EstM}
4r(\#\beta - r)M^2 < 1.
\end{equation}
From (\ref{eq:EstM}) it follows that the right hand side of (\ref{eq:Est}) converges as required. This establishes that the deterministic walk is transient.

We now establish (\ref{eq:Lim0}). Given that the deterministic walk can only take jumps of +1 and -1, let $c_{n,k}$ denote the number of ways of choosing the timing $n+k$ leftward jumps and $n$ rightward jumps so that the first time the walk visits state $-k$ is at time $2n+k$, and that at no time before does it return to state $0$. It follows from the hypotheses that there are $c_{n,k}r^n(\#\beta - r)^{n+k}$ cylinder sets $a \in \beta_{2n+k}$ such that $a \subset ~_0A^{2n+k}_{0,-k}$. From (\ref{eq:SupnCyl}) we obtain 
\begin{gather}
\begin{aligned}\label{0A0-k_Est}
m({_0}A_{0,-k}) & \leq  C\sum_{n=0}^{\infty} c_{n,k}r^n(\#\beta - r)^{n+k}M^{2n+k}\\
                & =  C(\#\beta - r)^kM^k\sum_{n=0}^{\infty} c_{n,k}(r(\#\beta - r)M^2)^n.
\end{aligned}
\end{gather}
It follows from (\ref{eq:EstM}) that
\begin{equation}\label{eq:2R-1_b}
\textstyle r(\#\beta - r)M^2 < \frac{1}{4} 
\end{equation}
and since $r > \#\beta - r$ we also have that
\begin{equation}\label{eq:R-1_b}
\textstyle  (\#\beta - r)M < \frac{1}{2}.
\end{equation}
From (\ref{eq:2R-1_b}) and (\ref{eq:R-1_b}), there exists a positive integer $R$ such that 
\begin{equation}\label{eq:2R-1}
\textstyle r(\#\beta - r)M^2 < \frac{R(R-1)}{(2R-1)^2} <  \frac{1}{4} 
\end{equation}
and
\begin{equation}\label{eq:R-1}
\textstyle  (\#\beta - r)M < \frac{R-1}{2R-1} < \frac{1}{2}.
\end{equation}
It follows from (\ref{0A0-k_Est}), (\ref{eq:2R-1}), (\ref{eq:R-1}) that
\begin{equation}\label{LimZero_1}
m(_0A_{0,-k}) \leq C \left(\frac{R-1}{2R-1}\right)^k\sum_{n=0}^{\infty}c_{n,k} \left(\frac{R(R-1)}{(2R-1)^2}\right)^n.
\end{equation}
Consider the deterministic walk on $\Z$ defined by 
\begin{equation}\label{eq:HomogSRW}
V_n(x) := \sum_{j=0}^{n-1}f \circ T^j(x)
\end{equation}
where the transformation $T: ([0,1], \lambda) \rightarrow ([0,1], \lambda)$ is defined by
\[
Tx := (2R-1)x ~(\mathrm{mod}~1),
\]
and 
\begin{displaymath}
f(x) := \left\{ \begin{array}{ll} 
+1 & \mathrm{if}~x \in [0, \frac{R}{2R-1})\\ 
-1 & \mathrm{if}~x \in [\frac{R}{2R-1}, 1].
\end{array} \right.
\end{displaymath}
Clearly, $T$ is a piecewise linear and full-branch Markov map, with Markov partition $\mathcal{P}$, consisting of $2R-1$ intervals of equal length. Also the transition function $f$ satisfies
\begin{equation}\label{gProp}
\#\{a \in \mathcal{P}: f(a) = +1\} = R \quad \mathrm{and} \quad \#\{a \in \mathcal{P}: f(a) = -1\} = R-1.
\end{equation}
It is straightforward to show that the deterministic walk $V_n$ satisfying (\ref{eq:HomogSRW}) and (\ref{gProp}) is a model of a simple random walk whose probability of a leftward jump at any given time is $\frac{R-1}{2R-1}$, and whose probability of a rightward jump at any given time is $\frac{R}{2R-1}$. (For the routine details see \cite[Proposition 3.5]{Little2012a}.) It is immediate that this process diverges to the right with probability 1, and hence that 
\begin{equation}\label{eq:LimAk0}
\lim_{k \rightarrow \infty}\lambda({_0}B_{0,-k}) = 0,
\end{equation}
where $\lambda$ denotes Lebesgue measure and
\[
{_0}B_{0,-k} := \{x \in [0,1]: V_n(x) = -k,~\mathrm{and}~-k < V_j(x) < 0,~\mathrm{for}~1 \leq j \leq n-1\}.
\]
For all $n \geq 1$ and all $a \in \mathcal{P}_n$, $\lambda(a) = \left(\frac{1}{2R-1}\right)^n$, and it follows that
\begin{equation}\label{LimZero}
\lambda({_0}B_{0,-k}) = \left(\frac{R-1}{2R-1}\right)^k\sum_{n=0}^{\infty}c_{n,k} \left(\frac{R(R-1)}{(2R-1)^2}\right)^n. 
\end{equation}
Equation (\ref{eq:Lim0}) now follows from (\ref{LimZero_1}), (\ref{eq:LimAk0}) and (\ref{LimZero}). This completes the proof.
\end{proof}

\begin{examp}
Consider a DWDE for which the base transformation is a full-branch Gibbs-Markov map $T: ([0,1], \lambda) \rightarrow ([0,1], \lambda)$, with Markov partition $\beta := \{\textstyle[0,\frac{1}{3}), [\frac{1}{3}, \frac{2}{3}), [\frac{2}{3}, 1]\}$, such that $\lambda$ has Gibbs potential $h: [0,1] \rightarrow \R$ and $\inf h > \frac{1}{2}\ln 8$. Suppose the deterministic environment is ergodic and stationary and that the transition functions $f \in G$ satisfy $\#\{a \in \beta: f(a) = +1\} = 2$ and $\#\{a \in \beta: f(a) = -1\} = 1$. It follows from Theorem \ref{thm:TransientDWDE} that $\lim_{n \rightarrow \infty} U_{i,n}(x, \omega) = +\infty$ for all $i \in \Z$, $P$-a.e.\ $\omega$ and $m$-a.e.\ $x$. It is clear that the fastest rate at which measures of cylinders can decay is $\frac{1}{3}$. Theorem \ref{thm:TransientDWDE} says that as long as the measures of cylinders decay at a rate that is faster than $\frac{1}{\sqrt{8}}$ then the DWDE will still diverge to the right.
\end{examp}

\paragraph{Acknowledgements} 
I am greatly indebted to Ian Melbourne for suggesting \emph{deterministically driven random walks in a random environment} as a topic of research, and for his advice during the development this work. I would also like to thank the EPSRC for funding this research.

\end{document}